\documentclass[reqno]{amsart}
\usepackage{amssymb}
\usepackage{amsmath}
\usepackage{hyperref}
\hypersetup{ colorlinks = true, urlcolor = blue, linkcolor = blue, citecolor = red }
\usepackage{xcolor}
%for reference in enumerate
\usepackage{enumitem}
\usepackage{esint}
\makeatletter
\def\namedlabel#1#2{\begingroup
	#2%
	\def\@currentlabel{#2}%
	\phantomsection\label{#1}\endgroup
}
%for item refer
\usepackage{varwidth}
\usepackage{tasks}
%for centering enumerate items
\pdfstringdefDisableCommands{\def\eqref#1{(\ref{#1})}}%for \subsection{\eqref}
%%%%%%%%%%%%%%%%%%%%%%%%%%%%%%%%%%%%%%%%%%%%%%%%%%%%%%%%%%%%%%%%%%%%%%%%
%%%%%%%%%%%%%%%%%%%%%%%%%%%%%%%%%%%%%%%%%%%%%%%%%%%%%%%%%%%%%%%%%%%%%%%
\DeclareMathOperator{\dv}{div}
\DeclareMathOperator{\loc}{loc}

\newcommand{\RR}{\mathbb{R}}
\newcommand{\mA}{\mathcal{A}}
\newcommand{\Om}{\Omega}
\newcommand{\na}{\nabla}
\newcommand{\pa}{\partial}
\newcommand{\La}{\Lambda}
\newcommand{\la}{\lambda}
\newcommand{\ep}{\epsilon}
\newcommand{\om}{\omega}
\newcommand{\lao}{\lambda_\om}
\newcommand{\data}{\mathit{data}}
%%%%%%%%%%%%%%%%%%%%%%%%%%%%%%%%%%%%%%%%%%%%%%%%%%%%%%%%%%%%%%%%%%%%%%%%%%%%%%%%%%%%%%%%%%%%%%%%%%%%%%%%%%%%%%%%%%%%%%%%%%%%%%%%%%%%%%%%%%%%%%%%
\usepackage[T1]{fontenc}
\usepackage[utf8]{inputenc}

\makeatletter

\makeatother
%%%%%%%%%%%%%%%%%%%%%%%%%%%%%%%%%%%%%%%%%%%%%%%%%%%%%%%%%%%%%%%%%%%%%%%%%%%%%%%%

\theoremstyle{plain}
\newtheorem{theorem}{Theorem}[section]
\newtheorem{lemma}[theorem]{Lemma}
\newtheorem{corollary}[theorem]{Corollary}

\newtheorem{definition}[theorem]{Definition}

\newtheorem{assumption}[theorem]{Assumption}
\newtheorem{remark}[theorem]{Remark}
\def\Xint#1{\mathchoice
	{\XXint\displaystyle\textstyle{#1}}%
	{\XXint\textstyle\scriptstyle{#1}}%
	{\XXint\scriptstyle\scriptscriptstyle{#1}}%
	{\XXint\scriptstyle\scriptscriptstyle{#1}}%
	\!\int}
\def\XXint#1#2#3{{\setbox0=\hbox{$#1{#2#3}{\int}$}
		\vcenter{\hbox{$#2#3$}}\kern-.5\wd0}}

\def\Yint#1{\mathchoice
	{\YYint\displaystyle\textstyle{#1}}%
	{\YYint\textstyle\scriptstyle{#1}}%
	{\YYint\scriptstyle\scriptscriptstyle{#1}}%
	{\YYint\scriptscriptstyle\scriptscriptstyle{#1}}%
	\!\iint}
\def\YYint#1#2#3{{\setbox0=\hbox{$#1{#2#3}{\iint}$}
		\vcenter{\hbox{$#2#3$}}\kern-.51\wd0}}
\def\longdash{{-}\mkern-3.5mu{-}} 
% consider using "\mkern-7.5mu" if esint package is loaded

\def\fiint{\Yint\longdash}

\def\Xint#1{\mathchoice
	{\XXint\displaystyle\textstyle{#1}}%
	{\XXint\textstyle\scriptstyle{#1}}%
	{\XXint\scriptstyle\scriptscriptstyle{#1}}%
	{\XXint\scriptscriptstyle\scriptscriptstyle{#1}}%
	\!\int}
\def\XXint#1#2#3{{\setbox0=\hbox{$#1{#2#3}{\int}$ }
		\vcenter{\hbox{$#2#3$ }}\kern-.6\wd0}}

\def\dashint{\Xint-}

\DeclareMathOperator{\diam}{diam}

%%%%%%%%%%%%%%%%%%%%%%%%%%%%%%%%%%
\usepackage{nameref}
\makeatletter
\let\orgdescriptionlabel\descriptionlabel
\renewcommand*{\descriptionlabel}[1]{%
	\let\orglabel\label
	\let\label\@gobble
	\phantomsection
	\edef\@currentlabel{#1}%
	\let\label\orglabel
	\orgdescriptionlabel{#1}%
}
\makeatother
\numberwithin{equation}{section}
%%%%%%%%%%%%%%%%%%%%%%%%%%%%%%%%%%%%%%%%%%%%%%%%%%%%%%%%%%%%%%%%%%%%%%%%%%%%%%%%%%%%%%%%%%%%%%%%%%%%%%%%%%%%%%%%%%%%%%
\def\Xint#1{\mathchoice
    {\XXint\displaystyle\textstyle{#1}}%
    {\XXint\textstyle\scriptstyle{#1}}%
    {\XXint\scriptstyle\scriptscriptstyle{#1}}%
    {\XXint\scriptscriptstyle\scriptscriptstyle{#1}}%
    \!\int}
\def\XXint#1#2#3{\setbox0=\hbox{$#1{#2#3}{\int}$}
    \vcenter{\hbox{$#2#3$}}\kern-0.5\wd0}
\def\fint{\Xint-}
\def\dashint{\Xint{\raise4pt\hbox to7pt{\hrulefill}}}

\def\XXiint#1#2#3{\setbox0=\hbox{$#1{#2#3}{\iint}$}
    \vcenter{\hbox{$#2#3$}}\kern-0.5\wd0}

\allowdisplaybreaks

%%%%%%%%%%%%%%%%%%%%%%%%%%%%%%%%%%%%%%%%%%%%%%%%

\begin{document}
	
\title[Estimate for the singular case]{Calder\'on-Zygmund type estimate for the singular parabolic double-phase system}

\everymath{\displaystyle}

\makeatletter
\@namedef{subjclassname@2020}{\textup{2020} Mathematics Subject Classification}
\makeatother

 \author{Wontae Kim}
 \address[Wontae Kim]{Department of Mathematics, Uppsala University, P.O. BOX 480, 751 06 Uppsala, Sweden}
 \email{kim.wontae.pde@gmail.com}
%\thanks{???}

\begin{abstract}
    This paper discusses the local Calder\'on-Zygmund type estimate for the singular parabolic double-phase system. The proof covers the counterpart $p<2$ of the result in \cite{K2024}.  Phase analysis is employed to determine an appropriate intrinsic geometry for each phase. Comparison estimates and scaling invariant properties for each intrinsic geometry are the main techniques to obtain the main estimate.
\end{abstract}

\keywords{Parabolic double-phase systems, Calder\'on-Zygmund type estimate.}
\subjclass[2020]{25D30, 35K55, 35K65}
\maketitle

\section{Introduction}
We study the gradient estimate for the parabolic double-phase system
\[
    u_t-\dv(b(z)(|\na u|^{p-2}\na u+a(z)|\na u|^{q-2}\na u))=-\dv(|F|^{p-2}F+a(z)|F|^{q-2}F)
\]
in $\Om_T=\Om\times (0,T)$ where $\Omega$ is a bounded domain in $\mathbb{R}^n$, $n\geq 2$, $T>0$ and the coefficient function $b(z)$ satisfies the ellipticity condition in \eqref{ellipticity}. Throughout the paper, we shall assume that the coefficient function $a(z)$ is non-negative and $(\alpha,\alpha/2)$-H\"older continuous for $\alpha\in(0,1]$, that is, there exists a constant $[a]_\alpha>0$ such that
\begin{equation}\label{coeff}
    |a(x,t)-a(y,t)|\le [a]_{\alpha}|x-y|^\alpha,\quad
    |a(x,t)-a(x,s)|\le [a]_{\alpha}|t-s|^\frac{\alpha}{2}
\end{equation}
for all $x,y\in \Om$ and $t,s\in (0,T)$ while exponents $p$ and $q$ satisfy
\begin{equation} \label{range_pq}
\frac{2n}{n+2}< p \leq 2, \quad p<q\le p+\frac{\alpha(p(n+2)-2n)}{2(n+2)}.    
\end{equation}
Note that $\tfrac{\alpha(p(n+2)-2n)}{2(n+2)}=\tfrac{\alpha p}{n+2}\tfrac{p(n+2)-2n}{2p}$
where $\tfrac{p(n+2)-2n}{2p}$
is the scaling deficit of the singular $p$-Laplace system as in \cite{MR1230384}. 
The aim of this paper is to prove the Calder\'on-Zygmund type estimate of the following implication
\begin{equation}\label{implication}
    |F|^p+a|F|  ^q \in L^\sigma_{\loc}\Longrightarrow |\na u|^p+a|\na u|^q\in L^\sigma_{\loc}
\end{equation}
for all $\sigma\in(1,\infty)$.

The double-phase system has a non-standard growth condition due to the presence of the coefficient $a(z)$. For each point $z$, if $a(z)=0$, the system is reduced to the $p$-Laplace system while, if $a(z)\ne0$, the system is the $(p,q)$-Laplace system. It is presumed that double-phase systems exhibit two different phases, nevertheless, further analysis is necessary as $a(z)\ne0$ does not always imply $a(\cdot)$ is comparable in the neighborhood of $z$. For such a neighborhood, arguments in the $(p,q)$-Laplace system cannot be utilized. Moreover, as nonlinear parabolic systems demand intrinsic geometries for the regularity theory, it is necessary to connect phase and intrinsic geometry. In this paper, we adopt the phase analysis for the double-phase system developed in \cite{KKM} to provide the proper intrinsic geometry for each point. In our phase analysis, there are two types of phase, $p$-intrinsic case and $(p,q)$-intrinsic case. In the $p$-intrinsic case, estimates for the double-phase system are treated in the $p$-intrinsic geometry, which is intrinsic geometry for the $p$-Laplace system. Despite there being a $q$-Laplace part $a|\na u|^{p-2}\na u$, those terms from $q$-Laplace part are perturbed to terms from the $p$-Laplace part $|\na u|^{p-2}\na u$. Furthermore, we will see that in this case, the double phase system is scaling invariant under the $p$-intrinsic geometry. In contrast, if  $(p,q)$-intrinsic case holds, then we will show that there exists a neighborhood in which $a(\cdot)$ is comparable and we will apply the intrinsic geometry of the $(p,q)$-Laplace system. 

Additionally, we point out that the existence of the upper bound for $q$ in \eqref{range_pq} naturally arises in the non-standard growth problems. The term $\tfrac{\alpha p}{n+2}\tfrac{p(n+2)-2n}{2p}$ in the upper bound appears to be natural, but unlike in elliptic double phase system in \cite{MR2076158,MR2058167}, sharpness for \eqref{range_pq} is not known to the best of our knowledge.

The regularity properties of non-standard growth problems were first studied for elliptic equations in \cite{MR969900,MR1094446}. The development of regularity results for elliptic double-phase problems and its phase analysis are proved \cite{baasandorj2023self,MR3348922,MR3360738,MR3294408,de2023regularity,de2024regularity,MR2076158}. For the parabolic case, non-standard problems have been addressed in \cite{MR3073153,MR3356846}, while regularity results for the parabolic double-phase problem can be found in \cite{KKM,KKS,kim2024holder,MR4718687,MR3532237}. We also refer to \cite{cupini2024regularity,marcellini2023local} for more general structures of non-standard growth problems.

Regarding Calderón-Zygmund estimates, the elliptic $p$-Laplace system has been studied extensively, with key results in \cite{MR3035434,MR2187159,MR2345911,MR1486629,MR1246185,MR722254,MR1720770}, while
the parabolic 
$p$-Laplace system was established in \cite{MR2286632}. The elliptic double-phase system case has been considered in \cite{MR3447716,MR3985927}.
For the parabolic double-phase system, the degenerate case $(p\ge2)$ was established in \cite{K2024}. This paper extends the analysis to cover the singular case $(p<2)$.

%%%%%%%%%%%%%%%%%%%%%%%%%%%%%%%%%%%%%%%%%%%%%%%%%%%%%%%%%%%%%%%%%%%%%%%%%%%%%%%%%%%%%%%%%%%%%%%%%%%%%%%%%%%%%%%%%%%%%%%%%%%%%%%%%%%%%%%%%%%%%%%%%%%%%%%%%%%%%%%%%%%%%%%%%%%%%%%%%%%%%%%%%%%%%%%%%%%%%%%%%%%%%%%%%%%%%%%%%%%%%%%%%%%%%%%%%%%%%%%%%%%%%%%%%%%%%%%%%%%%%%%%%%%%%%%%%%%%%%%%%%%%%%%%%%%%%%%%%%%%%%%%%%%%%%%%%%%%%%%%%%%%%%%%%%%%%%%%%%%%%%%%%%%%%%%%%%%%
\section{Notations and main result}

\subsection{Notations}
For a point $z\in\RR^{n+1}$, we denote $z=(x,t)$ where $x\in \RR^n$ and $t\in \RR$.
A ball with centered at $x_0\in\RR^n$ and radius $\rho>0$ is denoted as
\[
    B_\rho(x_0)=\{x\in \RR^n:|x-x_0|<\rho\}.
\]
Parabolic cylinder centered at $z_0=(x_0,t_0)$ and its time interval are denoted as
\[
    Q_\rho(z_0)=B_\rho(x_0)\times I_\rho(t_0),\quad I_\rho(t_0)=(t_0-\rho^2,t_0+\rho^2).
\]
For $a(z)$ described in \eqref{coeff}, we define a functional $H(z,s):\Omega_T\times \RR^+\mapsto\RR^+$ as
\[
    H(z,s)=s^p+a(z)s^q.
\]
In this paper, we use two types of intrinsic cylinders. For $\la\geq1$ and $\rho > 0$, a $p$-intrinsic cylinder centered at $z_0=(x_0,t_0)$ is 
\begin{equation}\label{def_p_cylinder}
	Q_\rho^\la(z_0)= B^\la_\rho(x_0)\times I_{\rho}(t_0), \quad B^\la_\rho(x_0) = B_{\la^\frac{p-2}{2}\rho}(x_0), 
\end{equation}
and a $(p,q)$-intrinsic cylinders centered at $z_0=(x_0,t_0)$ is
\begin{align}\label{def_G_cylinder}
G_{\rho}^\la(z_0)=B^\la_{\rho}(x_0)\times J_{\rho}^\la(t_0),\quad J_\rho^{\la}(t_0)= \Bigl(t_0-\tfrac{\la^p}{H(z_0,\la)}\rho^2,t_0+\tfrac{\la^p}{H(z_0,\la)}\rho^2 \Bigr).  
\end{align}
The time interval includes the information of $z_0$, however, we always omit $z_0$ for the time interval as $H(z_0,\la)$ will remain fixed during our proof.
Nevertheless $G_\rho^\la(z_0)$ has the scaling factor $\la$ both in space and time direction, note that $\tfrac{\la^p}{H(z_0,\la)}\rho^2=\tfrac{\la^2}{H(z_0,\la)}(\la^\frac{p-2}{2}\rho)^2$ and thus $G_\rho^\la(z_0)$ is the standard intrinsic cylinder for $(p,q)$-Laplace system.
For $d>0$, we write
\[
    dQ_\rho^\la(z_0)=Q_{d\rho}^\la(z_0),
    \quad 
    dG_\rho^\la(z_0)=G_{d\rho}^\la(z_0).
\]

Finally, for $f\in L^1(\Omega_T,\RR^N)$ and a measurable set $E\subset\Om_T$ with $0<|E|<\infty$, we denote the integral average of $f$ over $E$ as
\[
	(f)_{E}=\frac{1}{|E|}\iint_{E}f\,dz=\fiint_{E}f\,dz.
\]

\subsection{Main result}
This paper is concerned with the parabolic double-phase system
\begin{equation}\label{eq}
    u_t-\dv\left(b(z)\mA(z,\na u)\right)=-\dv \mA(z,F)\quad\text{in}\quad \Om_T,
\end{equation}
where we abbreviate the parabolic double-phase operator as
\[\mA(z,\xi)=|\xi|^{p-2}\xi+a(z)|\xi|^{q-2}\xi\]
for $z\in\Om_T$ and $\xi\in\RR^{Nn}$ with $N\ge1$
and $b(z)$ is a positive measurable function satisfying the ellipticity condition
\begin{equation}\label{ellipticity}
    0<\nu\le b(z)\le L<\infty\quad\text{for a.e.}\quad z\in \Om_T.
\end{equation}
The weak solution to \eqref{eq} is defined in the following sense.
\begin{definition}
    A measurable map $u: \Om_T\mapsto\RR^N$ such that
    \begin{align*}
    \begin{split}
        &u\in C(0,T;L^2(  \Omega  ,\RR^N))\cap L^1(0,T;W_0^{1,1}(\Om,\RR^N))\\
        &\qquad\text{with}\quad\iint_{ \Om_T}H(z,|u|)+H(z,|\na u|)\,dz<\infty
    \end{split}
    \end{align*}
    is a weak solution to \eqref{eq} if for every $\varphi\in C_0^\infty(   \Omega_T  ,\RR^N)$, there holds
    \[ \iint_{ \Om_T}\left(-u\cdot \varphi_t+b(z)\mA(z,\na u)\cdot \na\varphi\right)\,dz=\iint_{\Om_T}\mA(z,F)\cdot \na\varphi\,dz.\]
\end{definition}

Some estimates of weak solutions to \eqref{eq} involve data of $u$ and $F$. For this, we write $c=c(\data)$ if the constant $c$ depends on the following values
  \begin{align*}
  \begin{split}
      n,N,p,q,\alpha,\nu,L,[a]_{\alpha},\diam(\Omega),\|u\|_{L^\infty(0,T;L^2(\Omega))},\|H(z,|\na u|)+H(z,|F|)\|_{L^1(\Omega_T)}.
  \end{split}
\end{align*}

Before we introduce the main result of this paper, we first state the partial result. In fact, it will play a crucial part in proving the main result.
\begin{theorem}[\cite{MR4718687}, Higher integrability]\label{higher}
    Let $u$ be a weak solution to \eqref{eq}. Then there exist $\varepsilon_0=\varepsilon_0(\data)\in (0,1)$ and $c=c(\data,\|a\|_{L^\infty(\Om_T)})$ such that for any $Q_{2\rho}(z_0)\subset  \Om_T$ and $\varepsilon\in(0,\varepsilon_0]$, there holds
    \begin{align*}
\begin{split}
    \fiint_{Q_{\rho}(z_0)}(H(z,|\na u|))^{1+\varepsilon}\,dz
        &\le c\left(\fiint_{Q_{2\rho}(z_0)}H(z,|\na u|)\,dz\right)^{1+\frac{2q\varepsilon }{p(n+2)-2n}}\\
        &\qquad+c\left(\fiint_{Q_{2\rho}(z_0)}(H(z,|F|))^{1+\varepsilon}\,dz+1\right)^{\frac{2q}{p(n+2)-2n}}.
\end{split}
\end{align*}
\end{theorem}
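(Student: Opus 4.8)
The plan is to follow the by-now standard route for parabolic problems with $(p,q)$-growth: first produce an \emph{intrinsic} reverse H\"older inequality, then self-improve it through a stopping-time and covering argument; since the statement is quoted from \cite{MR4718687}, I only sketch the structure. The conceptually delicate first step is the choice of geometry. Because $\mA(z,\cdot)$ changes character according to the size of $a(z)|\na u|^{q-p}$ relative to $1$, I would run the phase analysis of \cite{KKM}: given a sub-intrinsic cylinder and a height $\la\geq1$ one decides whether the cylinder is $p$-intrinsic, in which case one works in $Q_\rho^\la$ and treats the $q$-part $a(z)|\na u|^{q-2}\na u$ as a perturbation of $|\na u|^{p-2}\na u$, or $(p,q)$-intrinsic, in which case $a(\cdot)$ turns out to be comparable on the cylinder and one works in $G_\rho^\la$, where, by the remark following \eqref{def_G_cylinder}, the equation is exactly the intrinsically rescaled $(p,q)$-Laplace system.

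On a fixed cylinder of either intrinsic family the engine is a Caccioppoli inequality followed by a parabolic Sobolev--Poincar\'e inequality. Testing \eqref{eq} with $\varphi=(u-\bar u)\eta$ for a space-time cutoff $\eta$ — after a Steklov averaging in time to make the test function admissible in the singular range $p<2$, then passing to the limit — yields, in the intrinsic scaling, a bound of the schematic form
\[
    \sup_{I}\,\fiint_{B}\Bigl|\tfrac{u-\bar u}{\rho}\Bigr|^2\,dx+\fiint_{Q}H(z,|\na u|)\,dz
    \lesssim \fiint_{2Q}H\Bigl(z,\tfrac{|u-\bar u|}{\rho}\Bigr)\,dz+\fiint_{2Q}H(z,|F|)\,dz,
\]
the intrinsic time length being precisely what balances the mismatched parabolic homogeneity of the left-hand side. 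Interpolating the sup-in-time $L^2$ control of $u$ against the $L^p$ control of $\na u$ through the parabolic Sobolev embedding — this is where the constraint $p>\tfrac{2n}{n+2}$ in \eqref{range_pq} enters, so that the relevant intermediate exponent is admissible — turns the right-hand side into a sublinear power of $\fiint_{2Q}H(z,|\na u|)$, i.e.\ an intrinsic reverse H\"older inequality; the exponent $\tfrac{2q}{p(n+2)-2n}=\tfrac{q}{p}\cdot\tfrac{2p}{p(n+2)-2n}$ encodes the (inverse) scaling deficit of the singular $p$-Laplacian together with the $q/p$ correction coming from the $q$-growth term.

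With such an inequality available on both intrinsic families, I would pass to the super-level sets $E_\la=\{z:H(z,|\na u|)>\la\}$ and run a stopping-time argument over a Vitali subfamily of intrinsic cylinders on which $\fiint H(z,|\na u|)\sim\la$ while every larger concentric cylinder stays below $\la$. This should give a Calder\'on--Zygmund type measure estimate for $E_\la$ in terms of the contribution of $H(z,|\na u|)$ at heights comparable to $\la$ plus that of $H(z,|F|)$, the geometric series arising from the reverse H\"older gap. Multiplying by $\la^{\varepsilon-1}$, integrating in $\la$, and applying Fubini then upgrades $H(z,|\na u|)\in L^1_{\loc}$ to $L^{1+\varepsilon}_{\loc}$ with the stated quantitative bound for some $\varepsilon=\varepsilon(\data)$ small; the nonlinear exponent on the right-hand side is an artifact of the intrinsic rescaling, exactly as for the $p$-Laplace system in \cite{MR1230384,MR2286632}.

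The hard part will be keeping the two intrinsic geometries compatible: one must check that the stopping-time cylinders can always be chosen in the geometry dictated by the phase, that switching between the $p$-intrinsic and $(p,q)$-intrinsic regimes preserves the uniformity of the constants, and — on genuinely $(p,q)$-intrinsic cylinders — that the H\"older continuity \eqref{coeff} together with the upper bound $q\le p+\tfrac{\alpha(p(n+2)-2n)}{2(n+2)}$ in \eqref{range_pq} is strong enough to force $a(\cdot)\approx a(z_0)$ on the whole cylinder, so that $H(z,\cdot)\approx H(z_0,\cdot)$ and the fixed-coefficient $(p,q)$-theory can be invoked. This is exactly where the balance between $q-p$ and $\alpha$ is consumed.
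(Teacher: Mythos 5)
The paper does not prove Theorem~\ref{higher} at all; it is imported verbatim from \cite{MR4718687}, and your sketch correctly reproduces the strategy of that reference (phase analysis \`a la \cite{KKM} to choose between $p$-intrinsic and $(p,q)$-intrinsic cylinders, an intrinsic Caccioppoli--Sobolev--Poincar\'e reverse H\"older inequality, then a stopping-time/Vitali covering and Fubini self-improvement). So your proposal is consistent with the intended proof; no discrepancy to report.
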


To prove the full range $\sigma$ in \eqref{implication}, we further assume the following two conditions. Firstly, we assume the coefficient $b$ has the VMO condition
\begin{equation}\label{vmo}
    \lim_{r\to0^+}\sup_{|I|\le 2r^2}\sup_{B_r(x_0)\subset \Om}\fiint_{B_r(x_0)\times I}|b(z)-(b)_{ B_{r}(x_0)\times I}|\,dz=0,
\end{equation}
where $I\subset (0,T)$ is any open interval.
Secondly, we will assume 
\begin{equation}\label{inf_a}
    \inf_{z\in\Om_T}a(z)>0.
\end{equation}
With these assumptions, the Calder\'on-Zygmund type estimate is as follows.
\begin{theorem}\label{main_theorem}
    Let $u$ be a weak solution to \eqref{eq} with assumptions \eqref{vmo} and \eqref{inf_a}. Suppose $Q_{4R}(z_0)\subset \Om_T$ for some $R\in(0,1)$.
    Then there exists $\rho_0\in(0,R)$ depending on 
    \[\data,\|H(z,|F|)\|_{L^{1+\varepsilon_0}(\Om_T)},\|a\|_{L^\infty(\Om_T)},R\]
    such that for any $\sigma\in(1+\varepsilon_0,\infty)$ and $\rho\in(0,\rho_0)$, there holds
    \begin{align*}
\begin{split}
    \fiint_{Q_{\rho}(z_0)}(H(z,|\na u|))^{\sigma}\,dz
        &\le c\left(\fiint_{Q_{2\rho}(z_0)}H(z,|\na u|)\,dz\right)^{1+\frac{2q(\sigma-1) }{p(n+2)-2n}}\\
        &\qquad+c\left(\fiint_{Q_{2\rho}(z_0)}(H(z,|F|))^{\sigma}\,dz+1\right)^{\frac{2q}{p(n+2)-2n}},
\end{split}
\end{align*}
where $c=c(\data,\|a\|_{L^\infty(\Om_T)},\sigma)$.
\end{theorem}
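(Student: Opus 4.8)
The plan is to upgrade Theorem \ref{higher} from the fixed exponent $1+\varepsilon_0$ to arbitrary $\sigma \in (1+\varepsilon_0,\infty)$ by a stopping-time / covering argument performed \emph{intrinsically}, following the Calder\'on–Zygmund scheme in the degenerate companion paper \cite{K2024}. First I would fix $\sigma$ and introduce, on a fixed cylinder $Q_{2R}(z_0)$, the super-level sets of $H(z,|\na u|)$ at height $\lambda$, together with a comparison functional built from $H(z,|F|)$. Because the system is non-standard and singular, the sub-level parameter $\lambda$ must be coupled to the geometry: for each Lebesgue point one runs the phase analysis of \cite{KKM} to decide whether we are in the $p$-intrinsic regime (use $Q_\rho^\lambda$ from \eqref{def_p_cylinder}) or the $(p,q)$-intrinsic regime (use $G_\rho^\lambda$ from \eqref{def_G_cylinder}), and one extracts from a Vitali-type covering a family of intrinsic cylinders on which the average of $H(z,|\na u|)$ equals $\lambda$ while on all larger concentric cylinders it is $\le \lambda$. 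Assumption \eqref{inf_a} guarantees $a$ is bounded below, so $H$ is genuinely comparable to $s^q$ at large scales and the two geometries do not degenerate; assumption \eqref{vmo} is what makes the frozen-coefficient comparison maps available.

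The core analytic input is a \emph{reverse-H\"older / density estimate on each stopping cylinder}: on a selected intrinsic cylinder one compares $u$ with the solution $v$ of the homogeneous frozen system (constant $b$, and in the $(p,q)$-case constant $a$, using that $a$ is comparable there by the phase analysis), invokes the known higher integrability and gradient $L^\infty$/Lipschitz bounds for $v$, and then controls $\na u - \na v$ by the right-hand side $\mathcal{A}(z,F)$ together with the VMO oscillation of $b$. This yields, on the super-level set at height $\lambda$, a "good-$\lambda$" inequality of the form
\begin{align*}
    \bigl|\{H(z,|\na u|)>A\lambda\}\cap Q_\rho\bigr|
    \le c\,\varepsilon\,\bigl|\{H(z,|\na u|)>\lambda\}\cap Q_{2\rho}\bigr|
    + c\,\bigl|\{H(z,|F|)>\delta\lambda\}\cap Q_{2\rho}\bigr|,
\end{align*}
with $A$ large, $\varepsilon,\delta$ small and at our disposal. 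The key point — and the reason the singular range \eqref{range_pq} is exactly this — is that in passing between the ambient cylinder $Q_\rho$ and the intrinsic cylinder $Q_\rho^\lambda$ or $G_\rho^\lambda$ one accumulates powers of $\lambda$ governed by the scaling deficit $\tfrac{p(n+2)-2n}{2p}$; the constraint on $q$ is precisely what keeps these exponents controllable so that the $q$-Laplace contribution is absorbed as a perturbation of the $p$-Laplace contribution (the "$p$-intrinsic case") or handled by the genuine $(p,q)$-intrinsic scaling (the "$(p,q)$-intrinsic case"). Here one uses the scaling-invariance of the double-phase system under each intrinsic geometry that is emphasized in the introduction.

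Finally, I would multiply the good-$\lambda$ inequality by $\lambda^{\sigma-2}$, integrate in $\lambda$ over $(\lambda_0,\infty)$ for an appropriate starting level $\lambda_0$ chosen as a large multiple of $\bigl(\fiint_{Q_{2\rho}} H(z,|\na u|)\bigr)$ plus the $F$-term (this is where $\rho_0$ and its stated dependence on $\|H(z,|F|)\|_{L^{1+\varepsilon_0}}$, $\|a\|_{L^\infty}$ and $R$ enter, via the absolute continuity of the integral of $(H(z,|F|))^{1+\varepsilon_0}$), and choose $A$ large then $\varepsilon$ small so the first term is absorbed into the left-hand side. Converting the resulting distributional bound back into an integral inequality and tracking the scaling-deficit exponents produces exactly the claimed estimate with the power $1+\tfrac{2q(\sigma-1)}{p(n+2)-2n}$ on the gradient term and $\tfrac{2q}{p(n+2)-2n}$ on the $F$-term, matching the structure of Theorem \ref{higher}. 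The main obstacle I expect is the bookkeeping of the two intrinsic geometries simultaneously inside one stopping-time argument — in particular, ensuring the Vitali covering, the comparison estimates, and the $\lambda$-power accounting are all consistent across the phase dichotomy, and that the switching between $p$-intrinsic and $(p,q)$-intrinsic cylinders near the "phase boundary" does not lose the smallness needed for the absorption; the singular sign of $p-2$ (so that $B_\rho^\lambda \supset B_\rho$) also forces care in all the inclusions of cylinders relative to $Q_{4R}(z_0)$.
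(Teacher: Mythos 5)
Your outline follows essentially the same route as the paper: phase dichotomy ($p$-intrinsic vs.\ $(p,q)$-intrinsic), intrinsic stopping-time cylinders with a Vitali covering, comparison with frozen-coefficient homogeneous solutions enjoying Lipschitz gradient bounds, a level-set estimate, and integration over levels. Two points, however, deserve attention.

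First, the absorption step as you describe it would fail as stated. After integrating the good-$\lambda$ inequality in $\lambda$ you propose to ``choose $A$ large then $\varepsilon$ small so the first term is absorbed into the left-hand side,'' but the quantity to be absorbed is $\iint (H(z,|\na u|))^{\sigma}$, which is not known a priori to be finite --- indeed its finiteness is the content of the theorem. The paper handles this by truncating, working with $H(z,|\na u|)(\min\{H(z,|\na u|),k\})^{\sigma-1}$, and by running the whole stopping-time construction on a pair of radii $\rho\le r_1<r_2\le 2\rho$ with the starting level $\La$ in \eqref{La} depending on $r_2-r_1$; the $\epsilon$-term is then removed via the two-radius iteration lemma (Lemma~\ref{iter_lem}), and only afterwards does one let $k\to\infty$. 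Without the truncation and the hole-filling iteration your argument is circular at the absorption step. Second, a minor mischaracterization: \eqref{inf_a} is not used to make $H$ comparable to $s^q$ in the estimates (the paper's remark explicitly warns that such a reading would make the constants degenerate as $\inf a\to 0^+$); it is needed only to guarantee existence of the Dirichlet comparison maps $\zeta$, $\eta$, $v$. Neither point changes the overall strategy, which matches the paper's.
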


\begin{remark}
    We point out that the assumption \eqref{inf_a} is made purely for technical reasons and does not diminish the novelty of our paper.
   It might be misconstrued that Theorem~\ref{main_theorem} could be deduced from the estimate of the  $(p,q)$-Laplace system where $a$ is constant. If \eqref{eq} is interpreted as a $(p,q)$-Laplace system, then $\inf a$ serves as the lower bound for the ellipticity constant, resulting in the constant in the estimate depending on $\inf a$ and diverging as $\inf a$ approaches $0^+$. Indeed, regarding $c|\na u|^{q-2}\na u$ as a $q$-Laplace part with fixed constant $c>0$ locally, the remaining term $c^{-1}a(z)$ is considered as the coefficient function to proceed further by adopting technique in $(p,q)$-Laplace system. However, as presented, our estimate remains stable with respect to $\inf a$.

   In this paper, the assumption \eqref{inf_a} is employed only to construct the Dirichlet boundary problem, as there is no existence result when $\inf a =0$.
    This assumption characterizes the double-phase operator as a $q$-Laplace type given as
    \[
    \inf_{z\in \Om_T}a(z) |\xi|^q\le\mathcal{A}(z,\xi)\cdot\xi\le (1+\|a\|_{L^\infty(\Om_T)})(1+|\xi|)^q
    \]
    and the existence result of the $q$-Laplace type system can be employed.
    Moreover, as noted in \cite{KKS}, the existence of the Dirichlet boundary problem when $\inf a=0$ can be proved by applying the global Calderón-Zygmund type estimate.
    \end{remark}

\section{Comparison estimates}\label{com_sec}
This section aims to provide comparison estimates. As the double-phase system \eqref{eq} has two distinct phases, it is necessary to establish these estimates for each phase. We will explain the heuristic approach for distinguishing between the phases and provide a more detailed description in the next section.

In the Calder\'on-Zygmund type estimate of the double-phase system, we consider the upper-level set
\[
U=\{H(z,|\na u(z)|)>\La\}
\]
for each sufficiently larger $\La>1+\|a\|_{L^\infty(\Om_T)}$. In order to study the intrinsic geometry, for each $\omega\in U$, we defined $\la_\omega$ to be
\[
\La=H(\omega,\la_\omega)=\la_\omega^p+a(\omega)\la_\omega^q.
\]
Since $H(\om,|s|)$ is   an increasing  function on $|s|$, it easily follows that
\[
|\na u(\om)|>\la_\om.
\]
For the constant $K$ defined as
\begin{align}\label{K_def}
   180(1+[a]_{\alpha})\left(\frac{1}{|B_1|}\iint_{Q_{2\rho_0}(z_0)}\left(H(z,|\na u|)+\delta^{-1}H(z,|F|)\right)\,dz+1\right)^\frac{\alpha}{n+2},
\end{align}
where constant $\delta\in(0,1)$, either of the following holds
\[
K^2\la_\om^p\ge a(\om)\la_\om^q\quad \text{or}\quad K^2\la_\om^p< a(\om)\la_\om^q.
\]
The first case is equivalent to $a(\om)\le K^2\la_\om^{p-q}$ and it changes terms deduced from the $q$-Laplace part, $a(z)|\na u|^{q-2}\na u$, into the term of the $p$-Laplace part on some neighborhood of $\om$ in the context of intrinsic geometry. Moreover, this condition enforces the $q$-Laplace part invariant under
the scaling argument in the $p$-intrinsic geometry \eqref{def_p_cylinder}, see Lemma~\ref{p_sc_lem}. On the other hand, if the second case holds, then we will prove $a(z)$ is comparable on some neighborhood of $\om$ and $(p,q)$-intrinsic geometry in \eqref{def_G_cylinder} would be applied for the discussion.

In this section, constants $\ep,\delta,\rho_0$ will be used throughout the paper to carry out comparison estimates and the estimate in Theorem~\ref{main_theorem}. The constant $\epsilon\in (0,1)$ will be used for the iteration argument and be determined later in \eqref{def_ep}. The constant $\delta\in (0,1)$, which also affects $K$ in \eqref{K_def}, will be utilized to derive comparison estimates and be chosen depending on $\ep$ and $\data$. Finally, $\rho_0\in(0,1)$ will also be used for obtaining comparison estimates, be selected after taking $\delta$ and depend on $\epsilon$, $\delta$, $\data$, $\|a\|_{L^\infty(\Om_T)}$ and $\| H(z,|F|)\|_{L^{1+\varepsilon_0}(\Om_T)}$. On the other side, we will encounter the situation that constants in some estimates will also depend on $\delta$. For this case, we will write
\[
c_\delta=c(...,\delta).
\]
Finally, we shorten the following constant
\begin{align}\label{V_def}
    V=9K.
\end{align}
This constant will be used for the Vitali covering constant of our case in Lemma~\ref{Vitali}. 

\subsection{$p$-intrinsic case.}
In this subsection, we will obtain comparison estimates for the case $K^2\la_\om^p\ge a(\om)\la_\om^q$ with the assumptions on the stopping time argument in the $p$-intrinsic cylinder defined as in \eqref{def_p_cylinder}. 
\begin{assumption}\label{p_ass}
   For $\om=(y,s)\in Q_{R}(z_0)$, there exist $\lao>1$ and $\rho_\om\in(0,\rho_0)$ such that $Q_{16V\rho_\om}^{\la_\om}(\om)\subset Q_{2R}(z_0)$ and satisfying the following conditions.
    \begin{enumerate}[label={(\roman*)}]
        \item\label{i} $p$-intrinsic case: $K^2\lao^p\ge a(\om)\lao^q$,
    \item stopping time argument for $p$-intrinsic cylinder: 
    \begin{enumerate}[label=(\alph*),series=theoremconditions]
        \item\label{a} $\fiint_{Q_{16V\rho_\om}^{\lao}(\om)}\left(H(z,|\na u|)+\delta^{-1}H(z,|F|)\right)\,dz< \lao^p$,
        \item\label{b} $\fiint_{Q_{\rho_\om}^{\lao}(\om)}\left(H(z,|\na u|)+\delta^{-1}H(z,|F|)\right)\,dz=\lao^p$,
\end{enumerate}
\end{enumerate}
\end{assumption}
In this subsection, we omit the referenced point $\om$ and write $\lao$, $\rho_\om$ $Q_{\rho_\om}^{\lao}(\om)$ as $\la$, $\rho$ and $Q_{\rho}^\la$ for simplicity.

Along with the stopping time argument assumption, the following energy bounds hold.
\begin{lemma}\label{en_lem}
    There exists $c_\delta=c(\data,\delta)$ such that
    \[
    \sup_{I_{8V\rho}}\fint_{B_{8V\rho}^\la}\frac{|u-u_0|^2}{(8V\rho )^2}\,dx+\fiint_{Q_{8V\rho}^{\la}}\frac{|u-u_0|^p}{(8V\la^{\frac{p-2}{2}}\rho )^p}\,dz<c_\delta\la^p,
    \]
    where we shorten the notation
    \[
    u_0=(u)_{Q_{8V\rho}^\la}=(u)_{Q_{8V\rho_\om}^{\lao} (\om)}.
    \]
\end{lemma}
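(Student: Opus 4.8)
The plan is to run a standard Caccioppoli (energy) estimate for the double-phase system on the $p$-intrinsic cylinder $Q_{8V\rho}^\la$, and then absorb the right-hand side using the stopping-time bound~\ref{a} from Assumption~\ref{p_ass} together with the $p$-intrinsic scaling. First I would fix a cutoff function: a smooth $\zeta(x)$ supported in $B_{8V\rho}^\la$ with $\zeta\equiv1$ on a slightly smaller ball and $|\na\zeta|\lesssim(\la^{(p-2)/2}\rho)^{-1}$, and a Lipschitz time cutoff $\chi(t)$ on $I_{8V\rho}$. Testing the weak formulation of~\eqref{eq} with $\varphi=(u-u_0)\zeta^q\chi$ (after a Steklov averaging to make the time derivative legitimate), the $u_t$-term produces the sup-in-time $L^2$-quantity $\sup_t\fint \tfrac{|u-u_0|^2}{(8V\rho)^2}$, the elliptic term $b(z)\mA(z,\na u)\cdot\na\varphi$ produces a good term $\iint \zeta^q(|\na u|^p+a|\na u|^q)$ minus cross terms hitting $\na\zeta$, and the $F$-term is handled by Young's inequality. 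Using the ellipticity~\eqref{ellipticity}, $|\mA(z,\xi)|\le|\xi|^{p-1}+a(z)|\xi|^{q-1}$, and Young's inequality with exponents $(p,p')$ and $(q,q')$ on the cross terms, one absorbs the $|\na u|$-terms into the left side and is left with
\[
\sup_{I_{8V\rho}}\fint_{B_{8V\rho}^\la}\frac{|u-u_0|^2}{(8V\rho)^2}\,dx+\fiint_{Q_{8V\rho}^\la}\zeta^q H(z,|\na u|)\,dz
\lesssim \fiint_{Q_{8V\rho}^\la}H\!\Bigl(z,\tfrac{|u-u_0|}{8V\la^{(p-2)/2}\rho}\Bigr)dz+\fiint_{Q_{8V\rho}^\la}H(z,|F|)\,dz.
\]

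The subtlety, and the place where the $p$-intrinsic geometry enters, is controlling the first term on the right, which contains the $q$-part $a(z)\bigl(\tfrac{|u-u_0|}{8V\la^{(p-2)/2}\rho}\bigr)^q$. Here I would use the $p$-intrinsic case hypothesis~\ref{i}, $a(\om)\la^q\le K^2\la^p$, together with the H\"older continuity~\eqref{coeff} of $a$ and the fact (to be used in the form already built into the constant $K$ in~\eqref{K_def} and the smallness of $\rho_0$) that on $Q_{16V\rho}^\la$ the oscillation of $a$ is controlled, so that $a(z)\lesssim K^2\la^{p-q}$ on the whole cylinder — this is precisely the scaling-invariance phenomenon referenced before Lemma~\ref{p_sc_lem}. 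With this, $a(z)s^q\lesssim K^2\la^{p-q}s^q$, and the $q$-term in $H(z,\cdot)$ applied to $\tfrac{|u-u_0|}{8V\la^{(p-2)/2}\rho}$ is comparable to $\la^{p}$ times a dimensionless ratio involving $|u-u_0|/(8V\la^{(p-2)/2}\rho)$, so the whole right-hand side integrand is dominated by a harmless multiple of $\bigl(\tfrac{|u-u_0|}{8V\la^{(p-2)/2}\rho}\bigr)^p + (\text{lower order})$ after Young's inequality, up to constants depending on $K$, i.e. on $\data$.

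To close the estimate I would invoke a parabolic Sobolev–Poincar\'e inequality on $Q_{8V\rho}^\la$: the $L^p$-norm of $\tfrac{|u-u_0|}{8V\la^{(p-2)/2}\rho}$ is bounded, via the Gagliardo–Nirenberg/Sobolev interpolation in the singular range $p>\tfrac{2n}{n+2}$, by a product of $\sup_t\fint\tfrac{|u-u_0|^2}{(8V\rho)^2}$ (raised to a small power) and $\fiint\zeta^q|\na u|^p$ (raised to a complementary power), plus a term from $\fiint H(z,|\na u|)$. Then Young's inequality lets me absorb these back into the left-hand side with a small coefficient, and what survives on the right is a constant times $\fiint_{Q_{16V\rho}^\la}\bigl(H(z,|\na u|)+\delta^{-1}H(z,|F|)\bigr)\,dz$, which by the stopping-time bound~\ref{a} is $<\la^p$; adding the missing piece $\fiint_{Q_{8V\rho}^\la}H(z,|F|)\le\delta^{-1}\fiint\cdots<\la^p$ gives the claimed bound $<c_\delta\la^p$. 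The main obstacle is the iteration/absorption bookkeeping: because the Sobolev–Poincar\'e step reintroduces the energy quantities with fractional powers, one must set up the absorption carefully (possibly on a family of nested cylinders with an iteration lemma) so that the final constant depends only on $\data$ and $\delta$ and not on $\la$ — the $p$-intrinsic scaling is exactly what makes all powers of $\la$ cancel, and verifying that cancellation is the heart of the argument.
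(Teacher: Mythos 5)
Your plan is correct and follows essentially the same route as the paper, which simply invokes the Caccioppoli inequality together with conditions \ref{i} and \ref{a} and refers to \cite[Lemma~3.6 and (3.8)]{MR4718687} for the computation; your sketch (test function $(u-u_0)\zeta^q\chi$, perturbation of the $a(z)|\cdot|^q$ terms into $p$-terms via the $p$-intrinsic condition and the H\"older continuity of $a$, then parabolic Sobolev--Poincar\'e interpolation and absorption) is precisely the content of that cited argument. No discrepancy to report.
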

\begin{proof}
    The proof of this estimate is based on the Caccioppoli inequality and uses \ref{i} and \ref{a} for the conclusion. In particular, note that \ref{a} implies
    \[
    \fiint_{Q_{16V\rho}^{\lao}}\left(H(z,|\na u|)+H(z,|F|)\right)\,dz< \la^p
    \]
    The conclusion follows from the argument in \cite[Lemma~3.6 and $(3.8)$]{MR4718687} by replacing $K$ in there with \eqref{K_def}.
\end{proof}

\begin{remark}
    The parabolic Poincare inequality with the previous lemma leads to
    \[
    \fiint_{Q_{V\rho}^\la} \frac{|u-u_0|^\vartheta}{(8V\la^{\frac{p-2}{2}}\rho )^\vartheta}\,dz \le c_\delta \la^\vartheta
    \]
    for any $\vartheta\in [1,\tfrac{p(n+2)}{n}]$ where $c_\delta=c(\data,\delta)$.
\end{remark}

The above inequality is first established for the $p$-Laplace problems in \cite{MR1749438}. The $p$-intrinsic geometry in \eqref{def_p_cylinder} plays a role in assigning the same $\vartheta$ to both sides of the inequality. Meanwhile, for the double-phase problem, it is necessary to perturb the term, produced by the $q$-Laplace part like
\[
\rho^\alpha\fiint_{Q_{V\rho}^\la} \frac{|u-u_0|^\vartheta}{(8V\la^{\frac{p-2}{2}}\rho )^\vartheta}\,dz,
\]
into terms from the $p$-Laplace part. Moreover, it is relevant to the admissible range of $q$.
We put this issue in the intrinsic geometry setting in the following lemma.

\begin{lemma}\label{p_lem}
   For any constant $1<c_\delta=c(\data,\|a\|_{\infty},\|H(z,|F|)\|_{L^{1+\varepsilon_0}(\Om_T)},\delta)$, there exists $\rho_0=\rho_0(\data,\|a\|_{\infty},\|H(z,|F|)\|_{L^{1+\varepsilon_0}(\Om_T)},R,\delta,\ep)\in(0,1)$ such that if $\rho\in (0,\rho_0)$, then
\[c_\delta\rho^\alpha\la^q\le \frac{1}{(2V)^{n+2}2^{2q}3}\epsilon\la^p.\]
 \end{lemma}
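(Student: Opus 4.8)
The plan is to extract from Assumption~\ref{p_ass} a quantitative bound on $\la$ in terms of the data and $\rho$, and then simply choose $\rho_0$ small enough that the resulting power of $\rho$ beats the fixed constant $c_\delta/\epsilon$ on the right-hand side. First I would use the stopping-time equality \ref{b}, namely
\[
\la^p=\fiint_{Q_{\rho}^{\la}}\bigl(H(z,|\na u|)+\delta^{-1}H(z,|F|)\bigr)\,dz,
\]
and bound the right-hand side from above by comparing the intrinsic cylinder $Q_\rho^\la$ with a fixed reference cylinder. Since $\la\ge 1$ and $p\le 2$, we have $\la^{\frac{p-2}{2}}\le 1$, so $B_\rho^\la\subset B_\rho$ and hence $Q_\rho^\la\subset Q_\rho(\om)\subset Q_{2R}(z_0)$; moreover $|Q_\rho^\la|=\la^{\frac{n(p-2)}{2}}|Q_\rho(\om)|$, which gives
\[
\la^p\le \la^{-\frac{n(p-2)}{2}}\,\frac{1}{|Q_\rho(\om)|}\iint_{Q_{2R}(z_0)}\bigl(H(z,|\na u|)+\delta^{-1}H(z,|F|)\bigr)\,dz.
\]
Writing $|Q_\rho(\om)|=|B_1|\,\rho^{n+2}$ and recalling the definition \eqref{K_def} of $K$, the integral is controlled by $(K/(180(1+[a]_\alpha)))^{(n+2)/\alpha}$ up to constants; this is where the term $\|a\|_\infty$ and $\|H(z,|F|)\|_{L^{1+\varepsilon_0}}$ enter the dependence of the forthcoming $\rho_0$. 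Rearranging and using $\frac{n(p-2)}{2}>-\frac{n+2}{?}$... more simply, absorbing the $\la^{-\frac{n(p-2)}{2}}$ factor (which is $\le 1$ since $p\le 2$), I obtain a clean bound of the shape
\[
\la^{p+\frac{n(p-2)}{2}}\le \frac{C(\data,\|a\|_\infty,\|H(z,|F|)\|_{L^{1+\varepsilon_0}})}{\rho^{n+2}},
\]
hence $\la\le C'\rho^{-\frac{2(n+2)}{p(n+2)-2n}}$, using that $p+\frac{n(p-2)}{2}=\frac{p(n+2)-2n}{2}>0$ by the lower bound on $p$ in \eqref{range_pq}.

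Next I would substitute this into the quantity $c_\delta\rho^\alpha\la^q$ that we must estimate. Using the bound on $\la$,
\[
c_\delta\rho^\alpha\la^q\le c_\delta\,C'^q\,\rho^{\alpha-\frac{2q(n+2)}{p(n+2)-2n}}.
\]
The exponent of $\rho$ is
\[
\alpha-\frac{2q(n+2)}{p(n+2)-2n}
=\frac{2(n+2)}{p(n+2)-2n}\Bigl(\frac{\alpha(p(n+2)-2n)}{2(n+2)}-q+p\Bigr)+\alpha\cdot 0\ \cdots
\]
— more transparently, the admissible range \eqref{range_pq} states precisely $q\le p+\frac{\alpha(p(n+2)-2n)}{2(n+2)}$, equivalently $\frac{2q(n+2)}{p(n+2)-2n}\le \frac{2p(n+2)}{p(n+2)-2n}+\alpha$, which does \emph{not} immediately give a positive exponent. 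So I must be more careful: the correct bookkeeping is that $\la^q=\la^p\cdot\la^{q-p}$, and I should only use the crude bound on the extra factor $\la^{q-p}$ while keeping $\la^p$ paired with the $\rho^\alpha$ through a sharper estimate. Concretely, from the stopping time identity $\la^p=\fiint_{Q_\rho^\la}(\cdots)$ and the reference-cylinder comparison I get $\la^p\lesssim \rho^{-(n+2)}\cdot(\text{data})$ without the awkward $\la$-power if I instead bound $|Q_\rho^\la|\ge |Q_{2R}|^{-1}$-type... Let me restate the clean route: multiply both sides of the target inequality by nothing and instead prove $\rho^\alpha\la^{q-p}\le \text{(small)}$, since then $c_\delta\rho^\alpha\la^q=c_\delta(\rho^\alpha\la^{q-p})\la^p$ and the surviving $\la^p$ can be absorbed... but the target has $\epsilon\la^p$ on the right, so indeed it suffices to show
\[
c_\delta\rho^\alpha\la^{q-p}\le \frac{\epsilon}{3(2V)^{n+2}2^{2q}}.
\]
Now $q-p>0$ and $q-p\le \frac{\alpha(p(n+2)-2n)}{2(n+2)}$, so using $\la\le C'\rho^{-\frac{2(n+2)}{p(n+2)-2n}}$ we get
\[
\rho^\alpha\la^{q-p}\le C'^{q-p}\rho^{\alpha-\frac{2(n+2)(q-p)}{p(n+2)-2n}}\le C'^{q-p}\rho^{\alpha-\alpha}=C'^{q-p},
\]
which is still not small — the exponent is exactly $0$ at the endpoint $q=p+\frac{\alpha(p(n+2)-2n)}{2(n+2)}$. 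Hence strict inequality in the $q$-range, or equivalently a strictly positive gap, is what makes the exponent of $\rho$ strictly positive; then choosing $\rho_0$ small forces $\rho^{\text{positive}}$ below any prescribed threshold.

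This brings me to the main obstacle: since \eqref{range_pq} allows equality $q=p+\frac{\alpha(p(n+2)-2n)}{2(n+2)}$, the exponent of $\rho$ can be exactly $0$, and a naive bound on $\la$ does not suffice. The resolution — which I expect is the intended one — is that the bound on $\la$ is not merely $\la\lesssim \rho^{-\frac{2(n+2)}{p(n+2)-2n}}$ with a constant depending only on the data, but rather $\la\lesssim K^{\frac{n+2}{\alpha}\cdot\frac{2}{p(n+2)-2n}}\rho^{-\frac{2(n+2)}{p(n+2)-2n}}$ where $K$ is exactly the quantity in \eqref{K_def} that also carries an $\rho_0$-dependent integral. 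Tracking $K$ explicitly, one sees $\rho^\alpha\la^{q-p}\lesssim K^{2(q-p)(n+2)/(\alpha(p(n+2)-2n))}\rho^{\alpha-2(n+2)(q-p)/(p(n+2)-2n)}$, and at the endpoint the $\rho$-power vanishes leaving a power of $K$; but $K$ itself contains $\iint_{Q_{2\rho_0}(z_0)}(\cdots)$, which tends to $0$ as $\rho_0\to 0$ by absolute continuity of the integral (the term "$+1$" inside is harmless because it is raised to the small power $\alpha/(n+2)$ and then... no — the "$+1$" prevents $K\to0$). So in fact the genuinely clean argument must avoid the endpoint degeneracy differently: I will simply observe that the claimed inequality only needs to hold for $\rho<\rho_0$, and that $\rho_0$ is allowed to depend on $\epsilon$, $\delta$, and the data; combining the two displayed bounds, the worst case is the endpoint $q-p=\frac{\alpha(p(n+2)-2n)}{2(n+2)}$, in which the inequality to verify reduces, after inserting the explicit form of $K$ and $\la$, to
\[
c_\delta\cdot C\cdot\Bigl(\tfrac{1}{|B_1|}\iint_{Q_{2\rho_0}(z_0)}(H(z,|\na u|)+\delta^{-1}H(z,|F|))\,dz+1\Bigr)^{q-p}\cdot \rho^{\alpha-\frac{2(n+2)(q-p)}{p(n+2)-2n}}\le \tfrac{\epsilon}{3(2V)^{n+2}2^{2q}},
\]
and for $q-p$ strictly below the endpoint the $\rho$-exponent is strictly positive so $\rho_0$ small suffices directly; at the endpoint one instead chooses $\rho_0$ so small (using $\iint_{Q_{2\rho_0}(z_0)}H(z,|\na u|)\le\iint_{\Om_T}H(z,|\na u|)<\infty$ and dominated convergence) that the parenthetical factor, while bounded below by $1$, is combined with the $V=9K$ appearing in $(2V)^{n+2}$ on the right — and here the key cancellation is that $V=9K$, so $(2V)^{n+2}=18^{n+2}K^{n+2}$ on the right-hand side carries a power of $K$ that dominates the $K^{q-p}$ (since $q-p<1<n+2$) coming from $\la$. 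This is the crux: the factors of $K$ on both sides are arranged so that the net power of $K$ is favorable, and then the remaining constant is purely $(\data,\delta,\epsilon)$-dependent, so $\rho_0$ can be fixed last as stated. I would write the proof by making these two cancellations — the strictly-positive $\rho$-exponent away from the endpoint, and the $K$-power bookkeeping via $V=9K$ at the endpoint — explicit, then choosing $\rho_0$ accordingly.
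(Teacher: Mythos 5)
Your first half is sound and matches the paper's setup: the stopping-time identity \ref{b} plus the comparison $|Q_\rho^\la|=\la^{\frac{n(p-2)}{2}}|B_1|\rho^{n+2}$ gives $\la\le C'\rho^{-\frac{2(n+2)}{p(n+2)-2n}}$, and you correctly diagnose that with this $L^1$-based bound the exponent of $\rho$ in $\rho^\alpha\la^{q-p}$ is exactly $0$ at the admissible endpoint $q=p+\frac{\alpha(p(n+2)-2n)}{2(n+2)}$, so the naive argument dies there. But your proposed rescue of the endpoint case does not work. The factor $(2V)^{n+2}=18^{n+2}K^{n+2}$ sits in the \emph{denominator} of the right-hand side of the target inequality, so a larger $K$ makes the inequality \emph{harder}, not easier: your "favorable $K$-power bookkeeping" runs in the wrong direction, and you also concede yourself that the $+1$ inside \eqref{K_def} prevents the bracketed integral from tending to $0$ as $\rho_0\to0$. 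As written, the endpoint case — which is permitted by \eqref{range_pq} — is not proved.

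The missing idea is the gradient higher integrability, Theorem~\ref{higher}. Since $Q_{4R}(z_0)\subset\Om_T$, one has $\iint_{Q_{2R}(z_0)}(H(z,|\na u|))^{1+\varepsilon_0}\,dz\le c_R$, and applying H\"older's inequality with exponent $1+\varepsilon_0$ directly to the stopping-time identity \ref{b} upgrades your bound to
\begin{equation*}
\la^p\le c_R\,|Q_\rho^\la|^{-\frac{1}{1+\varepsilon_0}}\le c_R\Bigl(\la^{\frac{n(p-2)}{2}}\rho^{n+2}\Bigr)^{-\frac{1}{1+\varepsilon_0}},
\end{equation*}
i.e.\ the measure enters with the power $-\frac{1}{1+\varepsilon_0}$ rather than $-1$. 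Running your own exponent bookkeeping with this improved bound, the paper writes $\la^q=\la^{q-\frac{\alpha p}{n+2}}\la^{\frac{\alpha p}{n+2}}$ and obtains
\begin{equation*}
c_\delta\rho^\alpha\la^q\le c_\delta c_R\,\rho^{\frac{\alpha\varepsilon_0}{1+\varepsilon_0}}\,\la^{\,q-\frac{\alpha p}{n+2}+\frac{\alpha n(2-p)}{2(1+\varepsilon_0)(n+2)}},
\end{equation*}
where the surviving exponent of $\la$ is $\le p$ even at the endpoint (because dividing the nonnegative term $\frac{\alpha n(2-p)}{2(n+2)}$ by $1+\varepsilon_0$ only helps), and the strictly positive power $\rho^{\frac{\alpha\varepsilon_0}{1+\varepsilon_0}}$ — which is exactly the gain from $\varepsilon_0>0$ — is what lets you choose $\rho_0$ small to beat $c_\delta c_R$ and the $(2V)^{n+2}2^{2q}3/\epsilon$ factor uniformly, including at the endpoint. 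Without invoking Theorem~\ref{higher} your argument cannot close.
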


 \begin{proof}
   Since it is assumed $Q_{4R}(z_0)\subset \Om_T$, we apply Theorem~\ref{higher} to obtain
   \begin{align*}
       \begin{split}
    \fiint_{Q_{2R}(z_0)}(H(z,|\na u|))^{1+\varepsilon_0}\,dz
        &\le c\left(\fiint_{Q_{4R}(z_0)}H(z,|\na u|)\,dz\right)^{1+\frac{2q\varepsilon_0 }{p(n+2)-2n}}\\
        &\qquad+c\left(\fiint_{Q_{4R}(z_0)}(H(z,|F|))^{1+\varepsilon_0}\,dz+1\right)^{\frac{2q}{p(n+2)-2n}},
\end{split}
   \end{align*}
   where $\varepsilon_0=\varepsilon_0(\data)$ and $c=c(\data,\|a\|_{L^\infty(\Om_T)})$. Therefore we have
   \[
   \iint_{Q_{2R}(z_0)}(H(z,|\na u|))^{1+\varepsilon_0}\,dz\le c_R,
   \]
   where $c_R=c_R(\data,\| a\|_{L^\infty(\Om_T)}, \|H(z,|F|)\|_{L^{1+\varepsilon_0}(\Om_T) },R)$. On the other side, we deduce from \ref{b} and $Q_{\rho}^\la\subset Q_{2R}(z_0)$ that
   \begin{align*}
       \begin{split}
           \la^p
           &=\fiint_{Q_{\rho}^{\la}}\left(H(z,|\na u|)+\delta^{-1}H(z,|F|)\right)\,dz\\
           &\le \left(\fiint_{Q_{\rho}^{\la}}\left(H(z,|\na u|)+\delta^{-1}H(z,|F|)\right)^{1+\varepsilon_0}\,dz\right)^\frac{1}{1+\varepsilon_0}\\
           &\le c_R|Q_{\rho}^{\la}|^{-\frac{1}{1+\varepsilon_0}}\\
           &\le c_R \Bigl(\la^{\frac{n(p-2)}{2}}\rho^{n+2}\Bigr)^{-\frac{1}{1+\varepsilon_0}}.
       \end{split}
   \end{align*}
Thus we get
\[
\la^\frac{\alpha p}{n+2}=(\la^p)^\frac{\alpha}{n+2}\le c_R\Bigl(\la^{\frac{n(p-2)}{2}}\rho^{n+2}\Bigr)^{-\frac{\alpha}{(1+\varepsilon_0)(n+2)}}.
\]   
In order to reach the conclusion, we use the above inequality to get
\begin{align*}
    \begin{split}
        c_\delta\rho^\alpha\la^q
        &=c_\delta\rho^\alpha\la^{q-\frac{\alpha p}{n+2}} \la^\frac{\alpha p}{n+2}\\
        &\le c_\delta c_R\rho^{\frac{\alpha\varepsilon_0}{1+\varepsilon_0}}\la^{q-\frac{\alpha p}{n+2} +\frac{\alpha n(2-p)}{2(1+\varepsilon_0)(n+2)} }. 
    \end{split}
\end{align*}
Since it follows from \eqref{range_pq} that
\[
q-\frac{\alpha p}{n+2}+\frac{\alpha n (2-p)}{2(n+2)}=q-\frac{\alpha(p(n+2)-2n)}{2(n+2)}\le p,
\]
we have
\[
q-\frac{\alpha p}{n+2}+\frac{\alpha n (2-p)}{2(n+2)(1+\varepsilon_0)}\le p
\]
and thus 
\[
c_\delta\rho^\alpha\la^q\le c_\delta c_R \rho_0^{\frac{\alpha \varepsilon_0}{1+\varepsilon_0}} \la^p.
\]
The proof is completed if we take $\rho_0$ sufficiently small.
\end{proof}

We now start to construct maps to apply comparison estimates.
Consider the weak solution  
\[
\zeta\in C(I_{8V\rho};L^2(B_{8V\rho}^\la,\RR^N))\cap L^q(I_{8V\rho};W^{1,q}(B_{8V\rho}^\la,\RR^N))
\]
to the Dirichlet boundary problem
\begin{align*}
    \begin{cases}
        \zeta_t-\dv (b(z)\mathcal{A}(z,\na \zeta))=0&\text{in}\quad Q_{8V}^\la,\\
        \zeta=u-u_0&\text{on}\quad \pa_pQ_{8V\rho}^\la.
    \end{cases}
\end{align*}

\begin{lemma}\label{p_c1}
    There exist $\delta=\delta(\data,\epsilon)\in(0,1)$ and $\rho_0=\rho_0(\data,
    \|H(z,|F|)\|_{L^{1+\varepsilon_0}(\Om_T)},\delta,\epsilon)\in (0,1)$ such that   if $\rho\in(0,\rho_0)$, then  
    \[\frac{1}{|Q_{\rho}^\lambda|}\iint_{Q_{V\rho}^\la}H(z,|\na u-\na \zeta|)\,dz\le \frac{1}{2^{q}3}\epsilon\la^p.\]
Also, there exists $c_\delta=c(\data,\delta)$ such that
\[
\sup_{t\in I_{8V\rho}}\fint_{B^\la_{8V\rho}}\frac{|\zeta|^2(x,t)}{(8V\rho)^2}\,dx+\fiint_{Q_{8V\rho}^\la}\left(\frac{|\zeta|^p}{(8V\la^{\frac{p-2}{2}}\rho)^p} + H(z,|\na \zeta|)\right)\,dz\le c_\delta\la^p.
\]
\end{lemma}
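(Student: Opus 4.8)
\textbf{Proof strategy for Lemma~\ref{p_c1}.}

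The plan is to prove the two assertions in turn, since the energy bounds for $\zeta$ will in fact be used to derive the comparison estimate, although they are stated second. First I would establish the energy estimate for $\zeta$. The function $\zeta$ solves a homogeneous ($F=0$) double-phase system with the same operator $\mathcal{A}$, and the boundary datum on the parabolic boundary of $Q_{8V\rho}^\la$ is $u-u_0$. Testing the weak formulation of the $\zeta$-equation with $\zeta$ itself (more precisely, with a Steklov-averaged/mollified version to make the time derivative legitimate) and using the ellipticity \eqref{ellipticity}, one obtains
\[
\sup_{t\in I_{8V\rho}}\fint_{B^\la_{8V\rho}}|\zeta|^2(x,t)\,dx+\iint_{Q_{8V\rho}^\la}H(z,|\na\zeta|)\,dz\le c\iint_{Q_{8V\rho}^\la}H(z,|\na(u-u_0)|)\,dz+\text{(boundary terms)},
\]
so that after normalizing by the cylinder measure and the intrinsic scaling factors one reduces everything to bounding $\fiint_{Q_{8V\rho}^\la}H(z,|\na u|)\,dz$ and $\sup_{I_{8V\rho}}\fint_{B^\la_{8V\rho}}|u-u_0|^2/(8V\rho)^2\,dx$; both are controlled by $c_\delta\la^p$ via Lemma~\ref{en_lem} and Assumption~\ref{p_ass}\ref{a}. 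The $|\zeta|^p$-term is handled by interpolating between the $L^\infty_tL^2_x$ bound and the $L^p_t W^{1,p}_x$ bound (Gagliardo--Nirenberg in the intrinsic geometry), exactly as in Lemma~\ref{en_lem}; here the $p$-intrinsic scaling $B^\la_\rho=B_{\la^{(p-2)/2}\rho}$ is what makes the powers of $\la$ match up.

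For the comparison estimate, I would test the difference of the two weak formulations — the equation \eqref{eq} for $u$ and the homogeneous equation for $\zeta$ — with the test function $u-u_0-\zeta$ (again Steklov-regularized in time; the boundary condition $\zeta=u-u_0$ on $\pa_pQ_{8V\rho}^\la$ guarantees this is an admissible test function vanishing on the lateral boundary). The time-derivative terms produce a nonnegative contribution $\tfrac12\sup_t\fint|u-u_0-\zeta|^2$, and one is left with a monotonicity term $\iint_{Q_{8V\rho}^\la}\bigl(b(z)\mathcal{A}(z,\na u)-b(z)\mathcal{A}(z,\na\zeta)\bigr)\cdot(\na u-\na\zeta)\,dz$ on the left and the right-hand side $\iint_{Q_{8V\rho}^\la}\mathcal{A}(z,F)\cdot(\na u-\na\zeta)\,dz$. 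Using the standard vector-field monotonicity inequalities for $|\xi|^{p-2}\xi$ and $a(z)|\xi|^{q-2}\xi$ in the singular range $p<2$ (which bound $H(z,|\na u-\na\zeta|)$ from above by the monotonicity term times an appropriate power of $(|\na u|+|\na\zeta|)$, via Young's inequality), together with the $\zeta$-energy bound just proved and the $u$-energy bound, one estimates the left side from below by (a constant times) $\iint H(z,|\na u-\na\zeta|)\,dz$ up to a deficit, and the right side by $\delta^{-1}$-weighted $F$-terms plus absorbable pieces. The factor $\delta^{-1}H(z,|F|)$ built into the stopping-time conditions \ref{a}--\ref{b} is precisely what lets the $F$-contribution be bounded by $c\,\delta\,\la^p|Q_\rho^\la|$; choosing $\delta=\delta(\data,\epsilon)$ small absorbs the constant and yields the factor $\tfrac{1}{2^q3}\epsilon$. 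The role of $\rho_0$ is to invoke Lemma~\ref{p_lem}: wherever a term carrying an extra $\rho^\alpha$ appears (from the H\"older oscillation of $a$ or from perturbing $q$-growth terms into $p$-growth terms), Lemma~\ref{p_lem} converts $\rho^\alpha\la^q$ into a small multiple of $\epsilon\la^p$, provided $\rho<\rho_0$.

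The main obstacle is the handling of the singular structure $p<2$ together with the double-phase coupling. Unlike the degenerate case $p\ge2$ treated in \cite{K2024}, for $p<2$ the monotonicity inequality does not directly control $|\na u-\na\zeta|^p$; one only gets control of a quantity like $(|\na u|+|\na\zeta|)^{p-2}|\na u-\na\zeta|^2$, and recovering $\iint H(z,|\na u-\na\zeta|)\,dz$ requires an additional H\"older/Young splitting that reintroduces the energy quantities $\iint H(z,|\na u|)+H(z,|\na\zeta|)\,dz$ — hence the necessity of proving the $\zeta$-energy bound first and of having the normalization by $\la^p$ from the stopping-time argument. The second delicate point is bookkeeping the intrinsic scaling: every average is over a $p$-intrinsic cylinder $Q_{V\rho}^\la$ while the normalization is by $|Q_\rho^\la|$, so one must carefully track the factors $(2V)^{n+2}$ and $\la^{n(p-2)/2}$, and it is here that the precise constant $\tfrac{1}{(2V)^{n+2}2^{2q}3}$ in Lemma~\ref{p_lem} is tailored to close the estimate.
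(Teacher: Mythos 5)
Your overall route matches the paper's: you test the difference equation with $u-u_0-\zeta$ (Steklov--averaged), use the monotonicity of $\mathcal{A}$ and Young's inequality to arrive at
$\sup_t\fint|u-u_0-\zeta|^2+\fiint_{Q_{8V\rho}^\la}H(z,|\na u-\na\zeta|)\,dz\le c\fiint_{Q_{8V\rho}^\la}H(z,|F|)\,dz$, and then invoke the stopping--time condition \ref{a} to bound the right-hand side by $c\,\delta\,\la^p$. Your plan to prove the $\zeta$-energy bound by testing with $\zeta-(u-u_0)$ first is a legitimate variant; the paper instead gets it afterwards from the comparison inequality, the triangle inequality, Lemma~\ref{en_lem} and a spatial Poincar\'e inequality. (Minor point: you cannot test with ``$\zeta$ itself'' since $\zeta$ does not vanish on $\pa_pQ_{8V\rho}^\la$; the admissible test function is $\zeta-(u-u_0)$.)

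There is, however, a genuine gap in how you close the first estimate. After normalizing the integral over $Q_{V\rho}^\la$ by $|Q_\rho^\la|$ you pick up the factor $V^{n+2}=(9K)^{n+2}$, and $K$ in \eqref{K_def} contains $\delta^{-1}H(z,|F|)$, so the quantity you must make small is $c\,\delta K^{n+2}\sim \delta^{1-\alpha}\bigl(\iint_{Q_{2\rho_0}}H(z,|F|)\,dz+\dots\bigr)^{\alpha}$. Your step ``choosing $\delta$ small absorbs the constant'' therefore works only for $\alpha\in(0,1)$; for $\alpha=1$ the product $\delta K^{n+2}$ does not tend to $0$ as $\delta\to0$, and the paper resolves this by a case split: for $\alpha=1$ it bounds $\iint_{Q_{2\rho_0}}H(z,|F|)\,dz\le \|H(\cdot,|F|)\|_{L^{1+\varepsilon_0}(\Om_T)}|Q_{2\rho_0}|^{\varepsilon_0/(1+\varepsilon_0)}$ and takes $\rho_0$ small. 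This is in fact the reason $\rho_0$ (with its dependence on $\|H(z,|F|)\|_{L^{1+\varepsilon_0}(\Om_T)}$) appears in the statement of this lemma at all; your attribution of $\rho_0$'s role to Lemma~\ref{p_lem} is incorrect here, since Lemma~\ref{p_lem} is not used in this proof. Without this case analysis the claimed smallness $c\delta K^{n+2}\le \tfrac{1}{2^q3}\epsilon$ is circular.
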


\begin{proof}
    We apply the standard energy estimate in \cite[Lemma~3.4]{K2024}. Testing $u-u_0-\zeta$ to 
    \[
    (u-u_0-\zeta)_t -\dv(b(\mathcal{A}(z,\na u)-\mathcal{A}(z,\na \zeta)  ))=\dv\mathcal{A}(z,F)
    \]
    in $Q_{8V\rho}^\la$, there exists $c=c(n,N,p,q,\nu,L)$ such that
    \begin{align}\label{p_c1_1}
        \begin{split}
            &\frac{1}{|I_{8V\rho}|}\sup_{t\in I_{8V\rho}}\fint_{B^\la_{8V\rho}}|u-u_0-\zeta|^2(x,t)\,dx+\fiint_{Q_{8V\rho}^\la} H(z,|\na u-\na \zeta|)\,dz\\
            &\le c\fiint_{Q_{8V\rho}^\la}H(z,|F|)\,dz.
        \end{split}
    \end{align}
    At this point, we employ \ref{a} to the right hand side of \eqref{p_c1_1}. Then it follows
    \[
    \sup_{t\in I_{8V\rho}}\fint_{B^\la_{8V\rho}}\frac{|u-u_0-\zeta|^2(x,t)}{(8V\rho)^2}\,dx+\fiint_{Q_{8V\rho}^\la} H(z,|\na u-\na \zeta|)\,dz\le c\delta\la^p.
    \]
   On the other side, by using triangle inequality, we obtain
   \begin{align*}
       \begin{split}
           &\sup_{t\in I_{8V\rho}}\fint_{B^\la_{8V\rho}}\frac{|\zeta|^2}{(8V\rho)^2}\,dx+\fiint_{Q_{8V\rho}^\la}\left(\frac{|\zeta|^p}{(8V\la^{\frac{p-2}{2}}\rho)^p} + H(z,|\na \zeta|)\right)\,dz\\
           &\le  c\sup_{t\in I_{8V\rho}}\fint_{B^\la_{8V\rho}}\frac{|u-u_0|^2}{(8V\rho)^2}\,dx+c \fiint_{Q_{8V\rho}^\la}\left(\frac{|u-u_0|^p}{(8V\la^{\frac{p-2}{2}}\rho)^p} + H(z,|\na u|)\right)\,dz\\
           &\qquad+ c\la^{p-2}\sup_{t\in I_{8V\rho}}\fint_{B^\la_{8V\rho}}\frac{|u-u_0-\zeta|^2}{(8V\rho)^2}\,dx+c\fiint_{Q_{8V\rho}^\la} H(z,|\na u-\na \zeta|)\,dz\\
           &\qquad+c\fiint_{Q_{8V\rho}^\la}\frac{|u-u_0-\zeta|^p}{(8V\la^{\frac{p-2}{2}}\rho)^p}\,dz.
       \end{split}
   \end{align*}
   Thus, applying Lemma~\ref{en_lem} and Poincar\'e inequality in the spatial direction to absorb the last term into the former term, it follows that
   \begin{align*}
       \begin{split}
           &\sup_{t\in I_{8V\rho}}\fint_{B^\la_{8V\rho}}\frac{|\zeta|^2}{(8V\rho)^2}\,dx+\fiint_{Q_{8V\rho}^\la}\left(\frac{|\zeta|^p}{(8V\la^{\frac{p-2}{2}}\rho)^p} + H(z,|\na \zeta|)\right)\,dz\\
           &\le c_\delta\la^p+c\delta\la^p.
       \end{split}
   \end{align*}
    As $\delta\in(0,1)$, the second inequality in this lemma follows.

    To derive the first inequality of this lemma, we omit the first term of the left hand side in \eqref{p_c1_1} and write the remaining term by using \ref{a} as follows.
    \[
    \frac{1}{|Q_{\rho}^\la|}\iint_{Q_{V\rho}}H(z,|\na u-\na \zeta|)\,dz\le c \delta K^{n+2}\la^p,
    \] 
    where we used facts that $V=9K$ and the choice of $K$ in \eqref{K_def}.
    The proof is completed if $c\delta K^{n+2}$ is smaller than $\tfrac{1}{2^q3}\ep$. Observe that
    \begin{align*}
        \begin{split}
            &\frac{1}{180(1+[a]_\alpha)}\delta^\frac{1}{n+2} K\\
            &=\left(\frac{\delta^\frac{1}{\alpha}}{|B_1|}\iint_{Q_{2\rho_0}(z_0)}H(z,|\na u|)\,dz+\delta^\frac{1}{\alpha}+\delta^{\frac{1-\alpha}{\alpha}}\iint_{Q_{2\rho_0}(z_0)}H(z,|F|)\,dz\right)^\frac{\alpha}{n+2}.
        \end{split}
    \end{align*}
    Therefore, if $\alpha\in(0,1)$, then we take $\delta=\delta(\data)$ small enough to handle the term $c\delta K^{n+2}$ less than $\tfrac{1}{2^q3}\epsilon$. On the other hand, if $\alpha=1$, then the last term of the above display cannot be small by taking $\delta$ small enough. Meanwhile, the H\"older inequality implies
    \[
    \iint_{Q_{2\rho_0}(z_0)}H(z,|F|)\,dz\le \left(\iint_{\Om_T}(H(z,|F|))^{1+\varepsilon_0}\,dz\right)^\frac{1}{1+\varepsilon_0}|Q_{2\rho_0}|^{\frac{\varepsilon_0}{1+\varepsilon_0}}.
    \]
    Hence, the desired estimate follows by taking $\delta$ small enough and then $\rho_0$ small enough.
\end{proof}

In order to employ the regularity property of constructed map, we will apply the scaling argument in the intrinsic cylinder as in \cite{MR2286632}. Recalling a weak solution $\zeta$ to
\[
\zeta_t-\dv(b(z)\mathcal{A}(z,\na \zeta))=0\quad\text{in}\quad Q_{8V\rho}^\la,
\]
we set
  \begin{align}\label{p_scale}
  \begin{split}
      &\zeta_{\la}(x,t)=\frac{1}{\la^\frac{p}{2}\rho}\zeta\bigl(\la^{\frac{p-2}{2}}\rho x,\rho^2t\bigr),\\
      &b_\la(x,t)=b\bigl(\la^{\frac{p-2}{2}}\rho x,\rho^2t\bigr),\\
      &a_\la(x,t)=\la^{q-p}a\bigl(\la^{\frac{p-2}{2}}\rho x,\rho^2t\bigr),\\ 
      &\mA_\la(z,\xi)=|\xi|^{p-2}\xi+a_\la(z)|\xi|^{q-2}\xi,\\
      &H_\la(z,s)=s^p+a_\la(z)s^q.
  \end{split}
    \end{align}
    for $(x,t)\in Q_{8V}$.
Note that $b_\la(z)$ still satisfies the ellipticity condition \eqref{ellipticity}. 
\begin{lemma}\label{p_sc_lem}
    The scaled map $\zeta_\la$ is a weak solution to
    \[
    \partial_t\zeta_\la-\dv(b_\la(z)\mathcal{A}_\la(z,\na \zeta_\la))=0\quad\text{in}\quad Q_{8V}.
    \]
    Moreover, the function $a_\la$ is $(\alpha,\alpha/2)$-H\"older continuous with $[a_\la]_\alpha\le [a]_\alpha$ and
    \[
    H_\la(z,|\na \zeta_\la|)=\frac{1}{\la^p}H(z,|\na \zeta|).
    \]
\end{lemma}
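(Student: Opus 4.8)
The plan is to verify the three assertions of Lemma~\ref{p_sc_lem} by direct substitution into the definitions in \eqref{p_scale}, treating them in the order: H\"older regularity of $a_\la$, the pointwise identity for $H_\la$, and finally the PDE for $\zeta_\la$.

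\textbf{Step 1: H\"older continuity of $a_\la$.} From $a_\la(x,t)=\la^{q-p}a(\la^{(p-2)/2}\rho x,\rho^2 t)$ and the assumption $\la\ge1$, $q\ge p$, we have $\la^{q-p}\ge1$, but more importantly for the seminorm we compute, for $x,y$ in the relevant domain,
\[
|a_\la(x,t)-a_\la(y,t)|=\la^{q-p}|a(\la^{\frac{p-2}{2}}\rho x,\rho^2t)-a(\la^{\frac{p-2}{2}}\rho y,\rho^2t)|\le \la^{q-p}[a]_\alpha|\la^{\frac{p-2}{2}}\rho(x-y)|^\alpha,
\]
and similarly in time with exponent $\alpha/2$ and factor $\rho^2$. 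Since $p<2$ we have $\la^{\frac{p-2}{2}}\le1$ and $\rho<1$, so $|\la^{\frac{p-2}{2}}\rho(x-y)|^\alpha\le|x-y|^\alpha$; combined with $\la^{q-p}\le \la^{\frac{\alpha(p(n+2)-2n)}{2(n+2)}}=\la^{\alpha(1-\frac{p-2}{2}\cdot\frac{-n}{n+2})}$... here one must check that the extra $\la$-power is absorbed. Actually the cleanest route: by \eqref{range_pq}, $q-p\le \tfrac{\alpha(p(n+2)-2n)}{2(n+2)}=\alpha\cdot\tfrac{2-p}{2}\cdot\tfrac{n}{n+2}+\tfrac{\alpha p}{n+2}\cdot\tfrac12$... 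I would instead simply observe that the spatial scaling contributes a factor $(\la^{\frac{p-2}{2}})^\alpha=\la^{\frac{\alpha(p-2)}{2}}$ and we need $\la^{q-p}\la^{\frac{\alpha(p-2)}{2}}\le1$, i.e. $q-p\le\tfrac{\alpha(2-p)}{2}$. This is \emph{weaker} than the upper bound in \eqref{range_pq} since $\tfrac{\alpha(p(n+2)-2n)}{2(n+2)}=\tfrac{\alpha(2-p)}{2}-\tfrac{\alpha(2-p)}{n+2}+\tfrac{\alpha p}{n+2}\cdot\tfrac12$, which I will double-check; granting it, $[a_\la]_\alpha\le[a]_\alpha$ follows, and $a_\la\ge0$ is immediate.

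\textbf{Step 2: the identity for $H_\la$.} By the chain rule, $\na\zeta_\la(x,t)=\tfrac{1}{\la^{p/2}\rho}\cdot\la^{\frac{p-2}{2}}\rho\,(\na\zeta)(\la^{\frac{p-2}{2}}\rho x,\rho^2 t)=\la^{-1}(\na\zeta)(\cdots)$, so $|\na\zeta_\la|=\la^{-1}|\na\zeta|$ evaluated at the scaled point. Then
\[
H_\la(z,|\na\zeta_\la|)=|\na\zeta_\la|^p+a_\la(z)|\na\zeta_\la|^q=\la^{-p}|\na\zeta|^p+\la^{q-p}a(\cdots)\la^{-q}|\na\zeta|^q=\la^{-p}\bigl(|\na\zeta|^p+a(\cdots)|\na\zeta|^q\bigr),
\]
which is $\la^{-p}H(z,|\na\zeta|)$ as claimed (all evaluations of $\zeta,\na\zeta,a$ on the right at the point $(\la^{\frac{p-2}{2}}\rho x,\rho^2 t)$).

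\textbf{Step 3: the scaled PDE.} Here I would start from the weak formulation: $\zeta$ solves $\zeta_t-\dv(b\mathcal{A}(z,\na\zeta))=0$ in $Q_{8V\rho}^\la$, meaning $\iint -\zeta\cdot\varphi_t+b\mathcal{A}(z,\na\zeta)\cdot\na\varphi\,dz=0$ for all $\varphi\in C_0^\infty(Q_{8V\rho}^\la,\RR^N)$. Given a test function $\psi\in C_0^\infty(Q_{8V},\RR^N)$, set $\varphi(\tilde x,\tilde t)=\psi(\la^{-\frac{p-2}{2}}\rho^{-1}\tilde x,\rho^{-2}\tilde t)$, change variables $\tilde x=\la^{\frac{p-2}{2}}\rho x$, $\tilde t=\rho^2 t$, and track the Jacobian $\la^{\frac{n(p-2)}{2}}\rho^{n+2}$ together with the derivative factors. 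The time term produces $\partial_t\zeta_\la$ after dividing through by the common power of $\la$ and $\rho$; the spatial term, using $\mathcal{A}(z,\na\zeta)=\la^{p-1}(|\na\zeta_\la|^{p-2}\na\zeta_\la+a(\cdots)\la^{q-p}|\na\zeta_\la|^{q-2}\na\zeta_\la)=\la^{p-1}\mathcal{A}_\la(x,t,\na\zeta_\la)$ at the scaled point, and $b(\cdots)=b_\la$, produces $b_\la\mathcal{A}_\la(z,\na\zeta_\la)\cdot\na\psi$ up to the same scalar factor. Matching the two factors (they agree precisely because $\zeta_\la$ was normalized by $\la^{p/2}\rho$ — this is the point of that choice) gives the weak form of $\partial_t\zeta_\la-\dv(b_\la\mathcal{A}_\la(z,\na\zeta_\la))=0$ in $Q_{8V}$.

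\textbf{Main obstacle.} Steps 2 and 3 are routine bookkeeping; the only genuine point requiring care is Step 1, namely confirming that the bound on $q-p$ in \eqref{range_pq} is enough to kill the product $\la^{q-p}(\la^{\frac{p-2}{2}})^\alpha$ of the weight-rescaling factor and the space-contraction factor (and the analogous time factor $\la^{q-p}\la^{\frac{\alpha(p-2)}{4}}\le\la^{q-p}\la^{\frac{\alpha(p-2)}{2}}$, which is even easier since $\la\ge1$). I expect this to reduce to the elementary inequality $q-p+\tfrac{\alpha(p-2)}{2}\le0$, i.e. $q-p\le\tfrac{\alpha(2-p)}{2}$, which is implied by $q-p\le\tfrac{\alpha(p(n+2)-2n)}{2(n+2)}$ once one checks $\tfrac{p(n+2)-2n}{n+2}\le 2-p$, equivalently $p(n+2)-2n\le(2-p)(n+2)$, i.e. $2p(n+2)\le 2n+2(n+2)=4n+4$, i.e. $p\le\tfrac{2n+2}{n+2}$; since $p\le 2$ and $\tfrac{2n+2}{n+2}<2$, this needs the sharper input... so in fact one should keep the $\rho<1$ slack: the honest estimate is $\la^{q-p}(\la^{\frac{p-2}{2}}\rho)^\alpha\le \la^{q-p+\frac{\alpha(p-2)}{2}}$, and whether this is $\le1$ is exactly where the precise form of \eqref{range_pq}, together with the lower bound $p>\tfrac{2n}{n+2}$ forcing $p(n+2)-2n>0$, must be invoked — that verification is the crux of the lemma and I would present it as a short standalone computation.
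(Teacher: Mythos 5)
Steps 2 and 3 of your proposal are fine and coincide with what the paper does (the paper outsources the change-of-variables verification of the PDE to \cite[Lemma 3.5]{K2024}, but your computation is exactly that verification, and the normalization by $\la^{p/2}\rho$ matching the two scalar factors is indeed the point).

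Step 1, however, has a genuine gap, and your own arithmetic exposes it. You try to prove $[a_\la]_\alpha\le[a]_\alpha$ from the exponent range \eqref{range_pq} alone, which reduces to $\la^{q-p+\frac{\alpha(p-2)}{2}}\le1$, i.e.\ $q-p\le\tfrac{\alpha(2-p)}{2}$; as you correctly compute, \eqref{range_pq} only gives this when $p\le\tfrac{2n+2}{n+2}$, which fails for $p$ near $2$. Discarding $\rho^\alpha$ via ``$\rho<1$'' and hoping the remaining $\la$-power is $\le1$ cannot work. Worse, in the time direction the scaling is $t\mapsto\rho^2t$ with \emph{no} $\la$-factor at all (the $p$-intrinsic cylinder \eqref{def_p_cylinder} only shrinks space by $\la^{\frac{p-2}{2}}$), so your claimed time factor $\la^{q-p}\la^{\frac{\alpha(p-2)}{4}}$ is wrong and the honest time seminorm factor is exactly $\la^{q-p}\rho^\alpha$, with no compensating negative power of $\la$ whatsoever. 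The missing ingredient is that $\rho$ and $\la$ are \emph{not} independent: the stopping-time identity \ref{b} together with the higher integrability (Theorem~\ref{higher}) forces $\rho$ to be small in a $\la$-dependent way, and this is packaged in Lemma~\ref{p_lem}, which yields $c_\delta\rho^\alpha\la^q\le\epsilon\la^p$, hence $\la^{q-p}\rho^\alpha\le1$ for $\rho<\rho_0$. The paper's proof is one line: $[a_\la]_\alpha\le\la^{q-p}\rho^\alpha[a]_\alpha\le[a]_\alpha$ by Lemma~\ref{p_lem}. That coupling — not a standalone exponent computation in $\la$ — is where \eqref{range_pq} actually enters (inside the proof of Lemma~\ref{p_lem}), and it is the piece your argument is missing.
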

\begin{proof}
    From \eqref{coeff} and the scaling setting, it is easy to see $a_\la(z)$ is $(\alpha,\alpha/2)$-H\"older continuity and we also have
\[
[a_\la]_\alpha=\la^{q-p}\rho^\alpha[a]_\alpha\le [a]_\alpha,
\]
where we used Lemma~\ref{p_lem}. Also, the identity
\[
\fiint_{Q_{8V}}H_\la(z,|\na \zeta_\la|)\,dz=\frac{1}{\la^p}\fiint_{Q_{8V\rho}^\la}H(z,|\na \zeta|)\,dz
    \]
    directly follows from the scaling argument. Finally, the solvability of PDE is proved in \cite[Lemma 3.5]{K2024} as it is enough to replace $\rho$ in the reference by $\la^\frac{p-2}{2}\rho$ for the setting of this paper.
\end{proof}

Nevertheless, \eqref{eq} is the double-phase system, it is invariant under the scaling argument in the $p$-intrinsic cylinder with Assumption~\ref{p_ass}. We apply it to obtain the proper quantitative estimate of the higher integrability of $\zeta$.
\begin{lemma}\label{p_high}
  There exist $\varepsilon_\delta=\varepsilon(\data,\delta)$   and $c_\delta=c(\data,\delta)$ such that 
  \[
  \fiint_{Q_{4V\rho}^\la}(H(z,|\na \zeta|))^{1+\varepsilon_\delta}\,dz\le c_\delta \la^{p(1+\varepsilon_\delta)}.
  \]
\end{lemma}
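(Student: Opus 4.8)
The plan is to apply the higher integrability result of Theorem~\ref{higher} to the scaled solution $\zeta_\la$ and then undo the scaling. First I would invoke Lemma~\ref{p_sc_lem}, which guarantees that $\zeta_\la$ is a weak solution of the double-phase system
\[
\partial_t\zeta_\la-\dv(b_\la(z)\mA_\la(z,\na\zeta_\la))=0\quad\text{in}\quad Q_{8V},
\]
with $a_\la$ being $(\alpha,\alpha/2)$-H\"older continuous, $[a_\la]_\alpha\le[a]_\alpha$, and with the crucial scaling identity $H_\la(z,|\na\zeta_\la|)=\la^{-p}H(z,|\na\zeta|)$. Theorem~\ref{higher} applies to homogeneous solutions (take $F\equiv0$) on any pair of concentric standard parabolic cylinders contained in $Q_{8V}$; applying it on $Q_{4V}\subset Q_{8V}$ yields an $\varepsilon_\delta=\varepsilon(\data,\delta)$ (the dependence on $\delta$ enters only through $\|a_\la\|_{L^\infty}$, which is controlled via $a_\la=\la^{q-p}\rho^\alpha\cdot(\text{rescaled }a)$ together with Lemma~\ref{p_lem}) and a constant $c_\delta=c(\data,\delta)$ such that
\[
\fiint_{Q_{4V}}(H_\la(z,|\na\zeta_\la|))^{1+\varepsilon_\delta}\,dz
\le c_\delta\left(\fiint_{Q_{8V}}H_\la(z,|\na\zeta_\la|)\,dz\right)^{1+\frac{2q\varepsilon_\delta}{p(n+2)-2n}}+c_\delta.
\]

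Next I would estimate the right-hand side. Using the scaling identity and the second inequality of Lemma~\ref{p_c1} (the energy bound for $\zeta$), we have
\[
\fiint_{Q_{8V}}H_\la(z,|\na\zeta_\la|)\,dz=\frac{1}{\la^p}\fiint_{Q_{8V\rho}^\la}H(z,|\na\zeta|)\,dz\le c_\delta,
\]
so the whole right-hand side is bounded by a constant $c_\delta=c(\data,\delta)$ (here one also absorbs the additive $+c_\delta$ coming from Theorem~\ref{higher}; note $\la>1$ so powers of $\la^p$ only help). Then I would translate the left-hand side back: by the scaling change of variables and the identity in Lemma~\ref{p_sc_lem},
\[
\fiint_{Q_{4V}}(H_\la(z,|\na\zeta_\la|))^{1+\varepsilon_\delta}\,dz
=\frac{1}{\la^{p(1+\varepsilon_\delta)}}\fiint_{Q_{4V\rho}^\la}(H(z,|\na\zeta|))^{1+\varepsilon_\delta}\,dz,
\]
since the normalized integral averages match under the affine change of variables defining the $p$-intrinsic cylinder. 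Multiplying through by $\la^{p(1+\varepsilon_\delta)}$ gives exactly the claimed bound
\[
\fiint_{Q_{4V\rho}^\la}(H(z,|\na\zeta|))^{1+\varepsilon_\delta}\,dz\le c_\delta\la^{p(1+\varepsilon_\delta)}.
\]

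The main obstacle, and the only genuinely delicate point, is verifying that the parameters $\varepsilon_\delta$ and $c_\delta$ produced by Theorem~\ref{higher} depend only on $\data$ and $\delta$ and not on $\om$, $\la$, $\rho$, or $z_0$. This hinges on showing $\|a_\la\|_{L^\infty(Q_{8V})}$ is bounded by a constant depending only on $\data$ and $\delta$: from \eqref{p_scale}, $\|a_\la\|_{L^\infty}\le\la^{q-p}\rho^\alpha\|a\|_{L^\infty(\Om_T)}$, and Lemma~\ref{p_lem} (applied with an appropriate choice of $c_\delta$) forces $\rho^\alpha\la^{q-p}$ — equivalently $[a_\la]_\alpha/[a]_\alpha$ — to be small, hence $\|a_\la\|_{L^\infty}\le c(\data,\delta)$; this is precisely the scaling-invariance of the double-phase structure under the $p$-intrinsic geometry that the surrounding discussion emphasizes. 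Once this uniformity is in place, the rest is a bookkeeping exercise in undoing the rescaling, and the fact that $\la>1$ together with $2-p\ge0$ makes every stray power of $\la$ work in our favor when absorbing constants.
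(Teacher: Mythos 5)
Your overall strategy is the same as the paper's: rescale via Lemma~\ref{p_sc_lem}, apply Theorem~\ref{higher} to $\zeta_\la$ with $F\equiv 0$, bound the right-hand side by the energy estimate of Lemma~\ref{p_c1}, and scale back. The scaling bookkeeping at the end is correct. However, the one step you yourself flag as ``the only genuinely delicate point'' --- the uniform bound on $\|a_\la\|_{L^\infty(Q_{8V})}$ --- is handled incorrectly, and this is a genuine gap.

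From \eqref{p_scale} one only gets $\|a_\la\|_{L^\infty(Q_{8V})}=\la^{q-p}\sup_{Q^\la_{8V\rho}(\om)}a\le \la^{q-p}\|a\|_{L^\infty(\Om_T)}$; the factor $\rho^\alpha$ you insert is not there. The factor $\rho^\alpha$ appears only in the H\"older \emph{seminorm}, $[a_\la]_\alpha=\la^{q-p}\rho^\alpha[a]_\alpha$, because the seminorm sees the contraction of the spatial variable; the sup norm does not. Since $q>p$ and $\la$ is unbounded (it grows with the level $\La$), the inequality you wrote gives no uniform control, and Lemma~\ref{p_lem} cannot rescue it because that lemma controls $\rho^\alpha\la^{q-p}$, i.e.\ exactly the seminorm and not the sup norm. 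The ingredient you are missing is the $p$-intrinsic phase condition \ref{i}, $K^2\la^p\ge a(\om)\la^q$, which is the whole point of the phase decomposition: it gives $a_\la(\om)=\la^{q-p}a(\om)\le K^2$ at the center, and then
\[
\|a_\la\|_{L^\infty(Q_{8V})}\le a_\la(\om)+[a_\la]_\alpha (8V)^\alpha\le K^2+8V[a]_\alpha,
\]
where the second term is controlled precisely because $[a_\la]_\alpha\le[a]_\alpha$ by Lemma~\ref{p_lem}. This combination of \ref{i} (for the value at the center) and the rescaled H\"older continuity (for the oscillation) is what the paper does and what your argument needs; without invoking \ref{i} the constants $\varepsilon_\delta$ and $c_\delta$ produced by Theorem~\ref{higher} cannot be made independent of $\la$.
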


\begin{proof}
    Recalling the center point of $Q_{8V}^\la$ and $Q_{8V}$ is $\om$, we observe from \ref{i} and Lemma~\ref{p_sc_lem} that
    \begin{align*}
        \begin{split}
            \|a_\la\|_{L^\infty(Q_{8V})}
            &\le a_\la(\om)+[a_\la]_{\alpha}(8V)^\alpha\\
            &\le \la^{q-p}a(\om)+8V[a]_{\alpha}\\
            &\le K^2+8V[a]_{\alpha}.
        \end{split}
    \end{align*}
    On the other hand, it follows from Lemma~\ref{p_c1} and Lemma~\ref{p_sc_lem} that
    \[
    \fiint_{Q_{8V}}H(z,|\na\zeta_\la|)\,dz\le c_\delta=c(\data,\delta),
    \]
    We now apply Theorem~\ref{higher} to $\zeta_\la$. Then we have
    \begin{align*}
        \begin{split}
            \fiint_{Q_{4V}}(H_\la(z,|\na \zeta_\la|))^{1+\varepsilon_\delta}\,dz
            &\le c_{\delta}\left(\fiint_{Q_{8V}}H_\la(z,|\zeta_\la|)\,dz\right)^{1+\frac{2q\varepsilon_\delta}
            {p(n+2)-2n)}}\\
            &\le c_\delta,
        \end{split}
    \end{align*}
    where $c_\delta=c(\data,\delta)$ and $\varepsilon_\delta=\varepsilon(\data,\delta)$. By scaling back, we conclude
    \[
    \fiint_{Q_{4V\rho}^\la}(H(z,|\na \zeta|))^{1+\varepsilon_\delta}\,dz\le c_\delta\la^{1+\varepsilon_\delta}.
    \]
    This completes the proof.
\end{proof}

The second map we construct is the weak solution to
\begin{align*}
    \begin{cases}
        \eta_t-\dv( b_0\mathcal{A}(z,\na \eta) )=0&\text{in}\quad Q_{4V}^\la,\\
        \eta=\zeta&\text{on}\quad\pa_pQ_{4V}^\la,
    \end{cases}
\end{align*}
where we have set
\[
b_0=(b)_{Q_{4V\rho}^\la}=(b)_{Q_{4V\rho_\om}^{\la_\om}}(\om).
\]
The following comparison estimate is a consequence of Lemma~\ref{p_high}.

\begin{lemma}\label{p_c2}
        There exists $\rho_0=\rho_0(\data,\delta,\epsilon)\in(0,1)$ such that   if $\rho\in(0,\rho_0)$, then  
    \[\frac{1}{|Q_{\rho}^\lambda|}\iint_{Q_{V\rho}^\la}H(z,|\na \zeta-\na \eta|)\,dz\le \frac{1}{2^{2q}3}\epsilon\la^p.\]
    Also, there exists $c_\delta=c(\data,\delta)$ such that 
    \[
    \sup_{t\in I_{4V\rho}}\fint_{B_{4V\rho}^\la}\frac{|\eta|^2(x,t)}{(4V\rho)^2}\,dx+\fiint_{Q_{4V\rho}^\la}\left(\frac{|\eta|^p}{(4V\la^\frac{p-2}{2}\rho)^p}+H(z,|\na \eta|)\right)\,dz\le c_\delta\la^p.\]
\end{lemma}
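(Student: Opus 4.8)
The plan is to run the classical two-step comparison scheme, now between the $\mA$-harmonic map $\zeta$ (with VMO coefficient $b(z)$) and the $\mA$-harmonic map $\eta$ with the averaged coefficient $b_0$, all phrased in the $p$-intrinsic cylinder $Q^\la_{4V\rho}$. The starting point is the standard energy estimate for the difference: test the equation for $\zeta-\eta$,
\[
(\zeta-\eta)_t-\dv\bigl(b_0(\mA(z,\na\zeta)-\mA(z,\na\eta))\bigr)=\dv\bigl((b(z)-b_0)\mA(z,\na\zeta)\bigr),
\]
with $\zeta-\eta$ over $Q^\la_{4V\rho}$. Using monotonicity of $\mA$ on the left and Young's inequality on the right, this controls $\fiint_{Q^\la_{4V\rho}}H(z,|\na\zeta-\na\eta|)\,dz$ by $\fiint_{Q^\la_{4V\rho}}|b(z)-b_0|\,|\mA(z,\na\zeta)|\,|\na\zeta|\,dz$, i.e. essentially by a weighted integral of $|b-b_0|\,H(z,|\na\zeta|)$. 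The second, energy, conclusion for $\eta$ then follows exactly as in Lemma~\ref{p_c1}: combine the difference estimate just obtained with the bounds for $\zeta$ from Lemma~\ref{p_c1} via the triangle inequality and a spatial Poincar\'e inequality to absorb the $|\eta|^p$ term, all on the slightly smaller cylinder $Q^\la_{4V\rho}\subset Q^\la_{8V\rho}$.

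For the first (quantitative) inequality the key gain comes from Lemma~\ref{p_high}: the higher integrability $\fiint_{Q^\la_{4V\rho}}(H(z,|\na\zeta|))^{1+\varepsilon_\delta}\le c_\delta\la^{p(1+\varepsilon_\delta)}$ lets me split $\int|b-b_0|H(z,|\na\zeta|)$ by H\"older with exponents $\tfrac{1+\varepsilon_\delta}{\varepsilon_\delta}$ and $1+\varepsilon_\delta$, producing a factor $\bigl(\fiint|b-b_0|^{(1+\varepsilon_\delta)/\varepsilon_\delta}\bigr)^{\varepsilon_\delta/(1+\varepsilon_\delta)}$ times $c_\delta\la^p$. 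Since $b$ is bounded between $\nu$ and $L$, $|b-b_0|^{(1+\varepsilon_\delta)/\varepsilon_\delta}\le (2L)^{1/\varepsilon_\delta}|b-b_0|$ up to constants, so the oscillation factor is bounded by $c_\delta\bigl(\fiint_{Q^\la_{4V\rho}}|b-b_0|\,dz\bigr)^{\varepsilon_\delta/(1+\varepsilon_\delta)}$. Here the $p$-intrinsic cylinder $Q^\la_{4V\rho}=B_{\la^{(p-2)/2}4V\rho}\times I_{4V\rho}$ is a parabolic cylinder of the form $B_r\times I$ with $|I|\le 2r^2$ whenever $\la\ge 1$ (indeed $\la^{(p-2)/2}\le 1$ since $p\le 2$, so the time radius dominates the spatial one, which is the orientation the VMO hypothesis \eqref{vmo} is stated for), and its spatial radius tends to $0$ as $\rho\to0$ uniformly in $\la\ge1$; hence \eqref{vmo} forces $\fiint_{Q^\la_{4V\rho}}|b-b_0|\,dz\to0$ as $\rho_0\to0$, and choosing $\rho_0$ small makes the whole right side $\le \tfrac{1}{2^{2q}3}\epsilon\la^p$.

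The step I expect to be the main obstacle is making the oscillation control genuinely uniform in $\la\ge 1$: the cylinder $Q^\la_{4V\rho}$ has its shape depending on $\la$ through the factor $\la^{(p-2)/2}$, so one must be careful that the supremum over admissible $B_r\times I$ in \eqref{vmo} really absorbs all these shapes with a single modulus of continuity, using that $r=\la^{(p-2)/2}4V\rho\le 4V\rho<4V\rho_0$ and $|I|=2(4V\rho)^2\le 2(4V\rho_0)^2$; one should phrase \eqref{vmo} over cylinders of the form $B_r(x)\times I$ with $|I|\le 2r^2$ and note that here $|I|=2(4V\rho)^2\ge 2r^2$, which requires instead taking $r=4V\rho$ (the time radius) and observing $B_{\la^{(p-2)/2}4V\rho}\subset B_{4V\rho}$, so $|(b)_{Q^\la}-(b)_{B_{4V\rho}\times I}|$ and $\fiint_{Q^\la}|b-(b)_{B_{4V\rho}\times I}|$ are both controlled by the VMO modulus at scale $4V\rho$ up to a fixed multiplicative constant $(\la^{(p-2)/2})^{-n}\le$ nothing uniform — so in fact one compares on $B_{4V\rho}\times I$ and pays a factor $|B_{4V\rho}|/|B^\la_{4V\rho}|=\la^{n(2-p)/2}$, which must be re-absorbed. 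The clean way around this is to use that \ref{b} and Lemma~\ref{p_c1} already bound $\fiint_{Q^\la_{\rho}}H(z,|\na\zeta|)\lesssim_\delta\la^p$ and to note $\fiint_{Q^\la_{4V\rho}}|b-b_0|\,dz$ is bounded by the VMO modulus evaluated at the spatial scale $\la^{(p-2)/2}4V\rho\le 4V\rho_0$ with time interval of length $2(4V\rho)^2$; since $2(4V\rho)^2 = 2(\la^{(2-p)/2})^2(\la^{(p-2)/2}4V\rho)^2$ and $\la^{2-p}\ge 1$, the ratio $|I|/r^2$ can be large, so one enlarges the spatial ball to $B_{4V\rho}$, writes $\fiint_{Q^\la_{4V\rho}}|b-b_0| \le 2\fiint_{Q^\la_{4V\rho}}|b-(b)_{B_{4V\rho}\times I}| \le 2\la^{n(2-p)/2}\fiint_{B_{4V\rho}\times I}|b-(b)_{B_{4V\rho}\times I}|$, and finally absorbs $\la^{n(2-p)/2}$ using the reverse-H\"older gain of Lemma~\ref{p_high} together with \ref{b} exactly as in the proof of Lemma~\ref{p_lem} — trading a small power of $\rho_0$ against the power of $\la$. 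After this bookkeeping, choosing $\rho_0=\rho_0(\data,\delta,\epsilon)$ small enough completes the proof.
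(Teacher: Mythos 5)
Your main argument is exactly the paper's proof: test the difference equation with $\zeta-\eta$, split the right-hand side by Young's inequality and absorb the term $|b-b_0|H(z,|\na\zeta-\na\eta|)$ using $|b-b_0|\le 2L$, apply H\"older with exponents $\tfrac{1+\varepsilon_\delta}{\varepsilon_\delta}$ and $1+\varepsilon_\delta$ together with Lemma~\ref{p_high}, reduce $|b-b_0|^{(1+\varepsilon_\delta)/\varepsilon_\delta}$ to $(2L)^{1/\varepsilon_\delta}|b-b_0|$, and make $\fiint_{Q^\la_{4V\rho}}|b-b_0|\,dz$ small via \eqref{vmo}; the energy bound for $\eta$ then follows by the triangle inequality from Lemma~\ref{p_c1} as you describe. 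On the core of the lemma there is nothing to object to.

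The long final paragraph, however, contains a claim that does not hold as stated. You correctly observe that for $p\le 2$ and $\la\ge 1$ the cylinder $Q^\la_{4V\rho}=B_{\la^{(p-2)/2}4V\rho}\times I_{4V\rho}$ has $|I_{4V\rho}|=2\la^{2-p}r^2\ge 2r^2$ with $r$ its spatial radius, so it is not literally of the form admitted in \eqref{vmo} (the paper is silent on this point, which is inherited from the degenerate case where the inequality goes the right way). But your proposed repair --- enlarging the ball to $B_{4V\rho}$, paying the volume ratio $\la^{n(2-p)/2}$, and then ``absorbing'' that factor through the $\la$--$\rho$ trade-off of Lemma~\ref{p_lem} --- cannot work: Lemma~\ref{p_lem} only yields $\la^{n(2-p)/2}\le c_R\,\rho^{-\theta}$ for some $\theta>0$, so you are left needing $\rho^{-\theta}\omega(4V\rho)$ to be small, where $\omega$ is the VMO modulus; VMO carries no decay rate, so this product need not even stay bounded. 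A genuine fix must avoid multiplying the oscillation by any positive power of $\la$ (or negative power of $\rho$), e.g.\ by slicing $I_{4V\rho}$ into subintervals of length $\le 2(\la^{(p-2)/2}4V\rho)^2$ and comparing with a piecewise-in-time average, or by reading \eqref{vmo} as a hypothesis uniform over all sub-cylinders $B_{r'}(x)\times I$ with $r'\le r$ and $|I|\le 2r^2$. Since the paper's own proof simply invokes \eqref{vmo} at this point, your write-up matches it modulo this (unsuccessful, and strictly speaking unnecessary for reproducing the paper's argument) attempted patch.
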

\begin{proof}
    By taking $\zeta-\eta$ as a test function to 
    \[
    \partial_t(\zeta-\eta)-\dv( b_0(\mathcal{A}(z,\na \zeta)-\mathcal{A}(z,\na \eta)  ) )=- \dv(  (b_0-b)\mathcal{A}(z,\na \zeta)  )
    \]
    in $Q_{4V\rho}^\la$ as in Lemma~\ref{p_c1}, we obtain
    \begin{align}\label{p_c2_1}
        \begin{split}
            &\sup_{t\in I_{4V\rho}}\fint_{B_{4V\rho}^\la}\frac{|\zeta-\eta|^2(x,t)}{(4V\rho)^2}\,dx+\fiint_{Q_{4V\rho}^\la}H(z,|\na \zeta-\na \eta|)\,dz\\
        &\le c\fiint_{Q_{4V\rho}^\la}|b_0-b(z)||\mA(z,\na \zeta)||\na \zeta-\na \eta|\,dz,
        \end{split}
    \end{align}
    where $c=c(n,N,p,q,\nu,L)$.
To estimate further, we apply Young's inequality for each $p$-Laplace part and $q$-Laplace part of $\mathcal{A}(z,\na \zeta)$. Then there holds
\begin{align*}
    \begin{split}
        &c\fiint_{Q_{4V\rho}^\la}|b_0-b(z)||\mA(z,\na \zeta)||\na \zeta-\na \eta|\,dz\\
        &\le c\fiint_{Q_{4V\rho}^\la}|b_0-b(z)||H(z,\na\zeta)|\,dz\\
        &\qquad+\frac{1}{4L}\fiint_{Q_{4V\rho}^\la}|b_0-b(z)|H(z,\na \zeta-\na \eta)\,dz.
    \end{split}
\end{align*}
Since $|b_0-b(z)|\le 2L$ holds from \eqref{ellipticity}, the last term of the above display can be absorbed into the left hand side of \eqref{p_c2_1}. Therefore it suffices to estimate the first term on the right hand side of the above display. We apply H\"older inequality and Lemma~\ref{p_high} to have
\begin{align*}
    \begin{split}
        &\fiint_{Q_{4V\rho}^\la}|b_0-b(z)||H(z,\na\zeta)|\,dz\\
        &\le \left(\fiint_{Q_{4V\rho}^\la}|b_0-b(z)|^\frac{1+\varepsilon_\delta}{\varepsilon_\delta}\,dz \right)^\frac{\varepsilon_\delta}{1+\varepsilon_\delta}\left(\fiint_{Q_{4V\rho}^\la}(H(z,\na\zeta))^{1+\varepsilon_\delta}\,dz \right)^{\frac{1}{1+\varepsilon_\delta}}\\
        &\le c_\delta \la^p\left(\fiint_{Q_{4V\rho}^\la}|b_0-b(z)|^\frac{1+\varepsilon_\delta}{\varepsilon_\delta}\,dz \right)^\frac{\varepsilon_\delta}{1+\varepsilon_\delta}.
    \end{split}
\end{align*}
    Since we have
    \[
    |b_0-b(z)|^\frac{1+\varepsilon_\delta}{\varepsilon_\delta}\le (2L)^\frac{1}{\varepsilon_\delta}|b_0-b(z)|,
    \]
   we employ \eqref{vmo} to take $\rho_0$ depending on $\data$ and $\delta$. Then \eqref{p_c2_1} becomes
   \begin{align*}
        \begin{split}
            &\sup_{t\in I_{4V\rho}}\fint_{B_{4V\rho}^\la}\frac{|\zeta-\eta|^2(x,t)}{(4V\rho)^2}\,dx+\fiint_{Q_{4V\rho}^\la}H(z,|\na \zeta-\na \eta|)\,dz\\
        &\le c_\delta \la^p\left(\fiint_{Q_{4V\rho}^\la}|b_0-b(z)|\,dz \right)^\frac{\varepsilon_\delta}{1+\varepsilon_\delta}\\
        &\le \frac{1}{(4V)^{n+2}2^{2q}3}\ep\la^{p}.
        \end{split}
   \end{align*}
   Therefore, the conclusion follows.    
\end{proof}

The regularity property we use for the next comparison estimate is a local $L^q$ estimate of $\na \eta$ by using $L^p$ norm of $\na \eta$. For this, we again adopt the scaling argument.
\begin{lemma}\label{p_lq_est}
    There exists $c_\delta=c(\data,\delta)$ such that
    \[
    \fiint_{Q_{2V\rho}^\la} |\na \eta|^q\,dz\le c_\delta \la^q.
    \]
\end{lemma}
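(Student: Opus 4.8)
The plan is to scale $\eta$ down to the unit cylinder exactly as in \eqref{p_scale} and reduce the claim to a standard higher-integrability-of-the-gradient estimate for the rescaled equation, then scale back. First I would set
\[
\eta_\la(x,t)=\frac{1}{\la^\frac{p}{2}\rho}\eta\bigl(\la^\frac{p-2}{2}\rho x,\rho^2 t\bigr),\qquad (x,t)\in Q_{4V},
\]
and, just as in Lemma~\ref{p_sc_lem}, check that $\eta_\la$ is a weak solution of $\partial_t\eta_\la-\dv(b_0\mathcal{A}_\la(z,\na\eta_\la))=0$ in $Q_{4V}$ with the scaled coefficient $a_\la$, which is still $(\alpha,\alpha/2)$-H\"older continuous with $[a_\la]_\alpha\le[a]_\alpha$ (using Lemma~\ref{p_lem}), and with $\|a_\la\|_{L^\infty(Q_{4V})}\le K^2+4V[a]_\alpha$ by the argument already used in Lemma~\ref{p_high}. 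The second energy bound in Lemma~\ref{p_c2}, after rescaling, gives
\[
\fiint_{Q_{4V}}H_\la(z,|\na\eta_\la|)\,dz\le c_\delta,\qquad\text{in particular}\qquad \fiint_{Q_{4V}}|\na\eta_\la|^p\,dz\le c_\delta .
\]

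Next I would invoke a self-improving $L^q$ gradient estimate for the homogeneous constant-coefficient problem $\partial_t\eta_\la-\dv(b_0\mathcal{A}_\la(z,\na\eta_\la))=0$: because $a_\la$ is H\"older continuous, bounded and nonnegative, and $b_0$ is a fixed constant satisfying the ellipticity bounds, one has a local higher integrability estimate of the form
\[
\fiint_{Q_{2V}}|\na\eta_\la|^q\,dz\le c\Bigl(\fiint_{Q_{4V}}H_\la(z,|\na\eta_\la|)\,dz+1\Bigr)^{\theta}
\]
for some $\theta=\theta(\data)$ and $c=c(\data,\|a_\la\|_{L^\infty},[a_\la]_\alpha)$; such an estimate is exactly the type of gradient higher integrability already available for the double-phase system (it is the analogue of Theorem~\ref{higher} for the homogeneous equation, or can be quoted from the references on parabolic double-phase regularity such as \cite{MR4718687,KKM}). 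Combining this with the bound $\fiint_{Q_{4V}}H_\la(z,|\na\eta_\la|)\,dz\le c_\delta$ yields $\fiint_{Q_{2V}}|\na\eta_\la|^q\,dz\le c_\delta$. Scaling back, since $|\na\eta_\la|^q=\la^{-\frac{pq}{2}}\rho^{-q}|\na\eta|^q$ pulled back through the change of variables $(x,t)\mapsto(\la^\frac{p-2}{2}\rho x,\rho^2 t)$ reproduces $\fiint_{Q_{2V\rho}^\la}|\na\eta|^q\,dz=\la^q\fiint_{Q_{2V}}|\na\eta_\la|^q\,dz$ (the scaling factor $\la^{q-p}$ hidden in $a_\la$ is precisely what makes $a_\la$ stay bounded, and one tracks the homogeneity of the $q$-power carefully), we obtain $\fiint_{Q_{2V\rho}^\la}|\na\eta|^q\,dz\le c_\delta\la^q$, which is the claim.

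The main obstacle is making sure the scaling is truly consistent: one must verify that the $q$-power of the gradient scales with the same factor $\la^q$ that appears on the right-hand side, and this works only because $a_\la=\la^{q-p}\rho^\alpha a(\cdot)$ keeps $\|a_\la\|_{L^\infty}$ under control via the $p$-intrinsic relation $\la^{q-p}a(\om)\le K^2$ from \ref{i} together with Lemma~\ref{p_lem} (ensuring $\rho^\alpha\la^{q-p}[a]_\alpha\le[a]_\alpha$). A secondary point is that the cited local $L^q$ estimate for $\na\eta_\la$ must be of a form whose constant depends on $\|a_\la\|_{L^\infty}$ only through an \emph{a priori} bound (here $K^2+4V[a]_\alpha$, which is controlled by $\data$ and the already-fixed quantities), so that the final constant is genuinely of the form $c_\delta=c(\data,\delta)$; one has to be careful that no hidden dependence on $\la$ or $\rho$ sneaks in through the coefficient norm.
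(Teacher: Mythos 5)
Your overall architecture (rescale to $Q_{4V}$ via \eqref{p_scale}, verify that $\eta_\la$ solves the constant-$b_0$ equation with $\|a_\la\|_\infty\le K^2+cV[a]_\alpha$ and $[a_\la]_\alpha\le[a]_\alpha$, feed in the rescaled energy bound from Lemma~\ref{p_c2}, conclude on $Q_{2V}$, scale back) matches the paper, and your bookkeeping of the scaling factor $\la^q$ is correct. The gap is in the key step: the estimate you invoke, which you describe as ``the analogue of Theorem~\ref{higher} for the homogeneous equation'' quotable from \cite{MR4718687,KKM}, does not deliver what you need. Gehring-type self-improvement gives $\fiint(H_\la(z,|\na\eta_\la|))^{1+\varepsilon}$ for a \emph{small} $\varepsilon=\varepsilon(\data)$; the $q$-power in that quantity is weighted by $a_\la(z)$, which may vanish, and the exponent gain $\varepsilon$ has nothing to do with the gap $q-p$. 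What Lemma~\ref{p_lq_est} requires is the \emph{unweighted} $L^q$ norm of $\na\eta_\la$ controlled by its $L^p$ energy, i.e.\ a jump of size $q-p$ in the integrability exponent, which no reverse-H\"older argument provides.

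The paper instead quotes \cite[Lemma~4.2]{MR3532237} (Singer), an interpolation-type estimate for parabolic $(p,q)$-systems with H\"older-continuous coefficient giving $\na\eta_\la\in L^s_{\loc}$ for every $s<p+\tfrac{\alpha p}{n+2}$, with a right-hand side involving $\sup_t\fint|\eta_\la|^2$ and $\fiint(|\eta_\la|^p+|\na\eta_\la|^p)$ --- which is exactly why Lemma~\ref{p_c2} records the $\sup$-term and the $L^p$ norm of $\eta$ itself, not only of $\na\eta$. This is also precisely the point where the upper bound on $q$ in \eqref{range_pq} is used: one checks $q<p+\tfrac{\alpha p}{n+2}$ so that $s=q$ is admissible. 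Your proposal never verifies that $q$ lies in the admissible range of the quoted estimate, and it misses the case $\alpha=1$, where $q$ may fail to lie in the open interval and one must first replace $\alpha$ by a suitable $\tilde\alpha<1$ with $\tfrac{\tilde\alpha p}{n+2}>q-p$ before applying the lemma. As written, the proof does not close.
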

\begin{proof}
    We consider the scaled map
    \[
    \eta_\la(x,t)=\frac{1}{\la^\frac{p}{2}\rho}  \eta\bigl(\la^{\frac{p-2}{2}}\rho x,\rho^2t\bigr) ,\quad (x,t)\in Q_{4V}.
    \]
    As $b_0$ is a constant, we employ Lemma~\ref{p_sc_lem}. Then $\eta_\la$ is a weak solution to
    \[
    \partial_t\eta_\la-\dv(b_0\mathcal{A}_\la(z,\na \eta_\la))=0\quad\text{in}\quad Q_{4V}.
    \]
    Moreover, we have from the proof of Lemma~\ref{p_high} that
    \begin{align}\label{p_eta_1}
        [a]_\la+\| a_\la\|_{L^\infty(Q_{4V})}+\fiint_{Q_{4V}}|\na \eta_\la|^{p}\,dz\le c_\delta,
    \end{align}
    while the application of the scaling argument to the estimate in Lemma~\ref{p_c2} gives
    \begin{align}\label{p_eta_2}
        \sup_{I_{4V}}\fint_{B_{4V}^\la} |\eta_\la|^2\,dx+\fiint_{Q_{4V}}|\eta_\la|^p\,dz\le c_\delta.
    \end{align}
    The conclusion of this lemma follows by scaling back from the following estimate
    \begin{align}\label{p_singer}
        \fiint_{Q_{2V}}|\na \eta_\la|^q\,dz\le c_\delta.
    \end{align}
    To show this, we divide cases.
    
    \textit{Case $\alpha\in(0,1)$:} In this case, we apply \cite[Lemma~4.2]{MR3532237} to have that for any $s\in (p,p+\tfrac{\alpha p}{n+2})$, there holds
    \[
    \iint_{Q_{2V}}|\na \eta_\la|^s\,dz \le c_\delta\left(1+ \sup_{I_{4V}}\fint_{B_{4V}^\la} |\eta_\la|^2\,dx+\fiint_{Q_{4V}}(|\eta_\la|^p+|\na \eta_\la|^p)\,dz \right)^\kappa,
    \]
     where $c_\delta=c(n,p,s,\nu,L,\alpha, V ,\delta   )$ and $\kappa=\kappa(n,p,s,\alpha)$. Since $\tfrac{\alpha p}{n+2}>\tfrac{\alpha (p(n+2)-2n)}{2(n+2)} $,
     by taking $s=q$ and using \eqref{p_eta_1} and \eqref{p_eta_2}, the estimate \eqref{p_singer} follows.

     \textit{Case $\alpha=1$:} In this case, note that $a_\la$ is ($\Tilde{\alpha},\Tilde{\alpha}/2$)-H\"older continuous for any $\Tilde{\alpha
     }\in (0,1)$. In particular, we fix $\Tilde{\alpha}$ to satisfy
     \[
     \Tilde{\alpha}>\frac{n+2}{2}-\frac{n}{p}=1- \biggl(\frac{n}{p}-\frac{n}{2}\biggr).
     \]
    Then $\tfrac{\Tilde{\alpha} p}{n+2}>\tfrac{p(n+2)-2n}{2(n+2)} $ holds and we get
   \[
    \iint_{Q_{2V}}|\na \eta_\la|^q\,dz \le c_\delta\left(1+ \sup_{I_{4V}}\fint_{B_{4V}^\la} |\eta_\la|^2\,dx+\fiint_{Q_{4V}}(|\eta_\la|^p+|\na \eta_\la|^p)\,dz \right)^\kappa,
    \]
     where $c_\delta=c(n,p,q,\nu,L,\Tilde{\alpha}, V)$ and $\kappa=\kappa(n,p,q,\Tilde{\alpha})$. Hence, \eqref{p_singer} again follows from \eqref{p_eta_1} and \eqref{p_eta_2}.
\end{proof}

The last map we construct for the comparison estimate in the $p$-intrinsic geometry is the weak solution $v\in C(I_{2V\rho};L^2(B_{2V\rho}^\la,\RR^N))\cap L^q(I_{2V\rho};W^{1,q}(B_{2V\rho}^\la,\RR^N))$ to 
\[\begin{cases}
        v_t-\dv(b_0(|\na v|^{p-2}\na v+a_s|\na v|^{q-2}\na v))=0&\text{in}\quad Q_{2V\rho}^\la,\\
        v=\eta &\text{on}\quad \pa_pQ_{2V\rho}^\la,
    \end{cases}\]
where we set
\[
a_s=\sup_{z\in Q_{2V\rho}^\la}a(z).
\]

\begin{lemma}\label{p_c3}
        There holds
       \[\frac{1}{|Q_{\rho}^\lambda|}\iint_{Q_{V\rho}^\la} H(z,|\na \eta-\na v|)\,dz\le \frac{1}{2^{2q} 3}\ep\la^p.\]
        Also, there exists $c_\delta=c(\data,\delta)$ such that
        \[\fiint_{Q_{2V\rho}^\la}\left(|\na v|^p+a_s|\na v|^q\right)\,dz\le c_\delta\la^p.\]
\end{lemma}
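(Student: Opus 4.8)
The plan is to mirror the structure of Lemma~\ref{p_c2}, using the energy estimate obtained by testing the difference equation with $\eta - v$, and then to absorb the error terms produced by the phase discrepancy using Lemma~\ref{p_lq_est} together with Lemma~\ref{p_lem}. First I would set up the comparison. Both $\eta$ and $v$ solve equations with the frozen coefficient $b_0$; the difference is that $\eta$ carries the variable coefficient $a(z)$ in the $q$-Laplace part while $v$ carries the constant $a_s = \sup_{Q_{2V\rho}^\la} a$. Testing the equation
\[
\partial_t(\eta - v) - \dv\bigl(b_0(|\na \eta|^{p-2}\na\eta - |\na v|^{p-2}\na v) + b_0(a(z)|\na\eta|^{q-2}\na\eta - a_s|\na v|^{q-2}\na v)\bigr) = 0
\]
with $\eta - v$, and splitting the $q$-part as $a(z)|\na\eta|^{q-2}\na\eta - a_s|\na v|^{q-2}\na v = a_s(|\na\eta|^{q-2}\na\eta - |\na v|^{q-2}\na v) + (a(z)-a_s)|\na\eta|^{q-2}\na\eta$, the monotonicity of the vector fields $\xi\mapsto|\xi|^{p-2}\xi$ and $\xi\mapsto|\xi|^{q-2}\xi$ gives, after the usual manipulations as in \cite[Lemma~3.4]{K2024},
\[
\sup_{t\in I_{2V\rho}}\fint_{B_{2V\rho}^\la}\frac{|\eta-v|^2(x,t)}{(2V\rho)^2}\,dx + \fiint_{Q_{2V\rho}^\la} H(z,|\na\eta-\na v|)\,dz \le c\fiint_{Q_{2V\rho}^\la}|a_s - a(z)|\,|\na\eta|^{q-1}|\na\eta-\na v|\,dz.
\]

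Next I would estimate the right-hand side. Since $a(z)$ is $(\alpha,\alpha/2)$-Hölder continuous with constant $[a]_\alpha$ and $Q_{2V\rho}^\la$ has spatial radius $\la^{(p-2)/2}(2V\rho) \le 2V\rho$ (using $\la\ge 1$, $p\le 2$) and time radius $(2V\rho)^2$, we have $|a_s - a(z)| \le c\,[a]_\alpha\,\rho^\alpha$ on $Q_{2V\rho}^\la$. Pulling this oscillation bound out and applying Young's inequality on the $q$-Laplace structure, the right-hand side is controlled by
\[
c\,\rho^\alpha\fiint_{Q_{2V\rho}^\la}|\na\eta|^q\,dz + \tfrac{1}{2}\fiint_{Q_{2V\rho}^\la}a_s|\na\eta-\na v|^q\,dz,
\]
where the last term — noting $a_s\le \|a\|_{L^\infty}$ and that it appears with a small constant — cannot simply be absorbed unless we are more careful; the cleaner route is to use Young's inequality in the form $|a_s-a(z)|\,|\na\eta|^{q-1}|\na\eta-\na v| \le c\,\rho^{\alpha q}|\na\eta|^q \cdot (\text{something}) + \epsilon$-term, or rather to split $|a_s-a(z)| = |a_s-a(z)|^{1/q}\cdot|a_s-a(z)|^{(q-1)/q}$ and distribute, so that the $\na\eta-\na v$ term comes with a factor $c\,\rho^\alpha$ multiplying $a_s|\na\eta-\na v|^q \le H(z,|\na\eta-\na v|)$, which is then absorbed into the left for $\rho_0$ small. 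The leftover term $c\,\rho^\alpha\fiint_{Q_{2V\rho}^\la}|\na\eta|^q\,dz$ is handled by Lemma~\ref{p_lq_est}: it is bounded by $c_\delta\,\rho^\alpha\la^q$, and then Lemma~\ref{p_lem} (applied with the constant $c_\delta$ there absorbing the present $c_\delta$) gives $c_\delta\rho^\alpha\la^q \le \tfrac{1}{(2V)^{n+2}2^{2q}3}\epsilon\la^p$. Since $|Q_{V\rho}^\la|/|Q_\rho^\la| = V^{n+2}\le (2V)^{n+2}$, restricting the integral to $Q_{V\rho}^\la$ and dividing by $|Q_\rho^\la|$ yields the first claimed inequality.

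For the second inequality, I would test the equation for $v$ itself with $v - \eta$ (which vanishes on $\pa_p Q_{2V\rho}^\la$), or equivalently combine the energy bound just obtained with the bound on $\eta$ from Lemma~\ref{p_c2}: by the triangle inequality,
\[
\fiint_{Q_{2V\rho}^\la}\bigl(|\na v|^p + a_s|\na v|^q\bigr)\,dz \le c\fiint_{Q_{2V\rho}^\la} H(z,|\na v - \na\eta|)\,dz + c\fiint_{Q_{2V\rho}^\la}\bigl(|\na\eta|^p + a_s|\na\eta|^q\bigr)\,dz,
\]
where the first term on the right is $\le c_\delta\la^p$ by the first part (before the final smallness reduction) and the second is controlled by $c_\delta\la^p$ via Lemma~\ref{p_c2} for the $p$-part and Lemma~\ref{p_lq_est} together with $a_s\le\|a\|_{L^\infty}$ — or, more efficiently, by noting $a_s|\na\eta|^q \le c\,\rho^{-\alpha}\cdot\rho^\alpha|\na\eta|^q$ is \emph{not} the right bound; instead one uses directly that $a_s = a(z) + O(\rho^\alpha)$ so $a_s\fiint|\na\eta|^q \le \fiint a(z)|\na\eta|^q + c\rho^\alpha\fiint|\na\eta|^q \le \fiint H(z,|\na\eta|) + c_\delta\rho^\alpha\la^q \le c_\delta\la^p$ by Lemma~\ref{p_c2}, Lemma~\ref{p_lq_est}, and Lemma~\ref{p_lem}. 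This gives the energy bound $c_\delta\la^p$.

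The main obstacle I expect is bookkeeping the absorption of the $a_s|\na\eta - \na v|^q$ term: one must split the oscillation factor $|a_s - a(z)|$ so that the piece accompanying $|\na\eta-\na v|^q$ still carries a positive power of $\rho$ (hence can be made $\le 1/(4L)$ after choosing $\rho_0$), while the piece accompanying $|\na\eta|^q$ is exactly of the form $c_\delta\rho^\alpha\la^q$ so that Lemma~\ref{p_lem} applies verbatim. Everything else — the Caccioppoli-type testing, the monotonicity inequalities, the triangle-inequality reconstruction of the $v$-energy — is routine and parallels Lemmas~\ref{p_c1} and~\ref{p_c2}.
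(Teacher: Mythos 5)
Your proposal follows the paper's proof essentially verbatim: test the difference equation with $\eta-v$, apply Young's inequality to $|a_s-a(z)|\,|\na\eta|^{q-1}|\na\eta-\na v|$, absorb the $q$-difference term, bound the leftover by $c_\delta\rho^\alpha\fiint|\na\eta|^q\le c_\delta\rho^\alpha\la^q$ via Lemma~\ref{p_lq_est}, invoke Lemma~\ref{p_lem}, and recover the energy bound for $v$ by the triangle inequality together with $a_s\le a(z)+c\rho^\alpha$. The one point where you hedge --- the absorption of the $a_s|\na\eta-\na v|^q$ term --- has a cleaner resolution than the one you sketch: keep the oscillation factor on both Young terms, so the difference term is $\tfrac14\fiint|a_s-a(z)|\,|\na\eta-\na v|^q$, and then use simply $0\le a_s-a(z)\le a_s$ (since $a\ge0$) to absorb it into the $a_s|\na\eta-\na v|^q$ on the left with constant $\tfrac14$; no smallness of $\rho$ is needed at this step. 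By contrast, your alternative of replacing $|a_s-a(z)|$ by $c\rho^\alpha$ in front of $|\na\eta-\na v|^q$ would not permit absorption, because the left-hand side's $q$-term carries the coefficient $a_s$ rather than $1$, and one has no useful lower bound on $a_s$ in terms of $\rho^\alpha$.
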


\begin{proof}
    We take $\eta-v$ as a test function to 
    \begin{align*}
        \begin{split}
            &\partial_t(\eta-v)-\dv( b_0(|\na \eta|^{p-2}\na \eta-|\na v|^{p-2}v +a_s (|\na \eta|^{q-2}\na \eta -|\na v|^{p-2}\na v) ) )\\
            &=- \dv(  b_0(a_s-a(z))|\na \eta|^{q-2}\na \eta
        \end{split}
    \end{align*}
    in $Q_{2V\rho}^\la$. Then we get
    \[
    \fiint_{Q_{2V\rho}^\la}\left(|\na \eta-\na v|^p+a_s|\na \eta-\na v|^q\right)\,dz\le c\fiint_{Q_{2V\rho}^\la}|a(z)-a_s||\na \eta|^{q-1}|\na \eta-\na v|\,dz
    \]
    for some $c=c(n,N,p,q,\nu,L)$. Applying \eqref{coeff} and Young's inequality, the right-hand side can be estimated by
    \begin{align*}
        \begin{split}
            &c\fiint_{Q_{2V\rho}^\la}|a(z)-a_s||\na \eta|^{q-1}|\na \eta-\na v|\,dz\\
            &\le c\fiint_{Q_{2V\rho}^\la}|a(z)-a_s||\na \eta|^{q}\,dz+\frac{1}{4}\fiint_{Q_{2V\rho}^\la}|a(z)-a_s||\na \eta-\na v|^q\,dz\\
            &\le c(V\rho)^\alpha\fiint_{Q_{2V\rho}^\la}|\na \eta|^{q}\,dz+\frac{1}{2}\fiint_{Q_{2V\rho}^\la}a_s|\na \eta-\na v|^q\,dz.
        \end{split}
    \end{align*}
   Therefore, absorbing the last term into the left hand side, it follows that
   \[
   \fiint_{Q_{2V\rho}^\la}\left(|\na \eta-\na v|^p+a_s|\na \eta-\na v|^q\right)\,dz\le c_\delta\rho^\alpha\fiint_{Q_{2V\rho}^\la}|\na \eta|^{q}\,dz.
   \]
   Moreover, we apply Lemma~\ref{p_lq_est} and Lemma~\ref{p_lem} to have
   \[
   \fiint_{Q_{2V\rho}^\la}\left(|\na \eta-\na v|^p+a_s|\na \eta-\na v|^q\right)\,dz\le \frac{1}{(2V)^{n+2}2^{2q}3}\epsilon \la^p.
   \]
    Therefore, since $a(z)\le a_s$ holds in $Q_{V\rho}^\la$, the first estimate in this lemma follows from the above inequality.
    On the other hand, we observe
    \begin{align*}
        \begin{split}
            &\fiint_{Q_{V\rho}^\la}(|\na v|^p+a_s|\na v|^q)\,dz\\
            &\le c\fiint_{Q_{V\rho}^\la}\left(|\na \eta-\na v|^p+a_s|\na \eta-\na v|^q\right)\,dz+c\fiint_{Q_{V\rho}^\la}\left(|\na \eta|^p+a_s|\na \eta|^q\right)\,dz\\
            &\le c\fiint_{Q_{V\rho}^\la}\left(|\na \eta-\na v|^p+a_s|\na \eta-\na v|^q\right)\,dz+c\fiint_{Q_{V\rho}^\la}H(z,|\na \eta|)\,dz\\
            &\qquad+c_\delta\rho^\alpha\fiint_{Q_{V\rho}^\la}|\na \eta|^q\,dz.
        \end{split}
    \end{align*}
   Hence, by using the first inequality of this lemma, Lemma~\ref{p_lq_est}, Lemma~\ref{p_lem} and Lemma~\ref{p_c2}, the second inequality of this lemma follows.
\end{proof}

\begin{lemma}\label{p_lip}
There exists $c_\delta=c(\data,\delta)$ such that
    \[\sup_{z\in Q_{V\rho}^\la}|\na v(z)|\le c_\delta\la.\]    
\end{lemma}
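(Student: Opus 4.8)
The plan is to establish the Lipschitz bound for $v$ by a scaling argument that reduces the problem to a standard gradient bound for the $(p,q)$-Laplace system with constant coefficient $a_s$ (rescaled) and constant $b_0$. First I would introduce the scaled map
\[
v_\la(x,t)=\frac{1}{\la^{p/2}\rho}v(\la^{(p-2)/2}\rho x,\rho^2 t),\quad (x,t)\in Q_{2V},
\]
together with $a_{\la,s}=\la^{q-p}a_s$, so that by the same computation as in Lemma~\ref{p_sc_lem}, $v_\la$ is a weak solution to $\partial_t v_\la-\dv(b_0(|\na v_\la|^{p-2}\na v_\la+a_{\la,s}|\na v_\la|^{q-2}\na v_\la))=0$ in $Q_{2V}$. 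The point of passing to $v_\la$ is that the structural constants are now controlled uniformly in $\la$: from Lemma~\ref{p_lem} we have $a_{\la,s}=\la^{q-p}a_s\le \la^{q-p}(a(\om)+[a]_\alpha(2V\rho)^\alpha)\le K^2+2V[a]_\alpha\le c(\data)$, and from the second estimate of Lemma~\ref{p_c3} together with the scaling identity $\fiint_{Q_{2V}}(|\na v_\la|^p+a_{\la,s}|\na v_\la|^q)\,dz=\la^{-p}\fiint_{Q_{2V\rho}^\la}(|\na v|^p+a_s|\na v|^q)\,dz\le c_\delta$, we control the energy. Likewise, the energy bounds of Lemma~\ref{p_c3} (via Lemma~\ref{p_c2} for $\eta$) transfer to give $\sup_{I_{2V}}\fint_{B_{2V}^\la}|v_\la|^2\,dx+\fiint_{Q_{2V}}|v_\la|^p\,dz\le c_\delta$.

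Next I would invoke the known local Lipschitz regularity for weak solutions of the $(p,q)$-Laplace system with constant coefficients — that is, an estimate of the form
\[
\sup_{Q_{V}}|\na v_\la|\le c\left(1+\sup_{I_{2V}}\fint_{B_{2V}^\la}|v_\la|^2\,dx+\fiint_{Q_{2V}}(|v_\la|^p+|\na v_\la|^p)\,dz\right)^{\kappa},
\]
with $c$ and $\kappa$ depending only on $n,N,p,q,\nu,L$ and the bound on $a_{\la,s}$ (this is the parabolic analogue of the gradient bounds used for the singular $(p,q)$ system, cf.\ the references on double-phase regularity cited in the introduction, in particular \cite{MR3532237,MR4718687}; it plays the same role here as Lemma~\ref{p_lq_est} did for the $L^q$ bound). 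Applying it and using the bounds collected in the previous paragraph yields $\sup_{Q_V}|\na v_\la|\le c_\delta$. Scaling back via $\na v(\la^{(p-2)/2}\rho x,\rho^2 t)=\la^{p/2}\la^{-(p-2)/2}\na v_\la(x,t)=\la\,\na v_\la(x,t)$ gives $\sup_{Q_{V\rho}^\la}|\na v|\le c_\delta\la$, which is the claim.

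The main obstacle is invoking the right a priori Lipschitz estimate for the singular parabolic $(p,q)$-Laplace system: unlike the degenerate case $p\ge 2$, the range $\tfrac{2n}{n+2}<p\le 2$ forces one to work in the intrinsic geometry, and one must check that the cited gradient estimate is genuinely available in this range and that its constants depend only on the $\data$-quantities and on $\|a_{\la,s}\|_\infty$ (not on $\la$ or $\rho$). Once that citation is in place, everything else is the routine scaling bookkeeping already rehearsed in Lemmas~\ref{p_sc_lem}, \ref{p_high} and \ref{p_lq_est}; the only care needed is to make sure, as in those lemmas, that $[a_\la]_\alpha\le[a]_\alpha$ and $\|a_{\la,s}\|_\infty\le c(\data)$ so that the structural hypotheses of the regularity theorem are met uniformly, and that the various energy quantities are transported correctly between $Q_{2V\rho}^\la$ and $Q_{2V}$.
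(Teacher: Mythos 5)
Your proposal follows the paper's proof essentially verbatim: rescale $v$ to $v_\la$ on $Q_{2V}$, check that $\la^{q-p}a_s$ and the energy $\fiint_{Q_{2V}}H_\la(|\na v_\la|)\,dz$ are bounded by $c_\delta$ uniformly in $\la$ and $\rho$ (via Lemma~\ref{p_sc_lem}, Lemma~\ref{p_lem} and Lemma~\ref{p_c3}), invoke a sup-gradient estimate for the constant-coefficient double-phase system, and scale back. The only correction concerns the external input you rightly flag as the main obstacle: the Lipschitz estimate is taken from Baroni--Lindfors \cite{BARONI2017593}, not from \cite{MR3532237} or \cite{MR4718687} (which provide only higher integrability and existence, no $L^\infty$ gradient bound), and in the form used it depends solely on $\fiint_{Q_{2V}}H_\la(|\na v_\la|)\,dz+1$, so the lower-order bounds on $v_\la$ you transport are not actually needed.
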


\begin{proof}
    We replace $a_\la(x,t)$ and $H_\la(z,s)$ in \eqref{p_scale} by the constant $\la^{q-p}a_s$ and denote
    \[
    H_\la(|\xi|)=b_0(|\xi|^p+\la^{q-p}a_s|\xi|^q)=b_0(|\xi|^{p-2}\xi+\la^{q-p}a_s|\xi|^{q-2}\xi)\cdot \xi.
    \]
    Then by Lemma~\ref{p_sc_lem}, the scaled map defined as
    \[
    v_\la(x,t)=\frac{1}{\la^\frac{p}{2}\rho}v   \bigl(\la^{\frac{p-2}{2}}\rho x,\rho^2t\bigr) ,\quad (x,t)\in Q_{2V}
    \]
    is a weak solution to
    \[
    \pa_t-\dv(b_0(|\na v_\la|^{p-2}\na v_\la+\la^{q-p}a_s|\na v_\la|^{q-2}\na v_\la))=0
    \]
    in $Q_{2V}$ with the estimate
    \[
    \fiint_{Q_{2V}}H_\la(|\na v_\la|)\,dz\le c_\delta.
    \]
    Since the application of the Lipschitz regularity in the spatial direction in \cite{BARONI2017593} gives
    \[
    \sup_{Q_{V}}|\na v_\la(z)|\le c\left(\fiint_{Q_{2V}}H_\la(|\na v_\la|)\,dz+1
    \right)^\gamma\le c_\delta
    \]
    for constants $c=c(n,p,q,\nu,L)$ and $\gamma=\gamma(n,p)$, the conclusion follows by scaling back the above inequality.
\end{proof}

Combining all the comparison estimates, we obtain the estimate below.
\begin{corollary}\label{p_com}
    There exists $\delta=\delta(\data,\epsilon)\in (0,1)$ and $\rho_0=\rho_0(\data,\|H(z,|F|)\|_{L^{1+\varepsilon_0}(\Om_T)},\delta,\epsilon)\in (0,1)$ such that if $\rho\in(0,\rho_0)$, then 
    \[
    \iint_{Q_{V\rho}^\la}H(z,|\na u-\na v|)\,dz\le \ep\la^p|Q_{\rho}^\la|.
    \]
\end{corollary}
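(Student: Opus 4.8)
The plan is to telescope the difference as $\na u-\na v=(\na u-\na\zeta)+(\na\zeta-\na\eta)+(\na\eta-\na v)$ and to add up the three comparison estimates of Lemmas~\ref{p_c1}, \ref{p_c2} and \ref{p_c3}. First I would fix $\delta=\delta(\data,\ep)\in(0,1)$ as the smaller of the two thresholds produced by Lemmas~\ref{p_c1} and \ref{p_c2}, and then take $\rho_0\in(0,1)$, depending on $\data$, $\|H(z,|F|)\|_{L^{1+\varepsilon_0}(\Om_T)}$, $\delta$ and $\ep$, to be the minimum of the radii appearing in Lemmas~\ref{p_lem}, \ref{p_c1}, \ref{p_c2} and \ref{p_c3}. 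Then for every $\rho\in(0,\rho_0)$ the first inequalities of Lemmas~\ref{p_c1}, \ref{p_c2} and \ref{p_c3} hold simultaneously, bounding the averages over $Q_{V\rho}^\la$ of $H(z,|\na u-\na\zeta|)$, $H(z,|\na\zeta-\na\eta|)$ and $H(z,|\na\eta-\na v|)$ by $\tfrac{\ep}{2^q3}\la^p$, $\tfrac{\ep}{2^{2q}3}\la^p$ and $\tfrac{\ep}{2^{2q}3}\la^p$, respectively; note that $\zeta,\eta,v$ are defined on $Q_{8V\rho}^\la$, $Q_{4V\rho}^\la$, $Q_{2V\rho}^\la$, all of which contain $Q_{V\rho}^\la$, and that the inclusions $Q_{V\rho}^\la\subset Q_{16V\rho}^\la\subset Q_{2R}(z_0)\subset\Om_T$ from Assumption~\ref{p_ass} make every integral legitimate.

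The remaining step is the quasi-triangle inequality for the functional $H(z,\cdot)$. Because $p>\tfrac{2n}{n+2}\ge1$ for $n\ge2$, both $p$ and $q$ are larger than $1$, so $s\mapsto s^p$ and $s\mapsto s^q$ are convex; since moreover $p\le q$, this gives $H(z,s_1+s_2)\le 2^{q-1}\bigl(H(z,s_1)+H(z,s_2)\bigr)$ for all $s_1,s_2\ge0$. Applying it twice to the telescoped difference yields $H(z,|\na u-\na v|)\le 2^{q-1}H(z,|\na u-\na\zeta|)+2^{2(q-1)}H(z,|\na\zeta-\na\eta|)+2^{2(q-1)}H(z,|\na\eta-\na v|)$. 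Integrating over $Q_{V\rho}^\la$, dividing by $|Q_{\rho}^\la|$ and inserting the three comparison bounds produces the constant $\tfrac{2^{q-1}}{2^q3}+\tfrac{2^{2(q-1)}}{2^{2q}3}+\tfrac{2^{2(q-1)}}{2^{2q}3}=\tfrac16+\tfrac1{12}+\tfrac1{12}=\tfrac13$, so that $\iint_{Q_{V\rho}^\la}H(z,|\na u-\na v|)\,dz\le\tfrac13\ep\la^p|Q_{\rho}^\la|\le\ep\la^p|Q_{\rho}^\la|$, as claimed.

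There is no genuine difficulty in this corollary; it is essentially bookkeeping. The only delicate points are the order of quantification --- $\delta$ must be chosen before $\rho_0$, since several of the radii in Lemmas~\ref{p_lem}--\ref{p_c3} depend on $\delta$, and the dependence on $\|H(z,|F|)\|_{L^{1+\varepsilon_0}(\Om_T)}$ enters through the higher-integrability bound of Theorem~\ref{higher} invoked in Lemmas~\ref{p_lem} and \ref{p_c1} --- and the matching of the powers of $2$: the factors $2^{-q}$ and $2^{-2q}$ deliberately inserted on the right-hand sides of Lemmas~\ref{p_c1}, \ref{p_c2} and \ref{p_c3} are exactly what is needed so that, after the double application of the $2^{q-1}$-convexity inequality, the three contributions add up to a constant strictly below $1$, uniformly in $q$.
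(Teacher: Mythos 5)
Your proof is correct and is exactly the argument the paper intends (the corollary is stated as "combining all the comparison estimates"): telescoping through $\zeta$, $\eta$, $v$, applying the quasi-triangle inequality $H(z,s_1+s_2)\le 2^{q-1}(H(z,s_1)+H(z,s_2))$ twice, and summing the bounds of Lemmas~\ref{p_c1}, \ref{p_c2} and \ref{p_c3}, whose factors $2^{-q}$ and $2^{-2q}$ are calibrated precisely so the total is $\tfrac13\ep\la^p\le\ep\la^p$. The quantification order ($\delta$ before $\rho_0$) is also handled correctly.
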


\subsection{$(p,q)$-intrinsic case.}
We now will get comparison estimates for the case $K^2\la_\om^p< a(\om)\la_\om^q$ with the following stopping time argument in the $(p,q)$-intrinsic cylinder defined in \eqref{def_G_cylinder}. 
\begin{assumption}\label{q_ass}
   For $\om=(y,s)\in Q_{R}(z_0)$, there exist $\lao>1$ and $\rho_\om\in(0,\rho_0)$ such that $G_{16V\rho_\om}^{\la_\om}(\om)\subset Q_{2R}(z_0)$ and satisfying the following conditions.
    \begin{enumerate}[label={(\roman*)}, start=3]
        \item\label{iii} $(p,q)$-intrinsic case: $K^2\lao^p< a(\om)\lao^q$,
    \item stopping time argument for $p$-intrinsic cylinder: 
    \begin{enumerate}[label=(\alph*),series=theoremconditions, start=3]
        \item\label{c} $\fiint_{Q_{16V\rho_\om}^{\lao}(\om)}\left(H(z,|\na u|)+\delta^{-1}H(z,|F|)\right)\,dz< H(\om,\lao)$,
        \item\label{d} $\fiint_{Q_{\rho_\om}^{\lao}(\om)}\left(H(z,|\na u|)+\delta^{-1}H(z,|F|)\right)\,dz=H(\om,\lao)$,
\end{enumerate}
\end{enumerate}
\end{assumption}
For convenience, we again omit the referenced center $\om$ and $\om$ will be simply denoted by $0$.

With the assumption \ref{iii}, we prove the comparability of $a(\cdot)$ in $Q_{5V\rho}$ and thus \eqref{eq} is the $(p,q)$-Laplace type system there.
\begin{lemma}\label{coeff_com}
    We have
    \[
    \frac{a(0)}{2}\le a(z)\le 2a(0)\quad\text{for all}\quad z\in Q_{5V\rho}.
    \]
    Moreover, we have
    \[
    [a]_\alpha(5V\rho)^\alpha< \inf_{z\in Q_{5V\rho}}a(z).
    \]
\end{lemma}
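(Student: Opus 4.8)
The plan is to reduce both assertions to a single lower bound for $a(0)$, namely $a(0)\ge 4[a]_\alpha(5V\rho)^\alpha$, and then to read off the two conclusions from the H\"older continuity \eqref{coeff} of $a$. Since $\lao\ge 1$ and $p\le 2$ we have $\lao^{\frac{p-2}{2}}\le 1$, so every $z=(x,t)\in Q_{5V\rho}$ satisfies $|x-y|< 5V\rho$ and $|t-s|< (5V\rho)^2$ (writing $\om=(y,s)$); hence \eqref{coeff} gives $|a(z)-a(0)|< 2[a]_\alpha(5V\rho)^\alpha$. If $a(0)\ge 4[a]_\alpha(5V\rho)^\alpha$, then $|a(z)-a(0)|\le a(0)/2$ on $Q_{5V\rho}$, which gives at once $\tfrac{a(0)}{2}\le a(z)\le 2a(0)$ and, in turn, $\inf_{Q_{5V\rho}}a\ge a(0)/2>[a]_\alpha(5V\rho)^\alpha$.

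To establish the lower bound for $a(0)$, I would first bound $\lao$ from above. Arguing exactly as in the proof of Lemma~\ref{p_lem} — but using the stopping time identity \ref{d} together with the trivial bound $\lao^p\le H(0,\lao)$ in place of \ref{b}, and Theorem~\ref{higher} applied on $Q_{2R}(z_0)$ (legitimate since $Q_{4R}(z_0)\subset\Om_T$ and the cylinder of \ref{d} is contained in $Q_{2R}(z_0)$ by Assumption~\ref{q_ass}) — one obtains, after inserting $|Q_\rho^\lao|\simeq\lao^{\frac{n(p-2)}{2}}\rho^{n+2}$ and applying H\"older's inequality,
\[
\lao^{\,p-\frac{n(2-p)}{2(1+\varepsilon_0)}}\rho^{\frac{n+2}{1+\varepsilon_0}}\le c_R,
\]
where $c_R=c_R(\data,\|a\|_{L^\infty(\Om_T)},\|H(z,|F|)\|_{L^{1+\varepsilon_0}(\Om_T)},R)$. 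The exponent of $\lao$ is strictly positive because $p(n+2)-2n>0$ by \eqref{range_pq}, so this can be solved for $\lao$:
\[
\lao\le c_R\,\rho^{-\beta},\qquad \beta:=\frac{2(n+2)}{p(n+2)-2n+2p\varepsilon_0}.
\]

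Now I would feed this into the $(p,q)$-intrinsic condition \ref{iii}, which reads $a(0)>K^2\lao^{-(q-p)}\ge K^2c_R^{-(q-p)}\rho^{\,\beta(q-p)}$. The decisive arithmetic is that, by the upper bound for $q$ in \eqref{range_pq} together with $\varepsilon_0>0$,
\[
\beta(q-p)\le\frac{\alpha\,(p(n+2)-2n)}{p(n+2)-2n+2p\varepsilon_0}<\alpha,
\]
so $\alpha-\beta(q-p)>0$ and, for $\rho<\rho_0$, $\rho^{\beta(q-p)}\ge\rho_0^{-(\alpha-\beta(q-p))}\rho^\alpha$. Recalling $V=9K$ and that $K$ (hence $V$) is bounded above by a constant depending only on $\data$ and $\delta$ — because $\iint_{Q_{2\rho_0}(z_0)}(H(z,|\na u|)+\delta^{-1}H(z,|F|))\,dz\le(1+\delta^{-1})\|H(z,|\na u|)+H(z,|F|)\|_{L^1(\Om_T)}$ uniformly in $\rho_0$ — the target $a(0)\ge 4[a]_\alpha(5V\rho)^\alpha=4\cdot 45^\alpha[a]_\alpha K^\alpha\rho^\alpha$ is implied by $K^{2-\alpha}c_R^{-(q-p)}\rho_0^{-(\alpha-\beta(q-p))}\ge 4\cdot 45^\alpha[a]_\alpha$, and this holds once $\rho_0$ is small enough, since $K^{2-\alpha}\ge 1$ and every other quantity is independent of $\rho_0$. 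This finishes the plan: the second inequality of the lemma has been produced directly, and the first is the H\"older consequence recorded in the first paragraph.

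The step I expect to be delicate is the exponent accounting in the last two displays: one must verify that the scaling deficit hidden in $\beta$, combined with the admissible range \eqref{range_pq}, yields $\beta(q-p)<\alpha$ \emph{strictly} — and it is precisely the positivity of the higher-integrability exponent $\varepsilon_0$ that buys this strict inequality and rescues the borderline choice $q=p+\tfrac{\alpha(p(n+2)-2n)}{2(n+2)}$. The remaining ingredients (the inclusion of the stopping-time cylinder in $Q_{2R}(z_0)$, positivity of $p-\tfrac{n(2-p)}{2(1+\varepsilon_0)}$, the $\rho_0$-uniform bound on $K$, and the elementary H\"older estimate on $Q_{5V\rho}$) are routine.
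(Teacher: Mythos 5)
Your proof is correct in substance, but it takes a genuinely different route from the paper's. The paper proves the second inequality by contradiction: assuming $\inf_{Q_{5V\rho}}a\le[a]_\alpha(5V\rho)^\alpha$, it combines \ref{iii}, the stopping-time identity \ref{d}, and the \emph{specific definition of $K$} in \eqref{K_def} to get $\rho^\alpha<\la^{-\frac{\alpha(p(n+2)-2n)}{2(n+2)}}\tfrac{K}{180[a]_\alpha}$, whence $K^2\la^p\le a(0)\la^q\le 90K[a]_\alpha\rho^\alpha\la^q\le\tfrac12 K^2\la^p$ — a contradiction. That argument needs neither Theorem~\ref{higher} nor any smallness of $\rho_0$, and it works at the borderline exponent $q=p+\tfrac{\alpha(p(n+2)-2n)}{2(n+2)}$ without invoking $\varepsilon_0$; the price is that it only works because $K$ was engineered for exactly this purpose. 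You instead prove the quantitative bound $a(0)\ge 4[a]_\alpha(5V\rho)^\alpha$ directly, via the higher-integrability bound $\la\le c_R\rho^{-\beta}$ (which is precisely the intermediate output of the paper's Lemma~\ref{q_lem}, whose proof does not use the present lemma, so there is no circularity) together with \ref{iii} and the strict gap $\beta(q-p)<\alpha$ supplied by $\varepsilon_0>0$. This is valid, but it imposes an additional smallness condition on $\rho_0$ (depending on $c_R$, hence on $R$ and the global norms) that the paper's version of the lemma does not require; within the overall scheme this is harmless since $\rho_0$ is shrunk for other reasons anyway.

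One point to fix in your write-up: your description of how the bound $\la\le c_R\rho^{-\beta}$ is obtained — ``using \ref{d} together with the trivial bound $\lao^p\le H(0,\lao)$'' — does not yield your displayed inequality. Starting from $\la^p\le H(0,\la)$ and crudely lower-bounding $|G_\rho^\la|$ loses a factor $\la^{q-p}$ and produces the exponent $\tfrac{p(n+2)-2n}{2}+p\varepsilon_0-(q-p)$ in place of $\tfrac{p(n+2)-2n}{2}+p\varepsilon_0$; for the resulting $\beta'$ the strict inequality $\beta'(q-p)<\alpha$ fails at the endpoint $q$ unless $\varepsilon_0>\tfrac{\alpha(p(n+2)-2n)}{2p(n+2)}$, which is not guaranteed. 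The correct derivation (as in Lemma~\ref{q_lem}) starts from $a(0)\la^q\le H(0,\la)=\fiint_{G_\rho^\la}(\cdots)$, keeps the factor $(a(0)\la^q)^{\varepsilon_0}$, and only at the end bounds it below by $\la^{p\varepsilon_0}$ via \ref{iii}. Since your displayed inequality and your $\beta$ are the correct ones, this is a slip in the narration rather than a gap, but it sits exactly at the step you yourself identified as delicate, so it should be stated precisely.
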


\begin{proof}
    Note that the second inequality implies the first inequality. Indeed, we observe
    \[
    \sup_{z\in Q_{5V\rho}}a(z)\le \inf_{z\in Q_{5V\rho}}a(z)+[a]_\alpha(5V\rho)^\alpha\le 2\inf_{z\in Q_{5V\rho}}a(z).
    \]
    Therefore, it remains to prove the second inequality. Suppose it is false, that is, 
    \[
    \inf_{z\in Q_{5V\rho}}a(z)\le [a]_\alpha(5V\rho)^\alpha.
    \]
    Recalling \eqref{V_def}, we have
    \begin{align}\label{q_false}
     \sup_{z\in Q_{5V\rho}}a(z)\le 90K[a]_\alpha\rho^\alpha.   
    \end{align}
    On the other hand, we have from \ref{iii} and \ref{d} that
    \begin{align*}
        \begin{split}
            a(0)\la^q
            &\le \frac{\la^p+a(0)\la^q}{2\la^{\frac{n(p-2)}{2}+p}\rho^{n+2}|B_1|}\iint_{Q_\rho^\la}(H(z,|\na u|)+\delta^{-1}H(z,|F|))\,dz\\
            &\le \frac{a(0)\la^q}{\la^{\frac{n(p-2)}{2}+p}\rho^{n+2}|B_1|}\iint_{Q_\rho^\la}(H(z,|\na u|)+\delta^{-1}H(z,|F|))\,dz.
        \end{split}
    \end{align*}
     Dividing both side with $a(0)\la^q\rho^{-(n+2)}$, taking exponent $\tfrac{\alpha}{n+2}$ both side and recalling \eqref{K_def}, we obtain
     \[
     \rho^\alpha< \la^{-\frac{\alpha (p(n+2)-2n)}{2(n+2)}} \frac{1}{180[a]_\alpha}K.
     \]
    Applying \ref{iii}, \eqref{q_false} and the above inequality in order, we get
    \[
    K^2\la^p\le a(0)\la^q\le 90K[a]_\alpha \rho^\alpha \la^q\le \frac{1}{2}K^2\la^p,
    \]
    where to obtain the last inequality, we used \eqref{range_pq}. Hence this is a contradiction and the second inequality of this lemma holds.
\end{proof}

Next, we prove the corresponding result of Lemma~\ref{p_lem}.
\begin{lemma}\label{q_lem}
       For any constant $c_\delta=c(\data,\|a\|_{\infty},\|H(z,|F|)\|_{L^{1+\varepsilon_0}(\Om_T)},\delta)$, there exists $\rho_0=\rho_0(\data,\|a\|_{\infty},\|H(z,|F|)\|_{L^{1+\varepsilon_0}(\Om_T)},R,\delta,\ep)\in(0,1)$ such that if $\rho\in (0,\rho_0)$, then
\[c_\delta\rho^\alpha\la^q\le \frac{1}{(2V)^{n+2}2^{2q}3}\epsilon\la^p.\]
\end{lemma}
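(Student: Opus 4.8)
The plan is to mimic the proof of Lemma~\ref{p_lem}, replacing the $p$-intrinsic cylinder $Q_\rho^\la$ with the $(p,q)$-intrinsic cylinder $G_\rho^\la$ and the stopping-time identity \ref{b} with \ref{d}. First I would invoke Theorem~\ref{higher} exactly as in Lemma~\ref{p_lem} to get
\[
\iint_{Q_{2R}(z_0)}(H(z,|\na u|))^{1+\varepsilon_0}\,dz\le c_R
\]
with $c_R=c_R(\data,\|a\|_{L^\infty(\Om_T)},\|H(z,|F|)\|_{L^{1+\varepsilon_0}(\Om_T)},R)$. Then, using \ref{d} together with $G_\rho^\la(0)\subset Q_{2R}(z_0)$ and H\"older's inequality, I would bound
\[
H(0,\la)=\fiint_{G_\rho^\la}\bigl(H(z,|\na u|)+\delta^{-1}H(z,|F|)\bigr)\,dz\le c_R|G_\rho^\la|^{-\frac{1}{1+\varepsilon_0}},
\]
and here the key new computation is the size of the $(p,q)$-intrinsic cylinder, namely $|G_\rho^\la|=c(n)\,\la^{\frac{n(p-2)}{2}}\tfrac{\la^p}{H(0,\la)}\rho^{n+2}$ from \eqref{def_G_cylinder}. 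Substituting this in and solving for a power of $\rho$ gives
\[
\rho^{n+2}\le c_R\,H(0,\la)^{-\varepsilon_0}\la^{-\frac{n(p-2)}{2}-p},
\]
which after raising to the power $\tfrac{\alpha}{n+2}$ yields a bound on $\rho^\alpha$ in terms of negative powers of $H(0,\la)$ and $\la$.

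Next I would multiply through by $c_\delta\la^q$ and absorb factors of $H(0,\la)$. The crucial point is that in the $(p,q)$-intrinsic case we are entitled to use \ref{iii}, i.e. $a(0)\la^q>K^{-2}\cdot a(0)\la^q\ge\la^p$ is not quite it — rather $K^2\la^p<a(0)\la^q\le H(0,\la)$, so $\la^p\le H(0,\la)$ and also $a(0)\la^q\le H(0,\la)\le 2a(0)\la^q$; hence $H(0,\la)$ is comparable to $a(0)\la^q$ and bounded below by $\la^p$. I would use these to trade the $\la^q$ factor: since $a(0)\le\|a\|_{L^\infty}$, we have $\la^q\le \|a\|_{L^\infty}^{-1}\cdot a(0)\la^q\le \|a\|_{L^\infty}^{-1}H(0,\la)$, and more importantly the exponent bookkeeping is designed so that the surviving power of $\la$ is nonpositive precisely because of the upper bound on $q$ in \eqref{range_pq}; the term $\tfrac{\alpha(p(n+2)-2n)}{2(n+2)}=\tfrac{\alpha}{n+2}\bigl(\tfrac{n(p-2)}{2}+p\bigr)$ is exactly the deficit that gets cancelled when $\rho^\alpha$ is substituted. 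After collecting a genuinely positive power of $\rho$ (coming from the slack $\varepsilon_0$, just as in Lemma~\ref{p_lem}), one obtains $c_\delta\rho^\alpha\la^q\le c_\delta c_R\rho_0^{\theta}\la^p$ for some $\theta>0$, and the conclusion follows by taking $\rho_0$ small.

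I would structure the exponent computation carefully so the roles of $H(0,\la)$ versus $\la^p$ are explicit: write $H(0,\la)^{-\varepsilon_0}\le\la^{-p\varepsilon_0}$ (valid since $H(0,\la)\ge\la^p$ and the exponent is negative) to first get $\rho^{n+2}\le c_R\la^{-p\varepsilon_0-\frac{n(p-2)}{2}-p}$, then take the $\tfrac{\alpha}{n+2}$ power, then estimate $c_\delta\rho^\alpha\la^q=c_\delta\rho^{\frac{\alpha\varepsilon_0}{\text{(something)}}}\cdot\rho^{\alpha-\cdots}\la^q$ splitting off a positive power of $\rho$ exactly as before. The main obstacle — really the only nontrivial point — is getting the algebra of the intrinsic scaling right: one must use the correct volume of $G_\rho^\la$ (which carries the extra factor $\tfrac{\la^p}{H(0,\la)}$ compared with $Q_\rho^\la$) and then verify that, after this substitution, the net power of $\la$ is controlled by $p$ using precisely the inequality $q-\tfrac{\alpha(p(n+2)-2n)}{2(n+2)}\le p$ from \eqref{range_pq}, together with $\varepsilon_0$ small enough that the slightly perturbed exponent (with $\varepsilon_0$ included) still does not exceed $p$. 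Everything else is a direct transcription of the argument in Lemma~\ref{p_lem}.
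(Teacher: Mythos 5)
Your proposal follows essentially the same route as the paper's proof: bound $a(0)\la^q\le H(0,\la)$ via \ref{d}, H\"older's inequality and the higher integrability on $Q_{2R}(z_0)$, use the volume of $G_\rho^\la$ together with the lower bound $H(0,\la)\ge\la^p$ coming from \ref{iii} to obtain $\rho^{n+2}\le c_R\,\la^{-\frac{p(n+2)-2n}{2}-p\varepsilon_0}$, and then combine \eqref{range_pq} with the $\varepsilon_0$-slack to retain a strictly positive power of $\rho$. (The parenthetical inequality $\la^q\le\|a\|_{L^\infty}^{-1}H(0,\la)$ is backwards, but it plays no role in your actual chain of estimates.)
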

\begin{proof}
   The proof is also analogous to the proof of Lemma~\ref{p_lem}. Since $Q_{4R}(z_0)\subset \Om_T$, Theorem~\ref{higher} gives
   \[
   \iint_{Q_{2R}(z_0)}(H(z,|\na u|))^{1+\varepsilon_0}\,dz\le c_R,
   \]
   where $\varepsilon_0=\varepsilon_0(\data)$ and $c_R=c_R(\data,\| a\|_{L^\infty(\Om_T)}, \|H(z,|F|)\|_{L^{1+\varepsilon_0}(\Om_T) },R)$. Therefore, it follows from  \ref{d} and $G_{\rho}^\la\subset Q_{2R}(z_0)$ that
   \begin{align*}
       \begin{split}
           a(0)\la^q
           &\le\fiint_{G_{\rho}^{\la}}\left(H(z,|\na u|)+\delta^{-1}H(z,|F|)\right)\,dz\\
           &\le \left(\fiint_{G_{\rho}^{\la}}\left(H(z,|\na u|)+\delta^{-1}H(z,|F|)\right)^{1+\varepsilon_0}\,dz\right)^\frac{1}{1+\varepsilon_0}\\
           &\le c_R|G_{\rho}^{\la}|^{-\frac{1}{1+\varepsilon_0}}\\
           &\le c_R \Bigl(\la^{\frac{n(p-2)}{2}+p}(\la^p+a(0)\la^q)^{-1}\rho^{n+2}\Bigr)^{-\frac{1}{1+\varepsilon_0}}\\
           &\le c_R \Bigl(\la^{\frac{n(p-2)}{2}+p}(a(0)\la^q)^{-1}\rho^{n+2}\Bigr)^{-\frac{1}{1+\varepsilon_0}}.
       \end{split}
   \end{align*}
   Dividing both side by $a(0)\la^q\rho^{-\frac{n+2}{1+\varepsilon_0}}$ and using $\la^{p}\le a(0)\la^q$, we obtain
   \begin{align*}
       \begin{split}
           \rho^\frac{n+2}{1+\varepsilon_0}
           &\le c_R (\la^{\frac{n(p-2)}{2}+p} (a(0)\la^q)^{\varepsilon_0}  )^{-\frac{1}{1+\varepsilon_0}}\\
           &\le c_R (\la^{\frac{n(p-2)}{2}+p+\varepsilon_0p} )^{-\frac{1}{1+\varepsilon_0}}\\
           &=c_R (\la^{\frac{p(n+2)-2n}{2}+\varepsilon_0p})^{-\frac{1}{1+\varepsilon_0}}.
       \end{split}
   \end{align*}
   It follows that
   \[
   \rho^{\alpha}\le c_R\la^{-\left(\frac{\alpha(p(n+2)-2n)}{2(n+2)}+\frac{\alpha\varepsilon_0p}{n+2}\right)}
   \]
   and therefore, we apply \eqref{range_pq} to have
   \[
   c_\delta\rho^\alpha\la^q\le c_\delta c_R \rho^{\alpha\left( 1-\left(\frac{\alpha(p(n+2)-2n)}{2(n+2)}+\frac{\alpha\varepsilon_0p}{n+2}\right)^{-1}\left(\frac{\alpha(p(n+2)-2n)}{2(n+2)}\right)   \right)}\la^p.
   \]
   Observing
   \[
   1-\left(\frac{\alpha(p(n+2)-2n)}{2(n+2)}+\frac{\alpha\varepsilon_0p}{n+2}\right)^{-1}\left(\frac{\alpha(p(n+2)-2n)}{2(n+2)}\right)>0,
   \]
   we take $\rho_0$ small enough depending on the above exponent, $c_R$ and $c_\delta$ to deduce the conclusion.
\end{proof}

Let $\zeta\in C(J^\la_{4V\rho};L^2(B^\la_{4V\rho},\RR^N))\cap L^q(J_{4V\rho}^\la;W^{1,q}(B_{4V\rho}^\la,\RR^N))$ be the weak solution to
\[\begin{cases}
        \zeta_t-\dv (b(z)\mA(z,\na \zeta))=0&\text{in}\quad G_{4V\rho}^{\la},\\
        \zeta=u&\text{on}\quad\pa_p G_{4V\rho}^{\la}.
    \end{cases}\]

\begin{lemma}\label{q_c1}
    There exist $\delta=\delta(\data,\epsilon)\in(0,1)$ and $\rho_0=\rho_0(\data,
    \|H(z,|F|)\|_{L^{1+\varepsilon_0}(\Om_T)},\delta,\epsilon)\in (0,1)$ such that
    \[\frac{1}{|G_{\rho}^\lambda|}\iint_{G_{V\rho}^\la} H(z,|\na u-\na \zeta|)\,dz\le \frac{1}{2^{q} 3}\epsilon H(0,\la).\]
    Also, there exits $c=c(n,N,p,q,\nu,L)$ such that
    \[\fiint_{G_{4V\rho}^\la} H(z,|\na\zeta|)\,dz\le cH(0,\la).\]
\end{lemma}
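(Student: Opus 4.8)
The plan is to mimic the structure of Lemma~\ref{p_c1}, but now in the $(p,q)$-intrinsic geometry where the coefficient $a(\cdot)$ is comparable by Lemma~\ref{coeff_com}, so that $H(z,s)$ behaves like $H(0,s)$ up to absolute constants on $G_{4V\rho}^\la$. First I would set up the energy estimate: test the equation for $u-\zeta$, namely
\[
(u-\zeta)_t-\dv\bigl(b(z)(\mathcal{A}(z,\na u)-\mathcal{A}(z,\na \zeta))\bigr)=\dv\mathcal{A}(z,F),
\]
with $u-\zeta$ itself (which vanishes on $\pa_pG_{4V\rho}^\la$), and invoke the monotonicity of $\mathcal{A}(z,\cdot)$ together with the ellipticity \eqref{ellipticity} to obtain, after a standard absorption via Young's inequality, a bound of the form
\[
\sup_{t\in J_{4V\rho}^\la}\fint_{B_{4V\rho}^\la}\frac{|u-\zeta|^2}{(\tfrac{\la^p}{H(0,\la)})(4V\rho)^2}\,dx\cdot\text{(weight)}+\fiint_{G_{4V\rho}^\la}H(z,|\na u-\na\zeta|)\,dz\le c\fiint_{G_{4V\rho}^\la}H(z,|F|)\,dz,
\]
the precise form of the $\sup$-term being whatever comes out of the scaled Caccioppoli argument of \cite[Lemma~3.4]{K2024}; the point is only that the gradient term is controlled by $c\fiint_{G_{4V\rho}^\la}H(z,|F|)\,dz$ with $c=c(n,N,p,q,\nu,L)$. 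The technical subtlety here, just as in the $p$-intrinsic case, is that $G_\rho^\la$ is genuinely intrinsic in both space and time, so the correct scaling $\zeta_\la(x,t)=\tfrac{1}{\la^{p/2}\rho}\zeta(\la^{(p-2)/2}\rho x,\tfrac{\la^p}{H(0,\la)}\rho^2 t)$ (the analogue of \eqref{p_scale} with the $(p,q)$-time dilation) must be used when quoting the reference energy estimate, and the comparability of $a$ from Lemma~\ref{coeff_com} is what keeps $H_\la(z,\cdot)\sim H(0,\cdot)/\la^p$ uniformly.

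Next I would convert the right-hand side into something of the correct intrinsic size using the stopping-time inequality \ref{c}: since $G_{16V\rho}^\la\subset Q_{2R}(z_0)$ and (after noting $G_{V\rho}^\la\subset G_{16V\rho}^\la$, with volume ratios controlled by $V=9K$) one has $\fiint_{G_{16V\rho}^\la}H(z,|F|)\,dz\le \delta\,H(0,\la)$, hence
\[
\fiint_{G_{4V\rho}^\la}H(z,|F|)\,dz\le c(V)\,\delta\,H(0,\la),
\]
where the constant absorbing the passage from $G_{4V\rho}^\la$ to $G_{16V\rho}^\la$ is a fixed power of $V=9K$, i.e.\ of $K$ from \eqref{K_def}. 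Combining this with the energy estimate gives both claimed bounds at once: for the second, a triangle inequality $H(z,|\na\zeta|)\le c\,H(z,|\na u-\na\zeta|)+c\,H(z,|\na u|)$ together with \ref{c} (which bounds $\fiint H(z,|\na u|)\le H(0,\la)$) yields $\fiint_{G_{4V\rho}^\la}H(z,|\na\zeta|)\,dz\le cH(0,\la)$ with $c=c(n,N,p,q,\nu,L)$; for the first, we have
\[
\frac{1}{|G_\rho^\la|}\iint_{G_{V\rho}^\la}H(z,|\na u-\na\zeta|)\,dz\le \frac{|G_{4V\rho}^\la|}{|G_\rho^\la|}\fiint_{G_{4V\rho}^\la}H(z,|\na u-\na\zeta|)\,dz\le c\,K^{n+2}\,\delta\,H(0,\la),
\]
and we choose $\delta=\delta(\data,\ep)$ small enough — exactly as in the $\alpha<1$ part of Lemma~\ref{p_c1}, and using the Hölder-inequality trick on $\iint_{Q_{2\rho_0}(z_0)}H(z,|F|)\,dz$ together with smallness of $\rho_0$ in the $\alpha=1$ case — so that $c\,K^{n+2}\delta\le \tfrac{1}{2^q3}\ep$.

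The one place requiring care, and what I expect to be the main obstacle, is bookkeeping the volume ratio $|G_{4V\rho}^\la|/|G_\rho^\la|$ and the factor $K^{n+2}$ coming from $V=9K$: because $K$ depends on the $L^1$ mass of $H(z,|\na u|)+\delta^{-1}H(z,|F|)$ through \eqref{K_def}, and $\delta$ feeds back into $K$, one must check that the smallness of $\delta$ can still be arranged — this is precisely the $\delta^{1/(n+2)}K$ computation done at the end of the proof of Lemma~\ref{p_c1}, which goes through verbatim here since the $(p,q)$-cylinder $G_\rho^\la$ has the same space-dilation $\la^{(p-2)/2}\rho$ as $Q_\rho^\la$, only the time-length differs, and the time-length cancels in the ratio $|G_{4V\rho}^\la|/|G_\rho^\la|=(4V)^{n+2}$. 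Everything else (the monotonicity inequality, Young absorption, triangle inequality for $H$) is routine; the VMO hypothesis \eqref{vmo} and the $a$-Hölder continuity are not yet needed at this comparison step — they enter only in the subsequent comparison maps — so no difficulty arises from them here. I would therefore present the proof as: (i) scaled Caccioppoli/energy estimate quoting \cite[Lemma~3.4]{K2024}, (ii) use \ref{c} to bound the data term by $c\delta H(0,\la)$, (iii) triangle inequality and \ref{c} again for the $\na\zeta$ bound, (iv) choose $\delta$ (and then $\rho_0$ in the $\alpha=1$ case) to absorb $cK^{n+2}$ into $\tfrac{1}{2^q3}\ep$.
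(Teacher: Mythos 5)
Your proposal follows essentially the same route as the paper: test $u-\zeta$ against the difference equation to get $\fiint_{G_{4V\rho}^\la}H(z,|\na u-\na\zeta|)\,dz\le c\fiint_{G_{4V\rho}^\la}H(z,|F|)\,dz\le c\delta H(0,\la)$ via the stopping-time bound, use the triangle inequality for the $\na\zeta$ estimate, and absorb the volume-ratio factor $cV^{n+2}\delta$ into $\tfrac{1}{2^q3}\ep$ exactly as in the proof of Lemma~\ref{p_c1}. Your bookkeeping of $|G_{4V\rho}^\la|/|G_\rho^\la|=(4V)^{n+2}$ and the $\delta$-versus-$K$ feedback matches the paper's argument, so the proof is correct.
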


\begin{proof}
As in Lemma~\ref{p_c1}, we test $u-\zeta$ to 
\[
(u-\zeta)_t-\dv(b(\mathcal{A}(z,\na u)-\mathcal{A}(z,\na \zeta)))=\dv\mathcal{A}(z,F)
\]
in $G_{4V\rho}^\la$ and obtain
\[
\fiint_{G_{4V\rho}^\la} H(z,|\na u-\na \zeta|)\,dz\le c\fiint_{G_{4V\rho}^\la}H(z,|F|)\,dz\le c\delta H(0,\la),
\]
where $c=c(n,N,p,q,\nu,L)$. Following the same argument in the proof of Lemma~\ref{p_c1}, the triangle inequality and \ref{c} give
\begin{align*}
    \begin{split}
        \fiint_{G_{4V\rho}^\la} H(z,|\na\zeta|)\,dz
        &\le c\fiint_{G_{4V\rho}^\la} H(z,|\na\zeta-\na u|)\,dz+c\fiint_{G_{4V\rho}^\la} H(z,|\na u|)\,dz\\
        &\le c\fiint_{G_{4V\rho}^\la}(H(z,|F|)+H(z,|\na u|))\,dz\\
        &\le cH(0,\la).
    \end{split}
\end{align*}
On the other hand, the estimate for the right hand side of
\[
\frac{1}{|G_{\rho}^\la|}\iint_{G_{4V\rho}^\la} H(z,|\na u-\na \zeta|)\,dz\le cV^{n+2}\delta H(0,\la)
\]
is the same as in the proof of Lemma~\ref{p_c1}. We omit the details.
\end{proof}

Next, consider the weak solution $\eta\in C(J^\la_{4V\rho};L^2(B_{4V\rho},\RR^N))\cap L^q(J^\la_{4V\rho};W^{1,q}(B_{4V\rho},\RR^N))$ to 
\begin{align*}
    \begin{cases}
        \eta_t-\dv(b(z)\mA(0,\na \eta))=0&\text{in}\quad G_{4V\rho}^\la,\\
        \eta=\zeta&\text{on}\quad \pa_p G_{4V\rho}^\la.
    \end{cases}
\end{align*}

\begin{lemma}\label{q_c2}
       There exists $\rho_0=\rho_0(\data,\|a\|_{\infty},\|H(z,|F|)\|_{L^{1+\varepsilon_0}(\Om_T)},\delta,\epsilon)\in(0,1)$ such that if $\rho\in (0,\rho_0)$, then
    \[\frac{1}{|G_{\rho}^\lambda|}\iint_{G_{V\rho}^\la}H(z,|\na \zeta-\na \eta|)\,dz\le \frac{1}{2^{2q}3}\epsilon H(0,\la).\]
    Also, there exists $c=c(n,N,p,q,\nu,L)$ such that
    \[
    \fiint_{G_{4V\rho}^\la}|\na \eta|^q\,dz\le c \la^q.\]
\end{lemma}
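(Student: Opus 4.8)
The plan is to mimic the structure of Lemma~\ref{p_c2} but now working in the $(p,q)$-intrinsic cylinder $G_{4V\rho}^\la$, where by Lemma~\ref{coeff_com} the coefficient $a(\cdot)$ is comparable to $a(0)$, so the system genuinely behaves like a $(p,q)$-Laplace system with frozen ellipticity. First I would derive the comparison estimate: test the equation satisfied by $\zeta-\eta$, namely
\[
\partial_t(\zeta-\eta)-\dv\bigl(b(z)(\mathcal{A}(0,\na\zeta)-\mathcal{A}(0,\na\eta))\bigr)=-\dv\bigl(b(z)(\mathcal{A}(z,\na\zeta)-\mathcal{A}(0,\na\zeta))\bigr)
\]
in $G_{4V\rho}^\la$ with test function $\zeta-\eta$. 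The monotonicity of $\xi\mapsto\mathcal{A}(0,\xi)$ gives control of $\fiint_{G_{4V\rho}^\la}H(0,|\na\zeta-\na\eta|)\,dz$ (equivalently, up to the comparability of $a$, of $\fiint H(z,|\na\zeta-\na\eta|)\,dz$) by a term of the form $\fiint_{G_{4V\rho}^\la}|\mathcal{A}(z,\na\zeta)-\mathcal{A}(0,\na\zeta)|\,|\na\zeta-\na\eta|\,dz$. Since only the $q$-Laplace part carries the $z$-dependence, $|\mathcal{A}(z,\na\zeta)-\mathcal{A}(0,\na\zeta)|=|a(z)-a(0)||\na\zeta|^{q-1}\le[a]_\alpha(4V\rho)^\alpha|\na\zeta|^{q-1}$; applying Young's inequality (splitting off a small multiple of $a(0)|\na\zeta-\na\eta|^q$ to be absorbed, using $a(z)\simeq a(0)$) leaves a right-hand side bounded by $c_\delta\rho^\alpha\fiint_{G_{4V\rho}^\la}|\na\zeta|^q\,dz$. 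Here I would use the energy bound from Lemma~\ref{q_c1}, $\fiint_{G_{4V\rho}^\la}H(z,|\na\zeta|)\,dz\le cH(0,\la)$, which since $a(0)\la^q\le H(0,\la)\le 2a(0)\la^q$ in the $(p,q)$-intrinsic case (by \ref{iii}) controls $a(0)\fiint|\na\zeta|^q\,dz\le cH(0,\la)$, hence $\fiint|\na\zeta|^q\,dz\le c a(0)^{-1}H(0,\la)\le c\la^q$. Thus the right-hand side is $\le c_\delta\rho^\alpha a(0)^{-1}\cdot a(0)\la^q$; but one must be careful that $a(0)^{-1}$ can be large. Instead I would keep $a(0)\fiint|\na\zeta|^q$ grouped: the bound becomes $c_\delta\rho^\alpha\,(a(0)\fiint|\na\zeta|^q\,dz)\le c_\delta\rho^\alpha H(0,\la)$ — no, $\rho^\alpha$ alone is not enough because $\la$ grows. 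The correct normalization, exactly as in Lemma~\ref{p_lem}/Lemma~\ref{q_lem}, is to convert $\rho^\alpha\la^q$ into $\epsilon\la^p\le\epsilon H(0,\la)/a(0)\cdot a(0)$... so one invokes Lemma~\ref{q_lem} with the constant $c_\delta$ to get $c_\delta\rho^\alpha\la^q\le\frac{1}{(2V)^{n+2}2^{2q}3}\epsilon\la^p\le\frac{\epsilon}{(2V)^{n+2}2^{2q}3}H(0,\la)$, and then dividing by $|G_\rho^\la|/|G_{4V\rho}^\la|\simeq(4V)^{n+2}$ yields the claimed $\frac{1}{2^{2q}3}\epsilon H(0,\la)$ after taking $\rho_0$ small.

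For the second assertion, the uniform $L^q$ bound $\fiint_{G_{4V\rho}^\la}|\na\eta|^q\,dz\le c\la^q$, I would use the scaling argument in the $(p,q)$-intrinsic cylinder (the analogue of \eqref{p_scale}–Lemma~\ref{p_sc_lem}, now using the time scale $\la^p/H(0,\la)$ that defines $J_\rho^\la$) to transform $\eta$ into a solution $\eta_\la$ of a $(p,q)$-system on a standard cylinder with $\|a_\la\|_\infty\lesssim1$ (using Lemma~\ref{coeff_com} and Lemma~\ref{q_lem} so that $[a_\la]_\alpha$ stays bounded) and $\fiint H_\la(z,|\na\eta_\la|)\,dz\le c$. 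Then the higher-integrability-type $L^q$-gradient estimate — cf.\ \cite[Lemma~4.2]{MR3532237} when $\alpha\in(0,1)$, and for $\alpha=1$ the same trick as in Lemma~\ref{p_lq_est} of choosing a slightly smaller H\"older exponent $\tilde\alpha$ with $\tilde\alpha p/(n+2)>(p(n+2)-2n)/(2(n+2))$ — gives $\fiint|\na\eta_\la|^q\,dz\le c$, and scaling back produces $\fiint_{G_{4V\rho}^\la}|\na\eta|^q\,dz\le c\la^q$. The energy input for this step is the analogue of Lemma~\ref{q_c1} applied to $\eta$ via the comparison just proved together with $\fiint H(z,|\na\zeta|)\,dz\le cH(0,\la)$, giving $\fiint_{G_{4V\rho}^\la}H(z,|\na\eta|)\,dz\le cH(0,\la)$ and hence $a(0)\fiint|\na\eta|^q\le cH(0,\la)$, i.e.\ $\fiint|\na\eta|^q\le c\la^q$ already — but the point of the scaling/higher-integrability step is to upgrade this to a bound on a \emph{fixed} fraction $G_{4V\rho}^\la$ with a universal constant (no loss from the VMO smallness), which is what is used downstream for freezing the coefficient $b$.

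The main obstacle I anticipate is bookkeeping the intrinsic scaling correctly: in the $(p,q)$-intrinsic geometry both the spatial radius ($\la^{(p-2)/2}\rho$) and the time length ($\frac{\la^p}{H(0,\la)}\rho^2$) are rescaled, and after freezing $a$ at $a(0)$ one must verify that the rescaled coefficient $a_\la=\la^{q-p}a(\la^{(p-2)/2}\rho\,x,\,\frac{\la^p}{H(0,\la)}\rho^2 t)$ has $L^\infty$ norm and H\"older seminorm bounded by data — this is exactly where Lemma~\ref{coeff_com} (comparability, so $\la^{q-p}a(0)\simeq H(0,\la)/(a(0)\la^p)\cdot a(0)$... rather $\la^{q-p}a(0)=a(0)\la^q/\la^p\le H(0,\la)/\la^p$ which is $O(1)$ only after dividing by $H(0,\la)$ in the scaling of $H$) and Lemma~\ref{q_lem} (to kill the extra $\rho^\alpha$ factor in $[a_\la]_\alpha$) are needed. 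A secondary subtlety is that $H(0,\la)$ rather than $\la^p$ is the natural right-hand side throughout, so every absorption and every application of Young's inequality must be done with the full $H(0,\cdot)$ and only converted to $\la^p$ at the very end via \ref{iii}; I would carry $H(0,\la)$ symbolically and cite Lemma~\ref{q_lem} verbatim for the one place where a genuine smallness in $\rho_0$ is consumed.
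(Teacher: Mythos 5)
Your proposal for the first inequality is essentially the paper's proof: test $\zeta-\eta$ against the difference equation, observe that only the $q$-part carries the $z$-dependence so the error term is $|a(z)-a(0)||\na\zeta|^{q-1}$, absorb via Young's inequality, bound $\fiint|\na\zeta|^q\le c\la^q$ from Lemma~\ref{q_c1} together with \ref{iii} and Lemma~\ref{coeff_com}, and kill the resulting $c\rho^\alpha\la^q$ with Lemma~\ref{q_lem}. For the second assertion the paper simply uses the triangle inequality with the estimates just obtained — exactly the "already" bound you note at the end of your second paragraph — so the scaling and higher-integrability machinery you sketch there is not needed here (it is the content of the subsequent Lemma~\ref{q_high}, on the smaller cylinder, for the VMO freezing of $b$).
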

\begin{proof}
    Again by taking $\zeta-\eta$ as a test function to 
    \[
    (\zeta-\eta)_t-\dv(b(\mathcal{A}(0,\na \zeta) -\mathcal{A}(0,\na \eta) ))=\dv(b (a(0)-a(z))|\na \zeta|^{q-2}\na \zeta)
    \]
    in $G_{4V\rho}^\la$ and following the proof in Lemma~\ref{p_c3}, we get
    \[
    \fiint_{G_{4V\rho}^\la}H(0,|\na \zeta-\na \eta|)\,dz\le c\fiint_{G_{4V\rho}^\la}b(z)|a(0)-a(z)||\na \zeta|^q\,dz.
    \]
    Note that by \ref{iii}, \ref{c}, Lemma~\ref{coeff_com} and Lemma~\ref{q_c1}, we have
    \begin{align*}
        \begin{split}
            \fiint_{G_{4V\rho}^\la}a(0)|\na \zeta|^q\,dz
            &\le \fiint_{G_{4V\rho}^\la}2a(z)|\na \zeta|^q\,dz\\
            &\le cH(0,\la)\\
            &\le c a(0)\la^q.
        \end{split}
    \end{align*}
   Therefore we obtain
   \[
   \fiint_{G_{4V\rho}^\la}|\na \zeta|^q\,dz\le c\la^q.
   \]
    Applying \eqref{ellipticity}, \eqref{coeff} and the above inequality, it follows that
    \[
    \fiint_{G_{4V\rho}^\la}H(0,|\na \zeta-\na \eta|)\,dz\le c(V\rho)^\alpha\la^q.
    \]
    Moreover, the first inequality of this lemma follows from Lemma~\ref{coeff_com} and Lemma~\ref{q_lem}. Meanwhile, the second inequality also follows from the triangle inequality and the above estimates.
\end{proof}

To derive the comparison estimate with the frozen coefficient $b(z)$, we will again employ the estimate of the higher integrability. To do this, we set
  \begin{align*}
  \begin{split}
      &\eta_{\la}(x,t)=\tfrac{1}{\la^\frac{p}{2}\rho}\eta\bigl(\la^{\frac{p-2}{2}}\rho x,\tfrac{\la^p}{H(0,\la)}\rho^2t\bigr),\\
      &b_\la(x,t)=b\bigl(\la^{\frac{p-2}{2}}\rho x,\tfrac{\la^p}{H(0,\la)}\rho^2t\bigr),\\
      &\mA_\la(0,\xi)=\tfrac{\la}{H(0,\la)}(\la^{p-1}|\xi|^{p-2}\xi+a(0)\la^{q-1}|\xi|^{q-2}\xi),\\
  \end{split}
    \end{align*}
    for $(x,t)\in Q_{4V}$.
\begin{lemma}\label{q_sc_lem}
    The scaled map $\eta_\la$ is  a weak solution to
    \[
    \pa_t \eta_\la-\dv(b_\la(z)\mathcal{A}_\la(0,\na \eta_\la))=0\quad\text{in}\quad Q_{4V}.
    \]
    Moreover, we have
    \[
    \fiint_{Q_{4V}}|\na \eta_\la|^q\,dz=\frac{1}{\la^q}\fiint_{G_{4V}^\la}|\na \eta|^q\,dz.
    \]
\end{lemma}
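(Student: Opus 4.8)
The plan is to prove both claims by the same change of variables used for Lemma~\ref{p_sc_lem}, now adapted to the $(p,q)$-intrinsic cylinder. Introduce the affine map $\Phi(x,t)=\bigl(\la^{\frac{p-2}{2}}\rho\,x,\ \tfrac{\la^p}{H(0,\la)}\rho^2\,t\bigr)$, so that $\eta_\la(x,t)=\la^{-\frac p2}\rho^{-1}\eta(\Phi(x,t))$. By the definition \eqref{def_p_cylinder} of $B^\la_\rho$ and \eqref{def_G_cylinder} of $J^\la_\rho$, the map $\Phi$ takes $Q_{4V}=B_{4V}\times I_{4V}$ diffeomorphically onto $G^\la_{4V\rho}=B^\la_{4V\rho}\times J^\la_{4V\rho}$, with constant Jacobian $|\det D\Phi|=\la^{\frac{n(p-2)}{2}}\rho^{n}\cdot\tfrac{\la^p}{H(0,\la)}\rho^2=|G^\la_{4V\rho}|/|Q_{4V}|$. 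The chain rule yields the key gradient identity $\na\eta_\la(x,t)=\la^{-1}(\na\eta)(\Phi(x,t))$; everything else will follow from this identity together with the homogeneity of the vector fields $\xi\mapsto|\xi|^{p-2}\xi$ and $\xi\mapsto|\xi|^{q-2}\xi$.

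The second identity of the lemma is then immediate: raising the gradient identity to the power $q$, changing variables via $\Phi$, and dividing by $|Q_{4V}|$ gives $\fiint_{Q_{4V}}|\na\eta_\la|^q\,dz=\la^{-q}\fiint_{G^\la_{4V\rho}}|\na\eta|^q\,dz$. For the PDE, fix $\varphi\in C_0^\infty(Q_{4V})$ and use $\tilde\varphi=\varphi\circ\Phi^{-1}\in C_0^\infty(G^\la_{4V\rho})$ as a test function in the weak formulation of $\eta$, namely $\iint_{G^\la_{4V\rho}}\bigl(-\eta\cdot\tilde\varphi_s+b\,\mA(0,\na\eta)\cdot\na\tilde\varphi\bigr)\,dz=0$. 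Changing variables back to $(x,t)$ and using $b\circ\Phi=b_\la$, $\na\eta\circ\Phi=\la\,\na\eta_\la$ and $\eta\circ\Phi=\la^{\frac p2}\rho\,\eta_\la$, one finds that the parabolic term produces the constant factor $\la^{\frac p2+\frac{n(p-2)}2}\rho^{n+1}$ (the factor $H(0,\la)$ coming out of $\tilde\varphi_s$ cancelling the one in the Jacobian), while the elliptic term produces $\la^{\frac p2+1+\frac{n(p-2)}2}\rho^{n+1}/H(0,\la)$ times $\iint_{Q_{4V}} b_\la\bigl(\la^{p-1}|\na\eta_\la|^{p-2}\na\eta_\la+a(0)\la^{q-1}|\na\eta_\la|^{q-2}\na\eta_\la\bigr)\cdot\na\varphi\,dz$.

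Since $\mA_\la(0,\xi)$ was defined precisely as $\tfrac{\la}{H(0,\la)}\bigl(\la^{p-1}|\xi|^{p-2}\xi+a(0)\la^{q-1}|\xi|^{q-2}\xi\bigr)$, the prefactor of the elliptic term is exactly $\la^{\frac p2+\frac{n(p-2)}2}\rho^{n+1}$ times $\iint_{Q_{4V}} b_\la\,\mA_\la(0,\na\eta_\la)\cdot\na\varphi\,dz$, i.e.\ the same nonzero constant as the parabolic term. Dividing the transformed identity through by that constant gives $\iint_{Q_{4V}}\bigl(-\eta_\la\cdot\varphi_t+b_\la\,\mA_\la(0,\na\eta_\la)\cdot\na\varphi\bigr)\,dz=0$ for all such $\varphi$, which is the asserted weak formulation; that $b_\la$ still satisfies \eqref{ellipticity} and that $\eta_\la$ lies in the corresponding energy space is clear from the composition, and the solvability and admissibility of the test class may, if preferred, be quoted from \cite[Lemma~3.5]{K2024} as in Lemma~\ref{p_sc_lem}. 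The only genuine obstacle is the bookkeeping of the powers of $\la$, $\rho$ and the cancellation of $H(0,\la)$ in the elliptic term — this cancellation is exactly what dictates the normalization $\tfrac{\la}{H(0,\la)}$ in the definition of $\mA_\la(0,\cdot)$; once that is in place the computation is routine.
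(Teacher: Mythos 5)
Your proof is correct and is exactly the change-of-variables computation that the paper delegates to \cite[Lemma~3.16]{K2024} (the paper's own ``proof'' is just that citation with $\rho$ replaced by $\la^{\frac{p-2}{2}}\rho$); your bookkeeping of the Jacobian, the prefactors $\la^{\frac p2+\frac{n(p-2)}{2}}\rho^{n+1}$, and the cancellation of $H(0,\la)$ against the normalization of $\mA_\la(0,\cdot)$ all check out. The only cosmetic remark is that the $G_{4V}^\la$ in the lemma statement should be read as $G_{4V\rho}^\la(\om)$, as you correctly did.
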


\begin{proof}
    The proof is in \cite[Lemma~3.16]{K2024}. It is enough to replace $\rho$ therein by $\la^{\frac{p-2}{2}}\rho$ for this intrinsic geometry.
\end{proof}

\begin{lemma}\label{q_high}
    There exist $\varepsilon_0=\varepsilon_0(n,N,q,\nu,L)$   and $c=c(n,N,p,q,\nu,L)$ such that 
  \[
  \fiint_{G_{2V\rho}^\la}|\na \eta|^{q(1+\varepsilon_0)}\,dz\le c \la^{q(1+\varepsilon_0)}.
  \]
\end{lemma}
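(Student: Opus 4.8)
The plan is to transfer the higher integrability to the scaled map $\eta_\la$ on $Q_{4V}$ and apply a $q$-Laplace type higher integrability result, then scale back. First I would record, via Lemma~\ref{q_sc_lem}, that $\eta_\la$ solves $\pa_t\eta_\la-\dv(b_\la(z)\mathcal{A}_\la(0,\na\eta_\la))=0$ in $Q_{4V}$, where the operator $\mathcal{A}_\la(0,\xi)=\tfrac{\la}{H(0,\la)}(\la^{p-1}|\xi|^{p-2}\xi+a(0)\la^{q-1}|\xi|^{q-2}\xi)$ is a \emph{genuine $(p,q)$-Laplace type operator with frozen (constant-in-space) coefficients}: it has no $x$- or $t$-dependence apart from the bounded measurable factor $b_\la$ satisfying \eqref{ellipticity}. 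The key quantitative point is that $\tfrac{\la^p}{H(0,\la)}\le 1$ and $\tfrac{a(0)\la^q}{H(0,\la)}\le 1$, so the structure constants of $\mathcal{A}_\la(0,\cdot)$ are controlled purely by $n,N,p,q,\nu,L$; in particular, since $K^2\la^p< a(0)\la^q$ from \ref{iii}, we also have $\tfrac{a(0)\la^q}{H(0,\la)}$ bounded below away from $0$, so the $q$-growth genuinely dominates and $\mathcal{A}_\la(0,\cdot)$ is comparable to a pure $q$-Laplace operator with universal constants.

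Next I would establish the energy bound for $\eta_\la$ on $Q_{4V}$: from Lemma~\ref{q_sc_lem} and the second estimate of Lemma~\ref{q_c2},
\[
\fiint_{Q_{4V}}|\na\eta_\la|^q\,dz=\frac{1}{\la^q}\fiint_{G_{4V\rho}^\la}|\na\eta|^q\,dz\le c,
\]
with $c=c(n,N,p,q,\nu,L)$, and similarly a bound on $\sup_t\fint|\eta_\la|^2$ and $\fiint|\eta_\la|^p$ by scaling the sup/$L^p$ bounds coming from the comparison estimates (or by a direct Caccioppoli argument for $\eta_\la$). Then I apply the intrinsic higher integrability for parabolic $q$-Laplace systems — e.g. the Gehring-type lemma underlying Theorem~\ref{higher} specialized to $a\equiv 0$ and exponent $q$, which is exactly the classical result of \cite{MR1230384} — to obtain $\varepsilon_0=\varepsilon_0(n,N,q,\nu,L)\in(0,1)$ and $c=c(n,N,p,q,\nu,L)$ with
\[
\fiint_{Q_{2V}}|\na\eta_\la|^{q(1+\varepsilon_0)}\,dz\le c\left(\fiint_{Q_{4V}}|\na\eta_\la|^q\,dz+1\right)^{\kappa}\le c
\]
for a universal exponent $\kappa$; note here $Q_{2V}$ versus $G_{2V\rho}^\la$ corresponds under the scaling $(x,t)\mapsto(\la^{(p-2)/2}\rho x,\tfrac{\la^p}{H(0,\la)}\rho^2 t)$. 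Finally, scaling back via $|\na\eta_\la|^{q(1+\varepsilon_0)}(x,t)=\la^{-q(1+\varepsilon_0)}\rho^{-q(1+\varepsilon_0)}\cdots$ — more precisely using the identity analogous to Lemma~\ref{q_sc_lem} at exponent $q(1+\varepsilon_0)$, $\fiint_{Q_{2V}}|\na\eta_\la|^{q(1+\varepsilon_0)}\,dz=\la^{-q(1+\varepsilon_0)}\fiint_{G_{2V\rho}^\la}|\na\eta|^{q(1+\varepsilon_0)}\,dz$ — yields the claimed bound $\fiint_{G_{2V\rho}^\la}|\na\eta|^{q(1+\varepsilon_0)}\,dz\le c\la^{q(1+\varepsilon_0)}$.

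The main obstacle I anticipate is not the scaling bookkeeping but verifying that the higher-integrability constants are genuinely \emph{universal}, i.e. independent of $\la$, $a(0)$, and $\rho$. This requires checking that $\mathcal{A}_\la(0,\cdot)$ satisfies $(p,q)$-ellipticity/growth with constants depending only on $\nu,L,p,q$ (using $\la^p\le H(0,\la)$, $a(0)\la^q\le H(0,\la)$, and $\la\ge 1$ so $\la^{p-1},\la^{q-1}\ge 1$ combine correctly with the prefactor $\tfrac{\la}{H(0,\la)}$), and that the reference higher-integrability theorem for the $(p,q)$- (or $q$-) Laplace system applies with the natural intrinsic cylinders $Q_{2V}\subset Q_{4V}$ used here; since the coefficients are frozen this is standard, but the write-up should make the $\la$-uniformity explicit. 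One could alternatively phrase the whole step by noting $\eta$ solves a $q$-Laplace-type system with coefficient $a(0)$ on $G_{4V\rho}^\la$ and invoke Theorem~\ref{higher}-type estimates directly in the $G$-geometry; I prefer the scaled formulation because Lemma~\ref{q_sc_lem} already puts everything on standard parabolic cylinders.
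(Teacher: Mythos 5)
Your proposal is correct and follows essentially the same route as the paper: use \ref{iii} to show $\tfrac{a(0)\la^q}{H(0,\la)}\ge\tfrac12$ so that $\mathcal{A}_\la(0,\cdot)$ is a $q$-Laplace type operator with universal structure constants, feed the energy bound from Lemma~\ref{q_c2} and Lemma~\ref{q_sc_lem} into the classical higher integrability for parabolic $q$-Laplace systems, and scale back. The only cosmetic difference is the attribution of the higher integrability result (the paper invokes Kinnunen--Lewis \cite{MR1749438} rather than \cite{MR1230384}) and your extra, unneeded bounds on $\sup_t\fint|\eta_\la|^2$ and $\fiint|\eta_\la|^p$.
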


\begin{proof}
    Note that by applying \ref{iii}, we have
    \[
    \frac{1}{2}|\xi|^q\le\frac{a(0)\la^q}{H(0,\la)}|\xi|^q\le\frac{\la^p}{H(0,\la)}|\xi|^p+\frac{a(0)\la^q}{H(0,\la)}|\xi|^q=\mathcal{A}_\la(0,\xi)\cdot\xi 
    \]
    and similarly, we also have
    \[
    \mathcal{A}_\la(0,\xi)\cdot\xi\le  \frac{\la^p}{\la^p}|\xi|^p+\frac{a(0)\la^q}{a(0)\la^q}|\xi|^q\le 2(|\xi|+1)^q.
    \]
    Therefore $\mathcal{A}_\la(0,\xi)$ is $q$-Laplace type operator. The higher integrability of parabolic $p$-Laplace system in \cite{MR1749438} leads to
    \[
    \fiint_{Q_{2V}}|\na \eta_\la|^{q(1+\varepsilon_0)}\,dz\le c\left( \fiint_{Q_{4V}}|\na \eta_\la|^{q}\,dz+1 \right)^{1+\frac{2q\varepsilon_0}{q(n+2)-2n}},
    \]
    where $c=c(n,N,q,\nu,L)$ and $\varepsilon_0=\varepsilon_0(n,N,q,\nu,L)$. Since the right hand side of the above inequality is bound above by $c=c(n,N,p,q,\nu,L)$ with the application of Lemma~\ref{q_c2} and Lemma~\ref{q_sc_lem}, the conclusion follows by scaling back on the left hand side.
\end{proof}

Finally, let $v\in C(J^\la_{2V\rho};L^2(B_{2V\rho}^\la,\RR^N))\cap L^q(J^\la_{2V\rho};W^{1,q}(B_{2V\rho}^\la,\RR^N))$ be the weak solution  to 
\[\begin{cases}
        v_t-\dv(b_0(\mathcal{A}(0,\na v)))=0&\text{in}\quad G_{2V\rho}^\la,\\
        v=\eta &\text{on}\quad \pa_pG_{2V\rho}^\la,
    \end{cases}\]
where
\[
b_0=(b)_{G_{2V\rho}^\la}.
\]

\begin{lemma}\label{q_c3}
    There exists $\rho_0=\rho_0(n,N,p,q,\nu,L,\epsilon)$ such that if $\rho\in(0,\rho_0)$, then
    \[
    \frac{1}{|G_\rho^\la|}\iint_{G_{V\rho}^\la}H(z,|\na \eta-\na v|)\, dz\le \frac{1}{2^{2q}3}\epsilon H(0,\la).
    \]
    Moreover, we have
    \[
    \fiint_{G_{2V\rho}^\la}|\na   v  |^{q}\,dz\le c\la^q.
    \]
\end{lemma}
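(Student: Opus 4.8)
\textbf{Proof proposal for Lemma~\ref{q_c3}.}

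The plan is to mirror the structure of Lemma~\ref{p_c2}, working now in the $(p,q)$-intrinsic cylinder $G^\la_{2V\rho}$ with the frozen-in-space operator $\mA(0,\cdot)$ and replacing the variable coefficient $b(z)$ by its average $b_0$. First I would test the equation satisfied by $\eta - v$, namely
\[
(\eta-v)_t-\dv\bigl(b_0(\mA(0,\na\eta)-\mA(0,\na v))\bigr)=-\dv\bigl((b_0-b(z))\mA(0,\na\eta)\bigr)
\quad\text{in}\quad G^\la_{2V\rho},
\]
with the test function $\eta-v$. Using monotonicity of $\xi\mapsto\mA(0,\xi)$ from below and the ellipticity bound $0<\nu\le b_0\le L$, the standard energy estimate gives
\[
\fiint_{G^\la_{2V\rho}}H(0,|\na\eta-\na v|)\,dz
\le c\fiint_{G^\la_{2V\rho}}|b_0-b(z)|\,|\mA(0,\na\eta)|\,|\na\eta-\na v|\,dz,
\]
with $c=c(n,N,p,q,\nu,L)$. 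Splitting $|\mA(0,\na\eta)|\lesssim |\na\eta|^{p-1}+a(0)|\na\eta|^{q-1}$ and applying Young's inequality for each part (with the small parameter chosen so that the term $c|b_0-b(z)|H(0,|\na\eta-\na v|)\le 2Lc\,(\cdots)$ can be absorbed using $|b_0-b(z)|\le 2L$), it remains to bound
\[
\fiint_{G^\la_{2V\rho}}|b_0-b(z)|\,H(0,|\na\eta|)\,dz.
\]

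Next I would apply Hölder's inequality with exponents $\tfrac{1+\varepsilon_0}{\varepsilon_0}$ and $1+\varepsilon_0$, using the higher integrability bound from Lemma~\ref{q_high}, namely $\fiint_{G^\la_{2V\rho}}|\na\eta|^{q(1+\varepsilon_0)}\,dz\le c\la^{q(1+\varepsilon_0)}$ (together with $\eta$ satisfying a $q$-Laplace type equation by Lemma~\ref{q_c2}, Lemma~\ref{q_sc_lem}, so that also $\fiint H(0,|\na\eta|)^{1+\varepsilon_0}\lesssim \fiint(|\na\eta|^p+a(0)|\na\eta|^q)^{1+\varepsilon_0}\lesssim (a(0)\la^q)^{1+\varepsilon_0}=H(0,\la)^{1+\varepsilon_0}$ by \ref{iii}), to obtain
\[
\fiint_{G^\la_{2V\rho}}|b_0-b(z)|\,H(0,|\na\eta|)\,dz
\le c\,H(0,\la)\Bigl(\fiint_{G^\la_{2V\rho}}|b_0-b(z)|^{\frac{1+\varepsilon_0}{\varepsilon_0}}\,dz\Bigr)^{\frac{\varepsilon_0}{1+\varepsilon_0}}.
\]
Using $|b_0-b(z)|^{\frac{1+\varepsilon_0}{\varepsilon_0}}\le (2L)^{1/\varepsilon_0}|b_0-b(z)|$ and the VMO hypothesis \eqref{vmo}, the spatial-time average of $|b_0-b(z)|$ over $G^\la_{2V\rho}$ can be made as small as desired by choosing $\rho_0$ small — here one must check that $G^\la_{2V\rho}$ is of the form ``ball $\times$ interval with $|I|\le 2r^2$'' at an admissible scale, which follows since $\la\ge1$ forces the spatial radius $\la^{(p-2)/2}\rho\le\rho$ and the time length $\tfrac{\la^p}{H(0,\la)}\rho^2\le\rho^2$, so the VMO modulus at scale $\rho$ controls it. Combining, and then passing from $H(0,|\na\eta-\na v|)$ on $G^\la_{2V\rho}$ to $H(z,|\na\eta-\na v|)$ on the smaller $G^\la_{V\rho}$ via Lemma~\ref{coeff_com} (which gives $a(z)\le 2a(0)$, hence $H(z,s)\le 2H(0,s)$ up to constants), yields the first estimate once $\rho_0$ is chosen small enough in terms of $n,N,p,q,\nu,L,\epsilon$. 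The second estimate $\fiint_{G^\la_{2V\rho}}|\na v|^q\,dz\le c\la^q$ follows by the triangle inequality from the energy estimate just proved together with $\fiint_{G^\la_{2V\rho}}|\na\eta|^q\,dz\le c\la^q$ from Lemma~\ref{q_c2} (and $\fiint H(0,|\na\eta-\na v|)\ge \tfrac12 a(0)\fiint|\na\eta-\na v|^q$ by \ref{iii}, so $\fiint|\na\eta-\na v|^q\le c\la^q$).

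The main obstacle I expect is bookkeeping the various scaling factors correctly: the cylinder $G^\la_{2V\rho}$ carries the intrinsic time-scaling $\tfrac{\la^p}{H(0,\la)}$, so the ``physical'' measure $|G^\la_{\rho}|$ and the normalizations $H(0,\la)$ appearing on the right-hand side must be tracked through each Young/Hölder step, and one must verify that the cylinder remains admissible for \eqref{vmo} (which it does precisely because $\la\ge1$ and $p\le2$ shrink both the space and time extent below $\rho$). A secondary subtlety is that $\eta$ solves only the $q$-Laplace type equation with operator $\mA(0,\cdot)$ — not the original double-phase system — so one may freely use the $p$-Laplace higher integrability of \cite{MR1749438} applied to the scaled map $\eta_\la$ as in Lemma~\ref{q_high}, rather than Theorem~\ref{higher}; this keeps the constants independent of $\delta$, consistent with the statement of the lemma.
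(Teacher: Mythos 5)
Your proposal is correct and follows essentially the same route as the paper, which simply states that the proof is the argument of Lemma~\ref{p_c2} with $\zeta,\eta,\mA(z,\cdot)$ replaced by $\eta,v,\mA(0,\cdot)$ and with Lemma~\ref{q_high} supplying the higher integrability for the H\"older/VMO step; your write-up fills in exactly those details (energy estimate, Young's inequality with absorption via $|b_0-b(z)|\le 2L$, H\"older against the VMO modulus, and the triangle inequality for the second bound, where the paper's ``$\na\zeta$'' is evidently a typo for ``$\na v$'').
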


\begin{proof}
    The proof is analogous to the proof of Lemma~\ref{p_c2} by replacing $\zeta$, $\eta$ and $\mathcal{A}(z,\xi)$ by $\eta$, $v$ respectively and $\mathcal{A}(0,\xi)$ and applying Lemma~\ref{q_high} instead for the higher integrability. We omit the details.
\end{proof}

Again, the Lipschitz regularity of $v$ is as follows.
\begin{lemma}
    There exists $c=c(n,N,p,q,\nu,L)$ such that
    \[\sup_{z\in G_{V\rho}^\la}|\na v(z)|\le c\la.\]    
\end{lemma}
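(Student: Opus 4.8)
The plan is to mimic the Lipschitz estimate proved in Lemma~\ref{p_lip}, transferring the problem to a fixed, constant-coefficient $(p,q)$-Laplace system on a standard (non-intrinsic) cylinder via the scaling in Lemma~\ref{q_sc_lem}, and then invoke the spatial Lipschitz regularity for such systems from \cite{BARONI2017593}. First I would introduce the scaled map
\[
v_\la(x,t)=\frac{1}{\la^\frac{p}{2}\rho}v\bigl(\la^{\frac{p-2}{2}}\rho x,\tfrac{\la^p}{H(0,\la)}\rho^2t\bigr),\quad (x,t)\in Q_{2V},
\]
and observe, exactly as in Lemma~\ref{q_sc_lem}, that $v_\la$ solves a constant-coefficient equation $\pa_t v_\la-\dv(b_0\mathcal{A}_\la(0,\na v_\la))=0$ in $Q_{2V}$, where $\mathcal{A}_\la(0,\xi)=\tfrac{\la}{H(0,\la)}(\la^{p-1}|\xi|^{p-2}\xi+a(0)\la^{q-1}|\xi|^{q-2}\xi)$. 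The key point, already recorded in the proof of Lemma~\ref{q_high}, is that by \ref{iii} this scaled operator is uniformly $q$-Laplace type: $\tfrac12|\xi|^q\le\mathcal{A}_\la(0,\xi)\cdot\xi\le 2(|\xi|+1)^q$, with ellipticity constants depending only on $n,N,p,q,\nu,L$.

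Next I would establish the required energy bound $\fiint_{Q_{2V}}|\na v_\la|^q\,dz\le c$ with $c=c(n,N,p,q,\nu,L)$. This follows from the second estimate of Lemma~\ref{q_c3}, namely $\fiint_{G_{2V\rho}^\la}|\na \zeta|^q\,dz\le c\la^q$ together with the comparison bounds for $\na\zeta-\na\eta$ and $\na\eta-\na v$ in Lemma~\ref{q_c2} and Lemma~\ref{q_c3}, after dividing by $\la^q$ and using $\la^q\le H(0,\la)/a(0)$ and $H(0,\la)\le 2a(0)\la^q$; the upshot is $\fiint_{G_{2V\rho}^\la}H(0,|\na v|)\,dz\le c\la^p+c a(0)\la^q\le c H(0,\la)$, which after scaling turns into the dimensionless bound on $v_\la$. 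I would then apply the spatial Lipschitz regularity from \cite{BARONI2017593} for constant-coefficient $q$-Laplace-type systems to get
\[
\sup_{Q_{V}}|\na v_\la(z)|\le c\left(\fiint_{Q_{2V}}(|\na v_\la|^q+1)\,dz\right)^\gamma\le c
\]
for $c=c(n,N,p,q,\nu,L)$ and $\gamma=\gamma(n,q)$. Finally, unraveling the scaling $\na v(x,t)=\la^{\frac{p}{2}-\frac{p-2}{2}}\,\na v_\la(\cdot)=\la\,\na v_\la(\cdot)$ yields $\sup_{G_{V\rho}^\la}|\na v|\le c\la$, which is the claim.

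The main obstacle I anticipate is bookkeeping of the scaling exponents rather than any analytic difficulty: one must check that the $(p,q)$-intrinsic cylinder $G_{2V\rho}^\la$ transforms correctly into the standard cylinder $Q_{2V}$ under $x\mapsto\la^{\frac{p-2}{2}}\rho x$, $t\mapsto\tfrac{\la^p}{H(0,\la)}\rho^2 t$ — this is precisely why the time interval $J_\rho^\la$ was defined with the factor $\tfrac{\la^p}{H(0,\la)}$ in \eqref{def_G_cylinder} — and that the normalization $\tfrac{\la}{H(0,\la)}$ in $\mathcal{A}_\la(0,\cdot)$ is exactly what makes the growth constants absolute. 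A secondary point requiring care is that \cite{BARONI2017593} gives only interior (spatial) Lipschitz bounds, so the estimate must be stated on $Q_V$ (or $G_{V\rho}^\la$ after scaling back), strictly inside $Q_{2V}$, which is consistent with the statement of the lemma. All of these are the same steps already executed in Lemma~\ref{p_lip} for the $p$-intrinsic case, so no genuinely new idea is needed.
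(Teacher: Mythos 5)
Your proposal is correct and follows essentially the same route as the paper: scale $v$ via the $(p,q)$-intrinsic change of variables to obtain a constant-coefficient $q$-Laplace-type system on the standard cylinder $Q_{2V}$, derive the normalized energy bound $\fiint_{Q_{2V}}|\na v_\la|^q\,dz\le c$ from Lemma~\ref{q_c3}, apply the spatial Lipschitz estimate of \cite{BARONI2017593}, and scale back. The paper's proof is exactly this, with the same intermediate functional $H_\la(0,|\xi|)=b_0\mathcal{A}_\la(0,\xi)\cdot\xi$.
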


\begin{proof}
    Denoting the scaled map
    \[
    v_\la=\frac{1}{\la^\frac{p}{2}\rho}  v    \left(\la^\frac{p-2}{2}\rho x,\frac{\la^p}{H(0,\la)}\rho^2t  \right)\quad\text{for}\quad (x,t)\in Q_{2V},
    \]
    we deduce from Lemma~\ref{q_sc_lem} and Lemma~\ref{q_c3} that $v_\la$ is a weak solution to
    \[
    \partial_t v_\la -\dv(b_0\mathcal{A}_\la(0,\na v_\la)) =0\quad\text{in}\quad Q_{2V}
    \]
    with the estimate
    \[
    \fiint_{Q_{2V}}|\na v_\la|^q\,dz\le c
    \]
    for $c=c(n,N,p,q,\nu,L)$. Therefore, for the functional defined as
    \[
    H_\la(0,|\xi|)=b_0\left(   \frac{\la^p}{H(0,\la)}|\xi|^{p}+\frac{a(0)\la^q}{H(0,\la)}|\xi|^q   \right) =b_0\mathcal{A}_\la(0,\xi)\cdot \xi,
    \]
    it follows that
    \[
    \fiint_{Q_{2V}}H_\la(0,|\na v_\la|)\, dz\le c\fiint_{Q_{2V}} |\na v_\la|^p+|\na v_\la|^q\,dz\le c
    \]
    for $c=c(n,N,p,q,\nu,L)$. Hence the conclusion follows as in Lemma~\ref{p_lip}.
\end{proof}

As in the $p$-intrinsic case, we end this subsection with the following estimate.
\begin{corollary}
    There exist $\delta=\delta(\data,\epsilon)\in (0,1)$ and $\rho_0=\rho_0(\data,\|H(z,|F|)\|_{L^{1+\varepsilon_0}(\Om_T)},\delta,\epsilon)\in (0,1)$ such that if $\rho\in(0,\rho_0)$, then 
    \[
    \iint_{G_{V\rho}^\la}H(z,|\na u-\na v|)\,dz\le \ep\la^p|G_{\rho}^\la|.
    \]
\end{corollary}

\section{Stopping time arguments}
In this section, we will verify Assumption~\ref{p_ass} and Assumption~\ref{q_ass} by using the stopping time argument and prove the Vitali covering argument for intrinsic cylinders with covering constant $V=9K$, see \eqref{K_def} and \eqref{V_def}.

To begin with, we recall the referenced cylinder $Q_{2\rho}(z_0)\subset \Om_T$ where $\rho\in(0,\rho_0)$ and $\rho_0$ will be determined as $\epsilon$ is chosen.
We denote
\[
\la_0^\frac{p(n+2)-2n}{2}=\fiint_{Q_{2  \rho  }(z_0)}(H(z,|\na u|)+\delta^{-1}H(z,|F|))\,dz+1
\]
and
\[
\La_0=\la_0^p+\|a\|_{L^\infty(\Om_T)}\la_0^q.
\]
For any $r\in(0,2\rho)$, we denote upper level sets
\begin{align*}
    \begin{split}
        &\Psi(\La,r)=\{ z\in Q_{r}(z_0): H(z,|\na u(z)|)>\La  \},\\
        &\Phi(\La,r)=\{ z\in Q_{r}(z_0): H(z,|F(z)|)>\La  \}.
    \end{split}
\end{align*}
In order to utilize the technical lemma in the next section, we take $r_1,r_2$ such that
\[
\rho\le r_1<r_2\le2\rho
\]
and consider the level
\begin{align}\label{La}
    \La> \left( \frac{32V\rho}{r_2-r_1} \right)^\frac{2q(n+2)}{p(n+2)-2n}\La_0,
\end{align}
where the term with the exponent on the right hand side is bigger than 1. In this  section, we fix $\La$ satisfying \eqref{La}.

Now, for each Lebesgue point $\om\in \Psi(\La,r_1)$, let $\la_\om$ be defined as
\begin{align}\label{la}
    \La=\la_\om^p+a(\om)\la_\om^q.
\end{align}
Since the function $0<s\mapsto s^p+a(\om)s^q$ is strictly increasing continuous function with
\[
\lim_{s\to0^+}s^p+a(\om)s^q=0,\quad \lim_{s\to\infty}s^p+a(\om)s^q=\infty,
\]
$\la_\om$ uniquely exists. Furthermore, there holds
\begin{align}\label{lao}
    \lao>\left( \frac{32V\rho}{r_2-r_1} \right)^\frac{2(n+2)}{p(n+2)-2n}\la_0.
\end{align}
Indeed, if the above inequality fails, then we get the following contradiction
\[
\La=\lao^p+a(\om)\lao^q\le \left( \frac{32V\rho}{r_2-r_1} \right)^\frac{2q(n+2)}{p(n+2)-2n}(\la_0^p+a(\om)\la_0^q)\le \La_0.
\]

Along with above settings, we are ready to apply the stopping time argument.
\begin{lemma}\label{p_stopping}
    Let $\om\in \Psi(\La,r_1)$ be a Lebesgue point and $\lao$ be defined in \eqref{lao}. Then there exists  stopping time $\rho_\om$ such that
    \[
    0<\rho_\om <\frac{r_2-r_1}{16V}
    \]
    satisfying
    \begin{align*}
        \begin{split}
            &\fiint_{Q_{r}^{\lao}(\om)} (H(z,|\na u|)+\delta^{-1}H(z,|F|))\,dz<\lao^p,\\
            &\fiint_{Q_{\rho_\om}^{\lao}(\om)} (H(z,|\na u|)+\delta^{-1}H(z,|F|))\,dz=\lao^p
        \end{split}
    \end{align*}
    for $r\in(\rho_\om, r_2-r_1 )$. Moreover, there holds
    \[
    \lao\le \left( \frac{2\rho}{\rho_\om}\right)^\frac{p(n+2)-2n}{2(n+2)}\la_0.
    \]
\end{lemma}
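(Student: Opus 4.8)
The plan is to run the standard parabolic stopping-time (exit-time) argument on the intrinsic cylinders $Q_r^{\lao}(\om)$, shrinking $r$ from the top value $r_2-r_1$ downward until the scaled integral average of $H(z,|\na u|)+\delta^{-1}H(z,|F|)$ first reaches the level $\lao^p$. First I would record the behaviour of the functional at the top scale: for $r$ just below $r_2-r_1$ the cylinder $Q_r^{\lao}(\om)$ is comparable to $Q_{r_2-r_1}(z_0)$-sized (up to the space dilation by $\lao^{(p-2)/2}\le 1$ since $p\le 2$), so by comparing $|Q_r^{\lao}(\om)|$ with $|Q_{2\rho}(z_0)|$ and using the very definition of $\la_0$ together with the lower bound \eqref{lao} on $\lao$, one checks that
\[
\fiint_{Q_{r}^{\lao}(\om)}\bigl(H(z,|\na u|)+\delta^{-1}H(z,|F|)\bigr)\,dz<\lao^p
\]
for $r=r_2-r_1$ (hence by continuity for $r$ in a left-neighbourhood). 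The lower bound \eqref{lao} is exactly what is engineered to make this top-scale inequality hold; the factor $\bigl(32V\rho/(r_2-r_1)\bigr)$ carries the loss coming from the ratio of cylinder sizes.

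Next I would use that $\om$ is a Lebesgue point of $H(z,|\na u|)$ and lies in $\Psi(\La,r_1)$, so $H(\om,|\na u(\om)|)>\La=\lao^p+a(\om)\lao^q\ge\lao^p$; therefore the scaled averages
\[
r\longmapsto \fiint_{Q_{r}^{\lao}(\om)}\bigl(H(z,|\na u|)+\delta^{-1}H(z,|F|)\bigr)\,dz
\]
tend to a value $\ge H(\om,|\na u(\om)|)>\lao^p$ as $r\to0^+$. Since this map is continuous in $r$ (the integrand is in $L^1$ and the cylinders $Q_r^{\lao}(\om)$ increase continuously with $r$), the intermediate value theorem furnishes a largest $\rho_\om\in(0,r_2-r_1)$ at which the average equals $\lao^p$, and by maximality the strict inequality $<\lao^p$ persists for all $r\in(\rho_\om,r_2-r_1)$. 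Combined with the top-scale inequality this gives both displayed identities; one then notes $\rho_\om<\tfrac{r_2-r_1}{16V}$ is not automatic and must be \emph{imposed}: if the exit radius produced above were $\ge\tfrac{r_2-r_1}{16V}$, then evaluating the average at $r=\tfrac{r_2-r_1}{16V}$ (which is $<\rho_\om$, hence gives value $<\lao^p$) and comparing with the top-scale estimate shows this is compatible only after the same size-ratio bookkeeping — so in practice one runs the exit-time argument on the range $r\in(0,\tfrac{r_2-r_1}{16V})$ from the start, using that at $r=\tfrac{r_2-r_1}{16V}$ the average is already $<\lao^p$ by the computation in the first paragraph (this is where the constant $32V$ rather than $16V$ in \eqref{La} is used, to leave room).

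Finally, the last inequality $\lao\le(2\rho/\rho_\om)^{(p(n+2)-2n)/(2(n+2))}\la_0$ follows from the stopping identity at scale $\rho_\om$ by simply enlarging the cylinder: since $Q_{\rho_\om}^{\lao}(\om)\subset Q_{2\rho}(z_0)$ (guaranteed because $\rho_\om<\tfrac{r_2-r_1}{16V}\le\tfrac{\rho}{8V}$ and $\lao^{(p-2)/2}\le1$), we have
\[
\lao^p=\fiint_{Q_{\rho_\om}^{\lao}(\om)}\!\!\bigl(H(z,|\na u|)+\delta^{-1}H(z,|F|)\bigr)\,dz
\le\frac{|Q_{2\rho}(z_0)|}{|Q_{\rho_\om}^{\lao}(\om)|}\fiint_{Q_{2\rho}(z_0)}\!\!\bigl(H(z,|\na u|)+\delta^{-1}H(z,|F|)\bigr)\,dz,
\]
and $|Q_{2\rho}(z_0)|/|Q_{\rho_\om}^{\lao}(\om)|=c\,\lao^{-n(p-2)/2}(2\rho/\rho_\om)^{n+2}$ while the remaining average is $\le\la_0^{(p(n+2)-2n)/2}$ by definition of $\la_0$; rearranging the resulting inequality $\lao^{p+n(p-2)/2}\le c(2\rho/\rho_\om)^{n+2}\la_0^{(p(n+2)-2n)/2}$ and noting $p+\tfrac{n(p-2)}{2}=\tfrac{p(n+2)-2n}{2}$ gives the claim (the constant can be absorbed since $\rho_\om<2\rho$, or tracked if one prefers $c=1$). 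The main obstacle is not any single estimate but the careful bookkeeping of the two competing dilations — the parabolic scaling $r^2$ versus the $\lao$-dependent space dilation $\lao^{(p-2)/2}$ — so that all cylinder-volume ratios come out as powers of $2\rho/\rho_\om$ with the exact exponent $\tfrac{p(n+2)-2n}{2(n+2)}$; getting the inequality \eqref{La} strong enough (the role of the factor $32V$) to simultaneously guarantee $\rho_\om<\tfrac{r_2-r_1}{16V}$ and the top-scale smallness is the delicate point.
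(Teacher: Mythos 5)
Your proposal is correct and follows essentially the same route as the paper: the top-scale volume-ratio computation on the whole range $r\in\bigl(\tfrac{r_2-r_1}{16V},\,r_2-r_1\bigr)$ combined with \eqref{lao} to force the exit radius below $\tfrac{r_2-r_1}{16V}$, the Lebesgue-point lower bound as $r\to0^+$, continuity in $r$ for the exit time, and the identical cylinder-volume rearrangement (with exponent $p+\tfrac{n(p-2)}{2}=\tfrac{p(n+2)-2n}{2}$) for the final bound on $\lao$.
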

\begin{proof}
    Since $\om\in Q_{r_1}(z_0)\subset Q_{2\rho}(z_0)\subset \Om_T$, note that
    $Q_{r_2-r_1}(\om)\subset Q_{2\rho}(z_0)$.
    For any $r$ such that
    \[
    \frac{r_2-r_1}{16V}<r<r_2-r_1,
    \]
    we observe 
    \begin{align*}
        \begin{split}
            &\fiint_{Q_{r}^{\lao}(\om)} (H(z,|\na u|)+\delta^{-1}H(z,|F|))\,dz\\
            &\le \frac{|Q_{2\rho}|}{|Q_{r}^\la|}\fiint_{Q_{2\rho}(z_0)} (H(z,|\na u|)+\delta^{-1}H(z,|F|))\,dz\\
            &\le\frac{(2\rho)^{n+2}}{\lao^\frac{n(p-2)}{2} r^{n+2}}\la_0^\frac{p(n+2)-2n}{2}\\
            &\le \left( \frac{32V\rho}{r_2-r_1} \right)^{n+2}\lao^\frac{n(2-p)}{2}\la_0^\frac{p(n+2)-2n}{2}.
        \end{split}
    \end{align*}
    Recalling $p>\tfrac{2n}{n+2}$ and \eqref{lao} holds, we get
    \[
    \fiint_{Q_{r}^{\lao}(\om)} (H(z,|\na u|)+\delta^{-1}H(z,|F|))\,dz< \lao^p.
    \]
    On the other hand, since $\om\in \Psi(\La,r_1)$, it follows from \eqref{la} that $|\na u(\om)|>\lao$. As we have $\lao^p<|\na u(\om)|^p\le H(\om,|\na u(\om)|)$, there holds
    \[
    \lim_{r\to0^+} \fiint_{Q_{r}^{\lao}(\om)} (H(z,|\na u|)+\delta^{-1}H(z,|F|))\,dz>\lao^p.
    \]
    As the integral is continuous with respect to $r$, there exists a stopping time $\rho_\om\in (0, (16V)^{-1}(r_2-r_1))$ fulfilling conditions in the statement of this lemma. To prove the last inequality of the lemma, we observe
    \begin{align*}
        \begin{split}
            \lao^p
            &=\fiint_{Q_{\rho_\om}^{\lao}(\om)} (H(z,|\na u|) +H(z,|F|) )\,dz\\
            &\le \frac{|Q_{2\rho}|}{|Q_{\rho_\om}^{\lao}|}\fiint_{Q_{2\rho}(z_0)} (H(z,|\na u|) +H(z,|F|) )\,dz\\
            &\le \left(\frac{2\rho}{\rho_\om}\right)^{n+2}\lao^\frac{n(2-p)}{2}\la_0^\frac{p(n+2)-2n}{2}.
        \end{split}
    \end{align*}
   Therefore, we obtain
   \[
   \rho_\om^{n+2}\le \left(\frac{\la_0}{\lao}\right)^\frac{p(n+2)-2n}{2} (2\rho)^{n+2}
   \]
\end{proof}

If $p$-intrinsic case $K^2\lao^p\ge a(\om)\lao^q$ holds, then Lemma~\ref{p_stopping} guarantees Assumption~\ref{p_ass}. Meantime, if $(p,q)$-intrinsic case $K^2\lao^p< a(\om)\lao^q$ holds, then we again apply the stopping time argument with the $(p,q)$-intrinsic cylinder.
\begin{lemma}\label{q_stopping}
    Let $\om\in \Psi(\La,r_1)$ be a Lebesgue point and $\lao$ be defined in \eqref{lao}. Suppose $(p,q)$-intrinsic case $K^2\lao^p< a(\om)\lao^q$ holds. Then there exists  stopping time $\varrho_\om$ such that
    \[
    0<\varrho_\om <\rho_\om
    \]
    satisfying
    \begin{align*}
        \begin{split}
            &\fiint_{G_{r}^{\lao}(\om)} (H(z,|\na u|)+\delta^{-1}H(z,|F|))\,dz<\La,\\
            &\fiint_{G_{\varrho_\om}^{\lao}(\om)} (H(z,|\na u|)+\delta^{-1}H(z,|F|))\,dz=\La
        \end{split}
    \end{align*}
    for $r\in(\varrho_\om, r_2-r_1 )$. Moreover, there holds
    \[
    \lao\le \left( \frac{2\rho}{\varrho_\om}\right)^\frac{p(n+2)-2n}{2(n+2)}\la_0.
    \]
\end{lemma}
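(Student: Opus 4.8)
The plan is to imitate the proof of Lemma~\ref{p_stopping}, now running the stopping time argument inside the $(p,q)$-intrinsic cylinders $G_r^{\lao}(\om)$; the only genuinely new ingredient is an elementary comparison between $G_r^{\lao}(\om)$ and $Q_r^{\lao}(\om)$. Throughout write $g(r)=\fiint_{G_r^{\lao}(\om)}\bigl(H(z,|\na u|)+\delta^{-1}H(z,|F|)\bigr)\,dz$.

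First I would record that, since $H(\om,\lao)\ge\lao^p$ forces $J_r^{\lao}(s)\subset I_r(s)$ while the two cylinders share the same spatial ball, one has $G_r^{\lao}(\om)\subset Q_r^{\lao}(\om)$ for every $r>0$, together with the volume identity $|Q_r^{\lao}(\om)|/|G_r^{\lao}(\om)|=H(\om,\lao)/\lao^p=\La/\lao^p$, the last step being \eqref{la}. Feeding this and $H(z,|\na u|)+\delta^{-1}H(z,|F|)\ge0$ into the first assertion of Lemma~\ref{p_stopping}, which holds for $r\in(\rho_\om,r_2-r_1)$, gives
\[
    g(r)\le\frac{\La}{\lao^p}\fiint_{Q_r^{\lao}(\om)}\bigl(H(z,|\na u|)+\delta^{-1}H(z,|F|)\bigr)\,dz<\frac{\La}{\lao^p}\,\lao^p=\La
\]
for $r\in(\rho_\om,r_2-r_1)$, and the same computation at $r=\rho_\om$ yields $g(\rho_\om)\le\La$. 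On the other hand $\om\in\Psi(\La,r_1)$ gives $H(\om,|\na u(\om)|)>\La$, so the Lebesgue point property of $\om$ (with respect to the family of intrinsic cylinders centred at $\om$) yields $\lim_{r\to0^+}g(r)\ge H(\om,|\na u(\om)|)>\La$. Since $g$ is continuous, let $\varrho_\om$ be the supremum of those $r\in(0,r_2-r_1)$ with $g(r)\ge\La$; it is attained, lies in $(0,\rho_\om]$, and satisfies $g(\varrho_\om)=\La$ and $g(r)<\La$ for $r\in(\varrho_\om,r_2-r_1)$. That $\varrho_\om<\rho_\om$ strictly follows as the corresponding strict bound in Lemma~\ref{p_stopping}.

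For the last inequality I would argue exactly as in Lemma~\ref{p_stopping}. Since $\lao\ge1$ and $p\le2$ we have $G_{\varrho_\om}^{\lao}(\om)\subset Q_{\varrho_\om}(\om)\subset Q_{2\rho}(z_0)$, so the defining equality $g(\varrho_\om)=\La$ and the bound $\fiint_{Q_{2\rho}(z_0)}\bigl(H(z,|\na u|)+\delta^{-1}H(z,|F|)\bigr)\,dz\le\la_0^{\frac{p(n+2)-2n}{2}}$ give
\[
    \La=g(\varrho_\om)\le\frac{|Q_{2\rho}(z_0)|}{|G_{\varrho_\om}^{\lao}(\om)|}\,\la_0^{\frac{p(n+2)-2n}{2}} .
\]
Using $H(\om,\lao)=\La$ one computes $|G_{\varrho_\om}^{\lao}(\om)|=c_n\La^{-1}\lao^{\frac{p(n+2)-2n}{2}}\varrho_\om^{n+2}$ and $|Q_{2\rho}(z_0)|=c_n(2\rho)^{n+2}$ with $c_n=2|B_1|$; substituting, the factor $\La$ cancels, and after rearrangement one obtains $\lao\le(2\rho/\varrho_\om)^{\kappa}\la_0$ with the same exponent $\kappa$ as in Lemma~\ref{p_stopping}.

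The delicate point is the strict inequality $\varrho_\om<\rho_\om$: the comparison only gives $g(\rho_\om)\le\La$ at $r=\rho_\om$, so strictness rests on the fact that the set $Q_{\rho_\om}^{\lao}(\om)\setminus G_{\rho_\om}^{\lao}(\om)$, which has positive measure because $a(\om)>0$ in the $(p,q)$-intrinsic case (whence $\La>\lao^p$), carries a nonzero portion of the mass of $H(z,|\na u|)+\delta^{-1}H(z,|F|)$; this is handled as in Lemma~\ref{p_stopping}. Apart from this, the proof is a transcription of that of Lemma~\ref{p_stopping} with $Q^{\lao}_r$ replaced by $G^{\lao}_r$ and the threshold $\lao^p$ replaced by $H(\om,\lao)=\La$, which is precisely why $\La$ appears on the right-hand side of the stopping time conditions in place of $\lao^p$.
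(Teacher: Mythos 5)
Your argument is correct and is essentially the paper's own proof: in both, the key step is the comparison $\fiint_{G_r^{\lao}}\le \tfrac{|Q_r^{\lao}|}{|G_r^{\lao}|}\fiint_{Q_r^{\lao}}=\tfrac{H(\om,\lao)}{\lao^p}\fiint_{Q_r^{\lao}}$ combined with Lemma~\ref{p_stopping}, the Lebesgue-point lower bound as $r\to0^+$, and continuity of the integral average in $r$. Your direct rederivation of the final inequality from $g(\varrho_\om)=\La$ and $|G_{\varrho_\om}^{\lao}|=\La^{-1}\lao^{p}|Q_{\varrho_\om}^{\lao}|$ is fine and equivalent to the paper's shortcut, which simply cites the bound of Lemma~\ref{p_stopping} together with $\varrho_\om<\rho_\om$ (note that the exponent you obtain, $\tfrac{2(n+2)}{p(n+2)-2n}$, is what the proof of Lemma~\ref{p_stopping} actually yields; the exponent printed in both lemma statements appears to be its reciprocal). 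One caveat on your ``delicate point'': the strict inequality $\varrho_\om<\rho_\om$ is not in fact handled in Lemma~\ref{p_stopping}, and positive measure of $Q_{\rho_\om}^{\lao}\setminus G_{\rho_\om}^{\lao}$ does not by itself force positive mass of the nonnegative integrand there; the paper's own proof merely asserts a strict inequality at this step, and since only $\varrho_\om\le\rho_\om$ is used downstream, this imprecision is shared and harmless.
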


\begin{proof}
Since $a(\om)>0$, we have $\lao^p<H(\om,\lao)=\La$. Therefore, it follows that for any $r>0$, we have
\[
G_r^{\lao} \subset Q_r^{\lao},\quad G_r^{\lao} \ne Q_r^{\lao}.
\]
For any $r\in [\rho_\om,r_2-r_1)$, we have from Lemma~\ref{p_stopping} that
    \begin{align*}
        \begin{split}
            &\fiint_{G_{r}^{\lao}(\om)} (H(z,|\na u|)+\delta^{-1}H(z,|F|))\,dz\\
            &< \frac{|Q_r^{\lao}|}{|G_r^{\lao}|}\fiint_{Q_{r}^{\lao}(\om)} (H(z,|\na u|)+\delta^{-1}H(z,|F|))\,dz\\
            &\le \frac{H(\om,\lao)}{\lao^p}\lao^p\\
            &=H(\om,\lao).
        \end{split}
    \end{align*}
As $\La<H(\om,|\na u(\om)|)$ holds, we get
\[
\lim_{r\to0^+}\fiint_{G_{r}^{\lao}(\om)} (H(z,|\na u|)+\delta^{-1}H(z,|F|))\,dz>\La.
\]
Again by the continuity of integral in the radius $r$, there exists a stopping time $\varrho_\om$ such that the conclusion of the lemma holds. Furthermore, the last inequality of this lemma follows from Lemma~\ref{p_stopping} as $\varrho_\om<\rho_\om$.
\end{proof}

The previous lemma proves the conditions in Assumption~\ref{q_ass} by replacing $\rho_\om$ there in by $\varrho_\om$.

In the rest of this paper, we will use the following notation. For $z\in \Psi(\La,r_1)$, we write
\[
Q_z=
\begin{cases}
    Q_{l_z}^{\la_z}(z)&\text{if $p$-intrinsic case,}\\
    G_{l_z}^{\la_z}(z)&\text{if $(p,q)$-intrinsic case,}
\end{cases}
\]
where
\[
l_z=
\begin{cases}
    \rho_z&\text{if $p$-intrinsic case,}\\
    \varrho_z&\text{if $(p,q)$-intrinsic case.}
\end{cases}
\]
Since the scaling factors are pointwise, the comparability of $\la_{(\cdot)}$ is necessary to prove the Vitali covering lemma.
\begin{lemma}\label{la_com}
    Let $\om,z\in \Psi(\La,r_1)$ be Lebesgue points with $ Q_\om\cap  Q_z\ne\emptyset$. Then for $\lao$ and $\la_z$ defined in \eqref{la}, we have
    \[
    2^{-\frac{1}{p}}\la_z\le \lao\le 2^\frac{1}{p}\la_z.
    \]
\end{lemma}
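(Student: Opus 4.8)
The plan is to recover the desired two-sided comparison of $\la_\om$ and $\la_z$ purely from the defining relations $\La=\la_\om^p+a(\om)\la_\om^q$ and $\La=\la_z^p+a(z)\la_z^q$ together with the closeness of $a(\om)$ and $a(z)$. The crucial input is the upper bound \eqref{lao}, which forces $\la_\om$ (and $\la_z$) to be large, more precisely at least comparable to $\la_0$, so that the H\"older oscillation of $a$ over the relevant scale can be absorbed. First I would fix the normalization: from \eqref{lao} and $\la_0\ge 1$ we have $\la_\om>1$ (and similarly $\la_z>1$), and moreover $\la_\om^p\le\La\le\la_\om^p+\|a\|_{L^\infty}\la_\om^q$, so $\la_\om^p$ is the relevant lower-order term.

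The heart of the matter is to control $|a(\om)-a(z)|$ against $a$ itself on the scale $|\om-z|$. Both $\om$ and $z$ lie in $Q_{r_1}(z_0)\subset Q_{2\rho}(z_0)$, so the parabolic distance between them is at most $c\rho$; by the H\"older continuity \eqref{coeff} this gives $|a(\om)-a(z)|\le c[a]_\alpha\rho^\alpha$. The key step is then to show this quantity is small relative to $a(\om)\la_\om^{q-p}$ — equivalently, that $\rho^\alpha\la_\om^{q-p}$ is controlled. Here I would combine the lower bound on $\la_\om$ coming from \eqref{lao} with the constraint $q\le p+\tfrac{\alpha(p(n+2)-2n)}{2(n+2)}$ from \eqref{range_pq}, exactly as in the proofs of Lemma~\ref{p_lem} and Lemma~\ref{q_lem}: since $\la_\om\ge (32V\rho/(r_2-r_1))^{\frac{2(n+2)}{p(n+2)-2n}}\la_0$ we get an inequality of the form $\la_\om^{-\frac{p(n+2)-2n}{2(n+2)}}\le c\rho/(r_2-r_1)$, and raising to the power $\alpha$ and using $q-p\le \tfrac{\alpha(p(n+2)-2n)}{2(n+2)}$ yields $\rho^\alpha\la_\om^{q-p}\le c(r_2-r_1)^{\text{(positive power)}}$, which can be made $\le 1$ (indeed as small as we like) once $\rho_0$ is taken small. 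This shows $a(z)\la_\om^q\le a(\om)\la_\om^q+c\rho^\alpha\la_\om^{q-p}\la_\om^p\le a(\om)\la_\om^q+\la_\om^p$, and symmetrically.

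With this in hand I would argue by contradiction and monotonicity. Suppose $\la_z>2^{1/p}\la_\om$. Then, using that $s\mapsto s^p+a(z)s^q$ is increasing,
\[
\La=\la_z^p+a(z)\la_z^q>2\la_\om^p+2a(z)\la_\om^q\ge 2\la_\om^p+2\bigl(a(\om)\la_\om^q-\la_\om^p\bigr)\cdot(\text{something})\ge \la_\om^p+a(\om)\la_\om^q=\La
\]
— more carefully, from $a(z)\la_\om^q\ge a(\om)\la_\om^q-\la_\om^p$ we get $\la_z^p+a(z)\la_z^q>2(\la_\om^p+a(z)\la_\om^q)\ge 2\la_\om^p+2a(\om)\la_\om^q-2\la_\om^p=2a(\om)\la_\om^q$, and pairing this with $\la_z^p+a(z)\la_z^q>2\la_\om^p$ and averaging gives a contradiction with $\La=\la_\om^p+a(\om)\la_\om^q$. (The precise bookkeeping of constants — getting exactly the factor $2^{\pm 1/p}$ rather than a worse constant — is a routine but slightly delicate estimate; I would if necessary choose $\rho_0$ small enough that the perturbation term $\la_\om^p$ is dominated by a small fraction of $\La$.) The symmetric assumption $\la_z<2^{-1/p}\la_\om$ is handled the same way with the roles of $\om$ and $z$ exchanged. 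The main obstacle is the second step, namely extracting the smallness of $\rho^\alpha\la_\om^{q-p}$ from the interplay between the stopping-time lower bound \eqref{lao} and the sharp upper restriction on $q$ in \eqref{range_pq}; once that is in place the rest is elementary convexity/monotonicity of $s\mapsto s^p+a\,s^q$.
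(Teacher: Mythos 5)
There is a genuine gap at the step you yourself flag as the main obstacle: the smallness of $\rho^\alpha\lao^{q-p}$. You propose to extract it from the lower bound \eqref{lao}, but that inequality points the wrong way. Since $q>p$, a lower bound on $\lao$ only makes $\lao^{q-p}$ \emph{larger}; from $\lao\ge(32V\rho/(r_2-r_1))^{\frac{2(n+2)}{p(n+2)-2n}}\la_0$ and $q-p\le\tfrac{\alpha(p(n+2)-2n)}{2(n+2)}$ you obtain a \emph{lower} bound for $\rho^\alpha\lao^{q-p}$, not an upper one. What is actually needed — and what Lemma~\ref{p_lem} and Lemma~\ref{q_lem} provide — is an \emph{upper} bound on $\lao$ in terms of the stopping radius $\rho_\om$: the stopping-time equality \ref{b} (resp.\ \ref{d}) combined with the $L^{1+\varepsilon_0}$ integrability of $H(z,|\na u|)$ from Theorem~\ref{higher} gives $\la^p\le c_R\bigl(\la^{\frac{n(p-2)}{2}}\rho_\om^{n+2}\bigr)^{-\frac{1}{1+\varepsilon_0}}$, and only then does \eqref{range_pq} yield $c\,\rho_\om^\alpha\la^q\le\tfrac12\la^p$ for $\rho_0$ small. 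Correspondingly, the oscillation of $a$ must be measured at the scale $\rho_\om$ — the paper bounds $|a(z)-a(\om)|\le[a]_\alpha(V\rho_\om)^\alpha$, i.e.\ it uses the comparison for points whose intrinsic cylinders are close, as in the Vitali covering — and not at the scale $\rho$ as you do: for fixed $\rho$ the factor $\lao$ is unbounded as $\rho_\om\to0$ (Lemma~\ref{p_stopping}), so the bound $\rho^\alpha\lao^{q-p}\le1$ you are aiming for is not available from the data you cite.

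Once that step is repaired, your concluding contradiction is essentially the paper's argument, only stated less cleanly: assuming $\lao>2^{1/p}\la_z$, monotonicity of $s\mapsto s^p+a(z)s^q$ gives $\La=\la_z^p+a(z)\la_z^q<\tfrac12\lao^p+\tfrac12a(z)\lao^q\le\tfrac12\bigl(\lao^p+a(\om)\lao^q\bigr)+[a]_\alpha(V\rho_\om)^\alpha\lao^q\le\tfrac12\La+\tfrac12\lao^p\le\La$, a contradiction. The exact constant $2^{1/p}$ is not delicate: raising $2^{1/p}$ to the powers $p$ and $q\ge p$ produces at least a factor $2$ in each term, and the perturbation is absorbed by $\tfrac12\lao^p\le\tfrac12\La$, so no additional smallness of $\rho_0$ beyond Lemma~\ref{p_lem}/Lemma~\ref{q_lem} is required.
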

\begin{proof}
    It is suffice to show $\lao\le 2^\frac{1}{p}\la_z$. For the proof, we divide cases.

    \textit{Case $K^2\lao^p\ge a(\om)\lao^q$.} We prove by contradiction. Suppose 
    \begin{align}\label{la_com_hy}
        \lao> 2^\frac{1}{p}\la_z.
    \end{align}
    Using the above inequality and \eqref{coeff}
    \[
    \La=\la_z^p+a(z)\la_z^q< \frac{1}{2}\lao^p+\frac{1}{2}a(z)\lao^q \le \frac{1}{2}(\lao^p+a(\om)\lao^q)+[a]_\alpha\rho_\om^\alpha\lao^q.
    \]
    On the other hand, we have from Lemma~\ref{p_lem} that $[a]_\alpha\rho_\om^\alpha\lao^q \le \tfrac{1}{2}\lao^p$ and therefore we conclude
    \[
    \La< \frac{1}{2}\La+\frac{1}{2}\lao^p\le \La.
    \]
    This is a contradiction and \eqref{la_com_hy} is false.

    \textit{Case $K^2\lao^p< a(\om)\lao^q$.} The proof for this case is analogous. The same argument holds with replacing $\rho_\om$ by $\varrho_\om$ and Lemma~\ref{p_lem} by Lemma~\ref{q_lem}.

    This completes the proof.
\end{proof}

We now state the Vitali covering lemma.
\begin{lemma}\label{Vitali}
    There exists a pairwise disjoint set $\{Q_i\}_{i\in\mathbb{N}}$ where $Q_{i}=Q_{z_i}$ for Lebesgue points $z_i\in \Psi(\La,r_1)$ such that for any Lebesgue point $z\in \Psi(\La,r_1)$ with $Q_z$, we have
    \[
    Q_{z}\subset VQ_i
    \]
    for some $i\in\mathbb{N}$ where we denoted the scaled cylinder by
    \[
    d Q_z=
\begin{cases}
    Q_{d l_z}^{\la_z}(z)&\text{if $p$-intrinsic case,}\\
    G_{d l_z}^{\la_z}(z)&\text{if $(p,q)$-intrinsic case,}
\end{cases}
    \]
    for any $d>0$.
\end{lemma}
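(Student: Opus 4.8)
The plan is to follow the classical Vitali-type covering argument adapted to the family of intrinsic cylinders $\{Q_z\}$, using the comparability of the scaling factors $\la_{(\cdot)}$ from Lemma~\ref{la_com} to control how the geometry distorts under the stopping radii. First I would observe that all cylinders $Q_z$ are contained in a fixed bounded region (since $l_z < \tfrac{r_2-r_1}{16V}$ and $\om\in Q_{r_1}(z_0)$, so $VQ_z\subset Q_{2\rho}(z_0)\subset\Om_T$), and that the radii $l_z$ are bounded: from the last inequalities in Lemma~\ref{p_stopping} and Lemma~\ref{q_stopping} we have $l_z\ge c\,(\la_0/\la_z)^{\frac{2(n+2)}{p(n+2)-2n}}\cdots$ — more usefully, $l_z$ is bounded above by $\tfrac{r_2-r_1}{16V}$. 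This lets me run the standard greedy selection: partition $\Psi(\La,r_1)$ into generations $\mathcal{F}_k=\{Q_z : \tfrac{r_2-r_1}{16V}\,2^{-k}< l_z\le \tfrac{r_2-r_1}{16V}\,2^{-k+1}\}$ for $k\ge1$, pick a maximal disjoint subfamily from $\mathcal{F}_1$, then inductively from each $\mathcal{F}_k$ pick a maximal disjoint subfamily among those cylinders disjoint from all previously chosen ones, and let $\{Q_i\}$ be the union.

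Next I would verify the covering property: given any Lebesgue point $z\in\Psi(\La,r_1)$, its cylinder $Q_z$ lies in some generation $\mathcal{F}_k$, and by maximality it must intersect some already-selected $Q_i=Q_{z_i}$ with $z_i$ in generation $\mathcal{F}_j$, $j\le k$, hence $l_{z_i}\ge l_z$ (up to the factor $2$ within a generation, so $l_{z_i}> \tfrac12 l_z$). The crucial step is then the geometric inclusion $Q_z\subset VQ_i$. Here I would use Lemma~\ref{la_com}: since $z,z_i\in\Psi(\La,r_1)$, we have $\la_z$ and $\la_{z_i}$ comparable within a factor $2^{1/p}$, so the spatial radii $\la^{\frac{p-2}{2}}l$ and the time-lengths (either $l^2$ in the $p$-intrinsic case or $\tfrac{\la^p}{H(z,\la)}l^2$ in the $(p,q)$-case) of the two cylinders are comparable up to absolute constants. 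A point in $Q_z\cap Q_i$ combined with $l_z\lesssim l_{z_i}$ and the comparability of scaling factors then places $Q_z$ inside a dilate of $Q_i$; tracking constants carefully (using $V=9K\ge 9$) shows the dilation factor $V$ suffices. I would need to handle separately the four combinations of phase types for $z$ and $z_i$ — but in the $p$-intrinsic/$(p,q)$-intrinsic mismatch cases one uses $G_r^\la\subset Q_r^\la$ to reduce to the $p$-intrinsic comparison, and the comparability of $H(z,\la_z)$ and $H(z_i,\la_{z_i})$ (both equal to $\La$ by definition, up to the perturbation of $a$ controlled by Lemma~\ref{coeff_com}) handles the time direction.

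Finally I would argue countability: each selected $Q_i$ has positive measure and they are pairwise disjoint inside the bounded set $Q_{2\rho}(z_0)$, so the family is at most countable, giving $\{Q_i\}_{i\in\mathbb{N}}$. I expect the main obstacle to be the bookkeeping in the geometric inclusion step: one must confirm that under the worst-case distortion — $\la_z$ smaller than $\la_{z_i}$ by $2^{1/p}$ (which in the singular range $p<2$ makes the spatial ball $\la^{\frac{p-2}{2}}$-factor larger, not smaller) and $l_z$ up to twice a comparison radius — the enlargement by $V=9K$ still absorbs everything, including the mixed-phase time-interval comparison where $J_\rho^\la$ versus $I_\rho$ scaling must be reconciled via $\tfrac{\la^p}{H(z_0,\la)}\le 1$. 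The choice $V=9K$ with $K\ge 1$ (indeed $K$ large, from \eqref{K_def}) is precisely what gives the slack needed, and I would make this explicit rather than hiding it in an unspecified constant.
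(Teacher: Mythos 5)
Your proposal is correct and follows essentially the same route as the paper: the dyadic decomposition into generations $\mathcal{F}_j$ by stopping radius, the inductive maximal disjoint selection, the radius comparison $l_z\le 2l_\om$ from membership in the same generation, and the four-case phase analysis using Lemma~\ref{la_com} for the spatial inclusion and the bound $a(z)\la_z^q\le K^2\la_z^p$ (respectively $J^{\la}_r\subset I_r$) for the mixed-phase time inclusions, with $V=9K$ absorbing the resulting factors. The paper's proof is exactly this, so no further comment is needed.
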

\begin{proof}
    We denote the family of intrinsic cylinders having the Lebesgue point as the center by
    \[
    \mathcal{F}=\left\{  Q_z:z\in\Psi(\La,r_1) \right\}
    \]
    and for each $j\in\mathbb{N}$, consider its subfamily
    \[
    \mathcal{F}_j=\left\{  Q_z\in \mathcal{F}:  \frac{r_2-r_1}{16V 2^j}<l_z\le \frac{r_2-r_1}{16V2^{j-1}}  \right\}.
    \]
    Note that if for all $Q_z\in \mathcal{F}_j$, the quantity $\la_z$ is bounded below by $\la_0$ as well as bounded above uniformly since the radius is bounded below and Lemma~\ref{p_stopping} and Lemma~\ref{q_stopping} hold.

    We take $\mathcal{D}_1$ as a maximal disjoint collection of cylinders in $\mathcal{F}_1$. As the scaling factors $\la_{(\cdot)}$ and radius are uniformly bounded below and above by positive numbers, $\mathcal{D}_1$ is finite. Inductively, for chosen $\mathcal{D}_1,...,\mathcal{D}_j$, we select a maximal disjoint subset
    \[
    \mathcal{D}_{j+1}=\left\{  Q_z\in\mathcal{F}_{j+1}: Q_\om\cap Q_z\ne \emptyset\quad\text{for all}\quad Q_\om \in \cup_{1\le k\le j}  \mathcal{D}_k\right\}.
    \]
    Then since each $\mathcal{D}_j$ contains finite cylinders, we rearrange the subfamily
    \[
    \mathcal{D}=\bigcup_{j\in\mathbb{N}} \mathcal{D}_j,
    \]
    and denote it by $\{Q_i\}_{i\in \mathbb{N}}$. 
    
    In the remaining of the proof, we will show the following claim. For any $Q_z\in\mathcal{F}$, there exists $Q_\om\in \mathcal{D}$ such that
    \[
    Q_z\cap Q_\om\ne\emptyset\quad\text{and}\quad Q_z\subset VQ_\om.
    \]
    To start with, we note that $Q_z\in \mathcal{F}$ implies $Q_z\in\mathcal{F}_j$ for some $j\in\mathbb{N}$. Therefore, by the maximal disjoint property of $\mathcal{D}_j$, there exists $Q_\om\in \mathcal{D}_j$ such that
    \[
    Q_{z}\cap Q_\om\ne\emptyset.
    \]
    Moreover, by the construction of $\mathcal{F}_j$, there holds
    \begin{align}\label{rad_com}
        l_z\le 2l_\om.
    \end{align}
    As a result, we have
    \begin{align}\label{st_vitali}
        Q_{l_z}(z)=B_{l_z}(x)\times I_{l_z}(t)\subset Q_{5l_\om}(\om)=B_{5l_\om}(y)\times I_{5l_\om}(s),
    \end{align}
    where $(x,t)$ and $(y,s)$ are projections of $z$ and $\om$ respectively on the spatial direction and the time direction.
    To prove the inclusion part of the claim, we divide cases.

   \textit{Case $Q_z$ and $Q_\om$ are $p$-intrinsic}. 
   We observe
   \[
   Q_z=B_{l_z}^{\la_z}(x)\times I_{l_z}(t),\qquad Q_\om=B_{l_\om}^{\lao}(y)\times I_{l_z}(s).
   \]
   Thus the time inclusion directly follows from \eqref{st_vitali} as we have set $5\le V=9K$. On the other hand, to see the inclusion in the spatial direction, we apply Lemma~\ref{la_com} and \eqref{rad_com} to have
   \[
   \la_z^\frac{p-2}{2}l_z\le 2^{\frac{2-p}{2p}+1}\la_\om^\frac{p-2}{2} l_\om\le 2^2\la_\om^\frac{p-2}{2} l_\om.
   \]
   It follows that
   \[
   B_{l_z}^{\la_z}(x)\subset B_{9l_\om}^{\lao}(y)\subset B_{Vl_\om}^{\lao}(y)
   \]
   and therefore, the claim holds for this case.

   \textit{Case $Q_z$ is $p$-intrinsic and $Q_\om$ is $(p,q)$-intrinsic}.
   We have
   \[
   Q_z=B_{l_z}^{\la_z}(x)\times I_{l_z}(t),\qquad Q_\om=B_{l_\om}^{\lao}(y)\times J_{l_\om}^{\lao}(s)
   \]
   For the spatial direction, we follow the argument in the first case and obtain
   \[
   B_{l_z}^{\la_z}(x)\subset B_{Vl_\om}^{\lao}(y).
   \]
   Meanwhile, to obtain the time inclusion part, we employ $a(z)\la_z^q \le K^2\la_z^p$ and Lemma~\ref{la_com} to have
    \[
    l_z^2= \frac{\La}{\La}l_z^2\le \frac{2K^2\la_z^p}{\La}l_z^2\le 16K^2\frac{\lao^p}{\La}l_\om^2.
    \]
    Therefore, we obtain
    \[
    I_{l_z}(t)\subset J^{\lao}_{6Kl_\om}(s)\subset J^{\lao}_{Vl_\om}(s)
    \]
    and the claim is proved.

    \textit{Case $Q_z$ is $(p,q)$-intrinsic and $Q_\om$ is $p$-intrinsic}.
    Since we have
    \[
    Q_z=B^{\la_z}_{l_z}(x)\times J^{\la_z}_{l_z}(t),\qquad Q_{\om}=B^{\lao}_{l_\om}(y)\times I_{l_\om}(s),
    \]
    the inclusion in the spatial direction holds as the first case while since $J^{\la_z}_{l_z}(t)\subset I_{l_z}(t)$, the inclusion in time direction holds by \eqref{st_vitali}. This completes the proof for this case.

    \textit{Case $Q_z$ and $Q_\om$ are $(p,q)$-intrinsic}.
    In order to prove the inclusion for
    \[
    Q_z=B_{l_z}^{\la_z}(x)\times J^{\la_z}_{l_z}(t),\qquad Q_\om=B_{l_\om}^{\lao}(y)\times J^{\lao}_{l_\om}(s),
    \]
     we again enough to check the inclusion in the time direction as the inclusion in the spatial direction is the same as in the first case. Since Lemma~\ref{la_com} and \eqref{rad_com} give
     \[
     \frac{\la_z^p}{\La}l_z^2\le 8\frac{\lao^p}{\La}l_\om^2,
     \]
     we obtain
     \[
     J^{\la_z}_{l_z}(t)\subset J^{\lao}_{Vl_\om}(s).
     \]
     Hence, the proof is completed.
    
\end{proof}

\section{Proof of Theorem~\ref{main_theorem}}
In this section, we prove the main theorem. The following lemma will be used in the end of the proof. For the proof, see \cite[Lemma 8.3]{MR1962933}.
\begin{lemma}\label{iter_lem}
	Let $0<r<R<\infty$ and $h:[r,R]\longrightarrow\RR$ be a non-negative and bounded function. Suppose there exist $\vartheta\in(0,1)$, $A,B\ge0$ and $\gamma>0$ such that
	\[h(r_1)\le \vartheta h(r_2)+\frac{A}{(r_2-r_1)^\gamma}+B
		\quad\text{for all}\quad
		0<r\le r_1<r_2\le R.\]
	Then there exists a constant $c=c(\vartheta,\gamma)$ such that
	\[h(r)\le c\left(\frac{A}{(R-r)^\gamma}+B\right).\]
\end{lemma}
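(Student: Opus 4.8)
\textbf{Proof proposal for Lemma~\ref{iter_lem}.} This is the classical hole-filling iteration lemma of Giaquinta–Giusti type; the plan is to iterate the hypothesis along a geometric sequence of radii increasing from $r$ to $R$, choosing the ratio so that the accumulated error series converges. The boundedness of $h$ is used only to kill the remainder term in the limit.

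Concretely, fix a parameter $\tau\in(0,1)$ to be chosen at the end, and set $r_0=r$ and $r_{i+1}=r_i+(1-\tau)\tau^i(R-r)$, so that $r_i=r+(R-r)(1-\tau^i)$ is increasing with $r_i\to R$ and $r_{i+1}-r_i=(1-\tau)\tau^i(R-r)$. Since $r\le r_i<r_{i+1}\le R$, the hypothesis applies with $r_1=r_i$, $r_2=r_{i+1}$ and gives
\[
h(r_i)\le \vartheta\, h(r_{i+1})+\frac{A}{(1-\tau)^\gamma\tau^{i\gamma}(R-r)^\gamma}+B.
\]
Iterating this bound from $i=0$ to $i=k-1$ yields
\[
h(r)\le \vartheta^k h(r_k)+\sum_{i=0}^{k-1}\vartheta^i\left(\frac{A}{(1-\tau)^\gamma\tau^{i\gamma}(R-r)^\gamma}+B\right).
\]

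Now I would choose $\tau\in(\vartheta^{1/\gamma},1)$, which is possible precisely because $\vartheta\in(0,1)$; this ensures $q:=\vartheta\tau^{-\gamma}\in(0,1)$, so that $\sum_{i=0}^\infty q^i=(1-q)^{-1}$ and $\sum_{i=0}^\infty\vartheta^i=(1-\vartheta)^{-1}$ both converge. Letting $k\to\infty$ and using that $h$ is bounded so that $\vartheta^k h(r_k)\to0$, we obtain
\[
h(r)\le \frac{1}{(1-q)(1-\tau)^\gamma}\cdot\frac{A}{(R-r)^\gamma}+\frac{1}{1-\vartheta}\, B\le c\left(\frac{A}{(R-r)^\gamma}+B\right),
\]
where $c=c(\vartheta,\gamma)$ since $\tau$ was chosen depending only on $\vartheta$ and $\gamma$. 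There is no real obstacle here: the only point requiring care is the choice of $\tau$ guaranteeing summability of the geometric error series, and the harmless use of boundedness of $h$ to discard the tail term $\vartheta^k h(r_k)$; everything else is a direct computation. (For the detailed estimates one may also simply cite \cite[Lemma~8.3]{MR1962933}.)
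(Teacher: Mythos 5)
Your proof is correct and is precisely the standard iteration argument from the cited reference \cite[Lemma~8.3]{MR1962933}, which the paper itself does not reproduce: geometric interpolation of radii, iteration of the hypothesis, and the choice $\tau\in(\vartheta^{1/\gamma},1)$ to make $\vartheta\tau^{-\gamma}<1$ so the error series converges, with boundedness of $h$ discarding the tail term. No issues.
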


We recall that if $\epsilon$ is chosen, then $\delta$ and $K$ will be determined and finally $\rho_0$ will be selected as in Section~\ref{com_sec}.
\begin{proof}[Proof of Theorem~\ref{main_theorem}]
To begin with, we denote
\[
\kappa=\frac{1}{4(K^2+1)}.
\]
For each $\La$ satisfying \eqref{La}, we consider the pairwise disjoint set $\{Q_i\}_{i\in\mathbb{N}}$ from Lemma~\ref{Vitali} and denote each scaling factor of cylinder $Q_i$ as
\[
\la_i=\la_{z_i}.
\]
For each $i$, we will employ estimates in previous sections.
We divide cases according to its phase.

\textit{Case $Q_i$ is the $p$-intrinsic}.
We have from Lemma~\ref{p_stopping} that
\begin{align*}
    \begin{split}
    \la_i^p|Q_i|
    &=\iint_{Q_i\cap \Psi(\kappa\La,r_2)^c} H(z,|\na u|)\,dz+\iint_{Q_i\cap \Psi(\kappa\La,r_2)} H(z,|\na u|)\,dz\\
    &\qquad+\iint_{Q_i\cap \Psi(\kappa\delta\La,r_2)^c}\delta^{-1}H(z,|F|) \,dz+\iint_{Q_i\cap \Psi(\kappa\delta\La,r_2)}\delta^{-1}H(z,|F|) \,dz.
    \end{split}
\end{align*}
To proceed further, we note that $\La=\la_i^p+a(z_i)\la_i^q\le (K^2+1)\la_i^p$ and thus
\[
\iint_{Q_i\cap \Psi(\kappa\La,r_2)^c} H(z,|\na u|)\,dz\le \iint_{Q_i\cap \Psi(\kappa\La,r_2)^c} \kappa\La\,dz\le \frac{1}{4}\la_i^p|Q_i|.
\]
Similarly we also have
\[
\iint_{Q_i\cap \Psi(\kappa\delta\La,r_2)^c} H(z,|F|)\,dz\le \frac{1}{4}\la_i^p|Q_i|.
\]
Therefore we deduce from the stopping time argument that
\begin{align}\label{p_cz_decom}
    |Q_i|
    \le\frac{2}{\la_i^p}\iint_{Q_i\cap \Psi(\kappa\La,r_2)} H(z,|\na u|)\,dz+\frac{2}{\la_i^p}\iint_{Q_i\cap \Psi(\kappa\delta\La,r_2)}\delta^{-1}H(z,|F|) \,dz.
\end{align}

On the other hand, by Lemma~\ref{p_lip} and Corollary~\ref{p_com}, there exists a map $v_i\in L^\infty(VI_i; W^{1,\infty}(VB_i,\mathbb{R}^N))$ such that
\[
\iint_{VQ_i}H(z,|\na u-\na v_i|)\,dz\le \epsilon\la_i^p|Q_i|,\qquad \| \na v_i \|_{L^\infty(VQ_i)}\le \left(  \frac{S_\delta}{2^{q+3}}  \right)^\frac{1}{q}\la_i,
\]
where $B_i$ and $I_i$ are projections of $Q_i$ on the spatial direction and the time directions respectively and $S_\delta=S(\data,\delta)>2^{q+3}$ is a constant. Since $[a]_\alpha (Vl_i)^\alpha \la_i^q\le \la_i^p$ where $l_i$ is the radius of $Q_i$, we obtain that for a.e. $z\in VQ_i$,
\[
H(z,|\na v_i|)\le H(z_i,|\na v_i|)+[a]_\alpha(Vl_i)^\alpha \le \frac{S_\delta}{2^{q+2}}\La.
\]
Furthermore, the following estimate can be derived from the above display.
\begin{align}\label{Bogelein_trick}
    H(z,|\na v_i(z)|)\le H(z,|\na u(z)-\na v_i(z)|)\quad\text{for a.e.}\quad z\in VQ_i\cap \Psi(S_\delta\La,r_1).
\end{align}
Indeed, if \eqref{Bogelein_trick} is false, then there exists a point $\om$ in the reference set that $H(\om,|\na v_i(\om)|)> H(\om,|\na u(\om)-\na v_i(\om)|)$ and this leads
\begin{align*}
    \begin{split}
        H(\om,|\na v_i(\om)|)
        &\le \frac{S_\delta}{2^{q+2}}\La\\
        &\le \frac{1}{2^{q+2}}H(\om,|\na u(\om)|)\\
        &\le \frac{2^q}{2^{q+2}}(H(\om,|\na u(\om)-\na v_i(\om)|)+H(\om,|\na v_i(\om)|) )\\
        &\le \frac{1}{2}H(\om,|\na v_i(\om)|).
    \end{split}
\end{align*}
As the above inequality means
\[
0=H(\om,|\na v_i(\om)|)> H(\om,|\na u(\om)-\na v_i(\om)|)=H(\om,|\na u(\om)|)>S_\delta\La,
\]
we get the contradiction and \eqref{Bogelein_trick} holds true. It follows that
\begin{align*}
    \begin{split}
        &\iint_{VQ_i\cap \Psi(S_\delta \La,r_1)} H(z,|\na u|)\,dz\\
        &\le 2^q\iint_{VQ_i\cap \Psi(S_\delta \La,r_1)} (H(z,|\na u-\na v_i|) +H(z,|\na v_i|) )\,dz\\
        &\le 2^{q+1}\iint_{VQ_i\cap \Psi(S_\delta \La,r_1)} (H(z,|\na u-\na v_i|)\,dz\\
        &\le 2^{q+1}\epsilon\la_i^p|Q_i|.
    \end{split}
\end{align*}
    Inserting the above \eqref{p_cz_decom} to the right hand side of the above inequality, we obtain
    \begin{align}\label{main_proof}
        \begin{split}
            \iint_{VQ_i\cap \Psi(S_\delta \La,r_1)} H(z,|\na u|)\,dz
            &\le 2^{q+2}\epsilon \iint_{Q_i\cap \Psi(\kappa\La,r_2)} H(z,|\na u|)\,dz\\
            &\qquad+2^{q+2}\iint_{Q_i\cap \Psi(\kappa\delta\La,r_2)}\delta^{-1}H(z,|F|) \,dz.
        \end{split}
    \end{align}

\textit{Case $Q_i$ is the $(p,q)$-intrinsic}. The argument to obtain \eqref{main_proof} is analogous to the previous case as it is enough to replace used lemmas in $p$-intrinsic case by corresponding lemmas in $(p,q)$-intrinsic case instead. We omit the details.

As for each $i\in\mathbb{N}$, \eqref{main_proof} holds, we use the pairwise disjointedness of $Q_i$ to have
\begin{align*}
    \begin{split}
        \iint_{\Psi(S_\delta \La,r_1)} H(z,|\na u|)\,dz
        &\le \sum_{i\in\mathbb{N}}\iint_{VQ_i\cap \Psi(S_\delta \La,r_1)} H(z,|\na u|)\,dz\\
            &\le 2^{q+2}\epsilon \iint_{\Psi(\kappa\La,r_2)} H(z,|\na u|)\,dz\\
            &\qquad+2^{q+2}\iint_{\Psi(\kappa\delta\La,r_2)}\delta^{-1}H(z,|F|) \,dz.
    \end{split}
\end{align*}
Following the standard Fubini argument in \cite{K2024}, we have
\begin{align*}
    \begin{split}
        &\iint_{Q_{r_1}(z_0)} H(z,|\na u|)(H(z,|\na u|)_k)^{\sigma-1}\,dz\\
         &\le 2^{q+2}\epsilon \iint_{Q_{r_2}(z_0)} H(z,|\na u|)(H(z,|\na u|)_k)^{\sigma-1}\,dz\\
            &\qquad+2\left( \frac{32V\rho}{r_2-r_1}  \right)^{\frac{2q(n+2)(\sigma-1)}{p(n+2)-2n}}(S_\delta\La_0)^{\sigma-1}\iint_{Q_{2\rho}(z_0)}   H(z,|\na u|)   \,dz\\
            &\qquad+2^{q+2}\iint_{Q_{2\rho}(z_0)}\delta^{-1}H(z,|F|) \,dz,
    \end{split}
\end{align*}
where we denoted
\[
H(z,|\na u(z)|)_k=\min\{ H(z,|\na u(z)|),k \}
\]
for some $k>0$. By taking
\begin{align}\label{def_ep}
    \epsilon=\frac{1}{2^{q+3}},
\end{align}
and applying Lemma~\ref{iter_lem}, we obtain
\begin{align*}
    \begin{split}
        &\iint_{Q_{\rho}(z_0)} H(z,|\na u|)(H(z,|\na u|)_k)^{\sigma-1}\,dz\\
         &\le c\La_0^{\sigma-1}\iint_{Q_{2\rho}(z_0)} H(z,|\na u|)\,dz+c\iint_{Q_{2\rho}(z_0)}H(z,|F|) \,dz,
    \end{split}
\end{align*}
    where $c=c(\data,\sigma)$. The conclusion follows by letting $k$ to infinity and substituting $\La_0$ into the above inequality.
\end{proof}

\textbf{Acknowledgement}

W. Kim was supported by the Wallenberg AI, Autonomous Systems and Software Program (WASP) funded by the Knut and Alice Wallenberg Foundation.

%\section*{References}


\begin{thebibliography}{10}

\bibitem{MR2286632}
E.~Acerbi and G.~Mingione.
\newblock Gradient estimates for a class of parabolic systems.
\newblock {\em Duke Math. J.}, 136(2):285--320, 2007.

\bibitem{baasandorj2023self}
S.~Baasandorj, S-S. Byun, and W.~Kim.
\newblock Self-improving properties of very weak solutions to double phase
  systems.
\newblock {\em Trans. Am. Math. Soc.},
  376(12):8733--8768, 2023.

\bibitem{MR3348922}
P.~Baroni, M.~Colombo, and G.~Mingione.
\newblock Harnack inequalities for double phase functionals.
\newblock {\em Nonlinear Anal.}, 121:206--222, 2015.

\bibitem{BARONI2017593}
P.~Baroni and C.~Lindfors.
\newblock The {C}auchy–{D}irichlet problem for a general class of parabolic
  equations.
\newblock {\em Ann. Inst. H. Poincaré Anal. Non Linéaire}, 34(3):593--624,
  2017.

\bibitem{MR3073153}
V.~B\"{o}gelein, F.~Duzaar, and P.~Marcellini.
\newblock Parabolic systems with {$p,q$}-growth: a variational approach.
\newblock {\em Arch. Ration. Mech. Anal.}, 210(1):219--267, 2013.

\bibitem{MR3035434}
S.~Byun, J.~Ok, and S.~Ryu.
\newblock Global gradient estimates for general nonlinear parabolic equations
  in nonsmooth domains.
\newblock {\em J. Differential Equations}, 254(11):4290--4326, 2013.

\bibitem{MR2187159}
S.~Byun and L.~Wang.
\newblock Parabolic equations in {R}eifenberg domains.
\newblock {\em Arch. Ration. Mech. Anal.}, 176(2):271--301, 2005.

\bibitem{MR2345911}
S.~Byun, L.~Wang, and S.~Zhou.
\newblock Nonlinear elliptic equations with {BMO} coefficients in {R}eifenberg
  domains.
\newblock {\em J. Funct. Anal.}, 250(1):167--196, 2007.

\bibitem{MR1486629}
L.~A. Caffarelli and I.~Peral.
\newblock On {$W^{1,p}$} estimates for elliptic equations in divergence form.
\newblock {\em Comm. Pure Appl. Math.}, 51(1):1--21, 1998.

\bibitem{MR3360738}
M.~Colombo and G.~Mingione.
\newblock Bounded minimisers of double phase variational integrals.
\newblock {\em Arch. Ration. Mech. Anal.}, 218(1):219--273, 2015.

\bibitem{MR3294408}
M.~Colombo and G.~Mingione.
\newblock Regularity for double phase variational problems.
\newblock {\em Arch. Ration. Mech. Anal.}, 215(2):443--496, 2015.

\bibitem{MR3447716}
M.~Colombo and G.~Mingione.
\newblock Calder\'{o}n-{Z}ygmund estimates and non-uniformly elliptic
  operators.
\newblock {\em J. Funct. Anal.}, 270(4):1416--1478, 2016.

\bibitem{cupini2024regularity}
G.~Cupini, P.~Marcellini, and E.~Mascolo.
\newblock Regularity for nonuniformly elliptic equations with p, q-growth and
  explicit x, u-dependence.
\newblock {\em Arch. Ration. Mech. Anal.}, 248(4):60, 2024.

\bibitem{MR3985927}
C.~De~Filippis and G.~Mingione.
\newblock A borderline case of {C}alder\'{o}n-{Z}ygmund estimates for
  nonuniformly elliptic problems.
\newblock {\em St. Petersburg Math. J.}, 31(3):455--477, 2020.

\bibitem{de2023regularity}
C.~De~Filippis and G.~Mingione.
\newblock Regularity for double phase problems at nearly linear growth.
\newblock {\em Arch. Ration. Mech. Anal.}, 247(5):85, 2023.

\bibitem{de2024regularity}
F.~{De Filippis} and M.~Piccinini.
\newblock Regularity for multi-phase problems at nearly linear growth.
\newblock {\em J. Differential Equations}, 410:832--868, 2024.

\bibitem{MR1230384}
E.~DiBenedetto.
\newblock {\em Degenerate parabolic equations}.
\newblock Universitext. Springer-Verlag, New York, 1993.

\bibitem{MR1246185}
E.~DiBenedetto and J.~Manfredi.
\newblock On the higher integrability of the gradient of weak solutions of
  certain degenerate elliptic systems.
\newblock {\em Amer. J. Math.}, 115(5):1107--1134, 1993.

\bibitem{MR2076158}
L.~Esposito, F.~Leonetti, and G.~Mingione.
\newblock Sharp regularity for functionals with {$(p,q)$} growth.
\newblock {\em J. Differential Equations}, 204(1):5--55, 2004.

\bibitem{MR2058167}
I.~Fonseca, J.~Mal\'{y}, and G.~Mingione.
\newblock Scalar minimizers with fractal singular sets.
\newblock {\em Arch. Ration. Mech. Anal.}, 172(2):295--307, 2004.

\bibitem{MR1962933}
E.~Giusti.
\newblock {\em Direct methods in the calculus of variations}.
\newblock World Scientific Publishing Co., Inc., River Edge, NJ, 2003.

\bibitem{MR722254}
T.~Iwaniec.
\newblock Projections onto gradient fields and {$L^{p}$}-estimates for
  degenerated elliptic operators.
\newblock {\em Studia Math.}, 75(3):293--312, 1983.

\bibitem{K2024}
W.~Kim.
\newblock Calder\'on-zygmund type estimate for the parabolic double-phase
  system.
\newblock {\em to appear in Ann. Sc. Norm. Super. Pisa Cl. Sci.}, 2025.

\bibitem{KKM}
W.~Kim, J.~Kinnunen, and K.~Moring.
\newblock Gradient higher integrability for degenerate parabolic double-phase
  systems.
\newblock {\em Arch. Ration. Mech. Anal.}, 247(5):Paper No. 79, 46, 2023.

\bibitem{KKS}
W.~Kim, J.~Kinnunen, and L.~Särkiö.
\newblock Lipschitz truncation method for parabolic double-phase systems and
  applications.
\newblock {\em J. Funct. Anal.}, 288(3):110738, 2025.

\bibitem{kim2024holder}
W.~Kim, K.~Moring, and L.~Särkiö.
\newblock H\"older regularity for degenerate parabolic double-phase equations.
\newblock {\em arXiv}, 2024.

\bibitem{MR4718687}
W.~Kim and L.~S\"arki\"o.
\newblock Gradient higher integrability for singular parabolic double-phase
  systems.
\newblock {\em NoDEA Nonlinear Differential Equations Appl.}, 31(3):Paper No.
  40, 38, 2024.

\bibitem{MR1749438}
J.~Kinnunen and J.~L. Lewis.
\newblock Higher integrability for parabolic systems of {$p$}-{L}aplacian type.
\newblock {\em Duke Math. J.}, 102(2):253--271, 2000.

\bibitem{MR1720770}
J.~Kinnunen and S.~Zhou.
\newblock A local estimate for nonlinear equations with discontinuous
  coefficients.
\newblock {\em Comm. Partial Differential Equations}, 24(11-12):2043--2068,
  1999.

\bibitem{MR969900}
P.~Marcellini.
\newblock Regularity of minimizers of integrals of the calculus of variations
  with nonstandard growth conditions.
\newblock {\em Arch. Rational Mech. Anal.}, 105(3):267--284, 1989.

\bibitem{MR1094446}
P.~Marcellini.
\newblock Regularity and existence of solutions of elliptic equations with
  {$p,q$}-growth conditions.
\newblock {\em J. Differential Equations}, 90(1):1--30, 1991.

\bibitem{marcellini2023local}
P.~Marcellini.
\newblock Local lipschitz continuity for {$p,q$}- pdes with explicit $u$-
  dependence.
\newblock {\em Nonlinear Analysis}, 226:113066, 2023.

\bibitem{MR3356846}
T.~Singer.
\newblock Parabolic equations with {$p,q$}-growth: the subquadratic case.
\newblock {\em Q. J. Math.}, 66(2):707--742, 2015.

\bibitem{MR3532237}
T.~Singer.
\newblock Existence of weak solutions of parabolic systems with {$p,
  q$}-growth.
\newblock {\em Manuscripta Math.}, 151(1-2):87--112, 2016.

\end{thebibliography}
\end{document}